\tikzset{>=stealth',
  head/.style = {fill = white, text=black},
  plaque/.style = {draw, rectangle, minimum size = 9mm, fill=Gainsboro}, 
   Kplaque/.style = {draw, rectangle, minimum size = 9mm, fill=white}, 
  newplaque/.style = {draw=red, ellipse, minimum size = 9mm,ultra thick, fill=white}, 
  posex/.style={->,thick},
  lift/.style={right hook->},
  beta/.style={dashed, <->},
  pil/.style={->,thick},
  junct/.style = {draw,circle,inner sep=0.5pt,outer sep=0pt, fill=black}
  }
\newtheorem{theorem}{Theorem}[section]
\newtheorem{lemma}[theorem]{Lemma}
\newtheorem{proposition}[theorem]{Proposition}
\newtheorem{corollary}[theorem]{Corollary}
\newtheorem{conjecture}[theorem]{Conjecture}
\theoremstyle{definition}
\newtheorem{definition}[theorem]{Definition}
\newenvironment{example}
  {\pushQED{\qed}\examplex}
  {\popQED\endexamplex}
\theoremstyle{remark}
\newtheorem{remark}[theorem]{Remark}
\numberwithin{equation}{section}
\DeclareMathOperator{\ex}{ex}
\newcommand{\wt}{\ensuremath{\mathrm{wt}}}
\newcommand{\kwt}{\ensuremath{\mathrm{kwt}}}
\newcommand{\destand}{\ensuremath{\mathrm{dst}}}
\newcommand{\x}{\ensuremath{\mathbf{x}}}
\newcommand{\flags}{\ensuremath{\mathsf{Flags}}}
\newcommand{\Sym}{\ensuremath{\mathrm{Sym}}}
\newcommand{\QSym}{\ensuremath{\mathrm{QSym}}}
\newcommand{\Poly}{\ensuremath{\mathrm{Poly}}}
\newcommand{\schub}{\ensuremath{\mathfrak{S}}}
\newcommand{\groth}{\ensuremath{\overline{\mathfrak{S}}}}
\newcommand{\slide}{\ensuremath{\mathfrak{F}}}
\newcommand{\glide}{\ensuremath{\overline{\mathfrak{F}}}}
\newcommand{\atom}{\ensuremath{\mathfrak{A}}}
\newcommand{\lascouxatom}{\ensuremath{\overline{\mathfrak{A}}}}
\newcommand{\kaon}{\ensuremath{\overline{\mathfrak{P}}}}
\newcommand{\particle}{\ensuremath{\mathfrak{P}}}
\newcommand{\fundamental}{\ensuremath{F}}
\newcommand{\multifundamental}{\ensuremath{\overline{F}}}
\newcommand{\key}{\ensuremath{\mathfrak{D}}}
\newcommand{\qkey}{\ensuremath{\mathfrak{Q}}}
\newcommand{\lascoux}{\ensuremath{\overline{\mathfrak{D}}}}
\newcommand{\qlascoux}{\ensuremath{\overline{\mathfrak{Q}}}}
\newcommand{\schur}{\ensuremath{s}}
\newcommand{\sgroth}{\ensuremath{\overline{s}}}
\newcommand{\qschur}{\ensuremath{S}}
\newcommand{\qgroth}{\ensuremath{\overline{S}}}
\newcommand{\SSF}{\ensuremath{\mathsf{SSF}}}
\newcommand{\ASSF}{\atom\SSF}
\newcommand{\qlascouxSSF}{\qlascoux\SSF}
\newcommand{\LASSF}{\lascouxatom\SSF}
\newcommand{\KSSF}{\key\SSF}
\newcommand{\LPSSF}{\lascoux\SSF}
\newcommand{\str}{\ensuremath{\mathsf{str}}}
\newcommand{\sort}{\ensuremath{\mathtt{sort}}}
\newcommand{\lswap}{\ensuremath{\mathtt{lswap}}}
\newcommand{\Qlswap}{\ensuremath{\mathtt{Qlswap}}}
\newcommand{\latok}{\lascouxatom \ensuremath{2} \kaon}
\newcommand{\qltog}{\qlascoux \ensuremath{2} \glide}
\newcommand{\blank}{\phantom{2}}
\newcommand{\bbb}{\mathsf{b}}
\newcommand{\red}[1]{\textcolor{red}{#1}}
\newcommand{\excise}[1]{}
\newcommand{\qcolor}[1]{\textcolor{violet}{#1}}
\newcommand{\scolor}[1]{\textcolor{orange}{#1}}
\newcommand{\pcolor}[1]{\textcolor{Green}{#1}}
\newlength\cellsize \setlength\cellsize{13\unitlength}
\newcommand\cellify[1]{\def\thearg{#1}\def\nothing{}%
\ifx\thearg\nothing\vrule width0pt height\cellsize depth0pt%
  \else\hbox to 0pt{\usebox2\hss}\fi%
  \vbox to 12\unitlength{\vss\hbox to 12\unitlength{\hss$#1$\hss}\vss}}
\newcommand\tableau[1]{\vtop{\let\\=\cr
\setlength\baselineskip{-12000pt}
\setlength\lineskiplimit{12000pt}
\setlength\lineskip{0pt}
\halign{&\cellify{##}\cr#1\crcr}}}
\newcommand\gridify[1]{\vbox to 10\unitlength{\vss\hbox to 10\unitlength{\hss$_{#1}$\hss}\vss}}
\newcommand\pipes[1]{\vtop{\let\\=\cr
\setlength\baselineskip{-10000pt}
\setlength\lineskiplimit{10000pt}
\setlength\lineskip{0pt}
\halign{&\gridify{##}\cr#1\crcr}}}
\begin{document}


\title{Polynomials from combinatorial $K$-theory}  

\author[C. Monical]{Cara Monical}
\address[CM]{Department of Mathematics, University of Illinois at Urbana-Champaign, Urbana, IL 61801, USA}
\email{cmonica2@illinois.edu}

\author[O. Pechenik]{Oliver Pechenik}
\address[OP]{Department of Mathematics, University of Michigan, Ann Arbor, MI 48109, USA}
\email{pechenik@umich.edu}

\author[D. Searles]{Dominic Searles}
\address[DS]{Department of Mathematics and Statistics, University of Otago, Dunedin 9016, New Zealand}
\email{dominic.searles@otago.ac.nz}

\subjclass[2010]{Primary 05E05}

\date{June 11, 2018}


\keywords{Demazure character, Demazure atom, Lascoux polynomial, Lascoux atom, Grothendieck polynomial, quasiLascoux polynomial, kaon}

\begin{abstract}
We introduce two new bases of the ring of polynomials and study their relations to known bases.  The first basis is the \emph{quasiLascoux} basis, which is simultaneously both a $K$-theoretic deformation of the quasikey basis and also a lift of the $K$-analogue of the quasiSchur basis from quasisymmetric polynomials to general polynomials. We give positive expansions of this quasiLascoux basis into the glide and Lascoux atom bases, as well as a positive expansion of the Lascoux basis into the quasiLascoux basis. As a special case, these expansions give the first proof that the $K$-analogues of quasiSchur polynomials expand positively in multifundamental quasisymmetric polynomials of T.~Lam and P.~Pylyavskyy.

The second new basis is the \emph{kaon} basis, a $K$-theoretic deformation of the fundamental particle basis.  We give positive expansions of the glide and Lascoux atom bases into this kaon basis.

Throughout, we explore how the relationships among these $K$-analogues mirror the relationships among their cohomological counterparts. We make several `alternating sum' conjectures that are suggestive of Euler characteristic calculations.
\end{abstract}

\maketitle

%
\section{Introduction}
%
\label{sec:introduction}

Let $\Poly_n \coloneqq \mathbb{Z}[x_1, \dots, x_n]$ denote the ring of integral polynomials in $n$ commuting variables. Considerations in representation theory and algebraic geometry give rise to a number of interesting and important bases of $\Poly_n$. This paper contributes two new bases and studies their relations to those bases of established importance; we find that our new bases exhibit well-behaved structure and fill natural holes in the previously developed theory. This study is part of a general program to develop a combinatorial theory of $\Poly_n$ that mirrors the rich classical theory of symmetric functions.

Foremost among known bases of $\Poly_n$ are the celebrated \emph{Schubert polynomials} $\{ \schub_a \}$ of A.~Lascoux and M.-P.~Sch\"{u}tzenberger \cite{Lascoux.Schutzenberger}.
Let $X = \flags_n(\mathbb{C} )$ be the parameter space of complete flags 
\[
0 = V_0 \subset V_1 \subset \dots \subset V_n = \mathbb{C}^n
\]
of nested vector subspaces of $\mathbb{C}^n$, where $\dim V_i = i$. Denote by $\mathsf{B}$ the Borel group of $n \times n$ invertible upper triangular matrices. The standard action of $\mathsf{B}$ on $\mathbb{C}^n$ induces an action on $X$ with finitely-many orbits, whose closures are the \emph{Schubert varieties} of $X$. These subvarieties may be naturally indexed by weak compositions $a= (a_1,\ldots,a_n)$ (i.e., sequences of nonnegative integers) of length $n$ such that $a_i \leq n-i$. The corresponding \emph{Schubert classes} $\{ \sigma_a \}$ form a $\mathbb{Z}$-linear basis for the Chow ring $A^\star(X)$ of subvarieties of $X$ modulo rational equivalence. The Schubert polynomials are polynomial representatives for the Schubert classes in the sense that one has (up to truncation)
\[\schub_a \cdot \schub_b = \sum_c C_{a,b}^c \; \schub_c \qquad \mbox{ if and only if } \qquad \sigma_a \cdot \sigma_b = \sum_c C_{a,b}^c \; \sigma_c.\]

Despite the existence of explicit formulas for Schubert polynomials, it remains a major open problem of algebraic combinatorics to give a positive combinatorial formula for the \emph{Schubert structure constants} $C_{a,b}^c \in \mathbb{Z}_{\geq 0}$.

The (type A) \emph{Demazure characters} $\{ \key_a \}$ of M.~Demazure \cite{Demazure} form another basis of $\Poly_n$, important in representation theory. These are precisely the characters of certain explicitly-defined ${\sf B}$-modules \cite{Demazure, Reiner.Shimozono}. Remarkably, it was shown in \cite{Lascoux.Schutzenberger:key,Reiner.Shimozono} that the Demazure characters \emph{refine} the Schubert polynomials, i.e.,
\[
\schub_a = \sum_b E^a_b \key_b
\]
for some \emph{nonnegative} integers $E^a_b \in \mathbb{Z}_{\geq 0}$.

Letting the symmetric group $S_n$ act on $\Poly_n$ by permuting variables, the $S_n$-invariants are the \emph{symmetric polynomials} $\Sym_n \subset \Poly_n$. Another remarkable property of the bases $\{\schub_a \}$ and $\{ \key_a \}$ of $\Poly_n$ is that each contains (as a subset) the celebrated \emph{Schur basis} $\{ \schur_\lambda \}$ of $\Sym_n$; in fact
\[\{\schub_a \} \cap \Sym_n = \{ \key_a \} \cap \Sym_n = \{ \schur_\lambda \}.\] 
In this sense, both Schubert polynomials and Demazure characters are \emph{lifts} of the Schur basis to the polynomial ring. The Schur basis, moreover, has well-studied and useful refinements into the \emph{quasiSchur polynomials} $\{ \qschur_\alpha \}$ of \cite{HLMvW11:QS, HLMvW11:LRrule} and further into the \emph{fundamental quasisymmetric polynomials} $\{ \fundamental_\alpha \}$ of \cite{Gessel}, both of which are bases of the subspace $\QSym_n \subset \Poly_n$ of quasisymmetric polynomials. (A polynomial $f \in \Poly_n$ is \emph{quasisymmetric} if it is invariant under exchanging $x_i$ and $x_{i+1}$ in those terms of $f$ where both variables do not appear.) 

In general, while a rich combinatorial theory of symmetric and quasisymmetric polynomials has been and continues to be developed, the analogous theory for the full polynomial ring remains relatively sparse. For example, unlike for general Schubert polynomials, several positive combinatorial formulas  (e.g., \cite{Littlewood.Richardson,Knutson.Tao.Woodward, Vakil}) are known for the structure constants of Schur polynomials, i.e., the Littlewood-Richardson coefficients. A natural program, championed by A.~Lascoux \cite{Lascoux:polynomials}, is to develop the analogous combinatorial theory of $\Poly_n$ by
\begin{itemize}
\item lifting known bases and relationships to $\Poly_n$ from the better-understood subrings $\Sym_n$ and $\QSym_n$, and by 
\item developing uniform combinatorial models for these lifted bases and for the relations among them, extending models from $\Sym_n$ and $\QSym_n$. 
\end{itemize} 
The end goal of this program is that this new theory eventually bear dividends on major problems involving polynomials, such as the Schubert problem mentioned above.

Recent work in this area has provided lifts to $\Poly_n$ of the quasiSchur and fundamental bases of $\QSym_n$: respectively, the \emph{quasikey polynomials} $\{ \qkey_a \}$ of \cite{Assaf.Searles:2} and the \emph{(fundamental) slide polynomials} $\{ \slide_a \}$ of \cite{Assaf.Searles}. These families provide further refinements of Schubert polynomials: each Demazure character is a nonnegative combination of quasikeys \cite[Theorem~3.7]{Assaf.Searles:2}, each of which is, in turn, a nonnegative combination of slides \cite[Theorem~3.4]{Assaf.Searles:2}. The slide basis, moreover, like the Schubert basis, has nonnegative structure constants; in fact, unlike the Schubert basis, one even has an analogue of the Littlewood-Richardson rule for multiplying slide polynomials \cite[Theorem~5.11]{Assaf.Searles}.

A more classical approach to the study of Demazure characters is to consider their refinement, not into slides, but rather into the \emph{Demazure atom} basis $\{ \atom_a \}$ of \cite{Lascoux.Schutzenberger:key} (see also, \cite{Mason:atom}). While the Demazure atoms refine the quasikeys \cite[Theorem~3.4]{Searles}, just as slides do, the Demazure atoms have no known direct relation to the slide basis. A common refinement of the Demazure atoms and the slides is provided by the \emph{(fundamental) particle} basis $\{ \particle_a \}$ of \cite{Searles}. The relations among these nine families of polynomials, which we call `cohomological', are illustrated in Figure~\ref{fig:H_bases}.

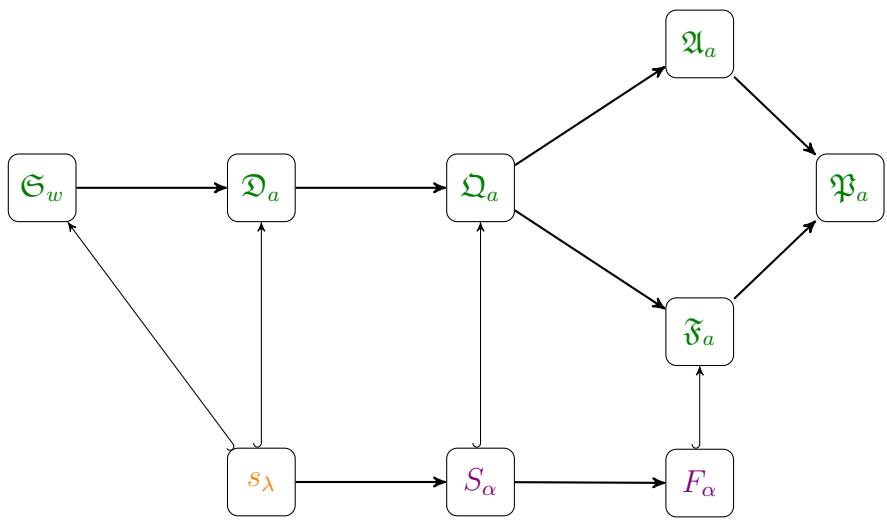
\begin{figure}[h]
\begin{tikzpicture}
\node[Kplaque, rounded corners] (Schubert) {\pcolor{$\schub_w$}};
\node[right=2 of Schubert,Kplaque, rounded corners] (key) {\pcolor{$\key_a$}};
\node[right=2 of key,Kplaque, rounded corners] (qkey) {\pcolor{$\qkey_a$}};
\node[above right=1 and 2 of qkey,Kplaque, rounded corners] (atom) {\pcolor{$\atom_a$}};
\node[below right=1 and 2 of qkey,Kplaque, rounded corners] (slide) {\pcolor{$\slide_a$}};
\node[right=4 of qkey,Kplaque, rounded corners] (pion) {\pcolor{$\mathfrak{P}_a$}};
\node[below=3 of key,Kplaque, rounded corners] (Schur) {\scolor{$\schur_\lambda$}};
\node[below=3 of qkey,Kplaque, rounded corners] (qSchur) {\qcolor{$\qschur_\alpha$}};
\node[below=1.1 of slide,Kplaque, rounded corners] (fundamental) {\qcolor{$F_\alpha$}};
  \begin{scope}[nodes = {draw = none}]
    \path (Schubert) edge[posex]   (key)
    (key) edge[posex]  (qkey)
    (qkey) edge[posex] (atom)
    (qkey) edge[posex] (slide)
    (atom) edge[posex] (pion)
    (slide) edge[posex] (pion)
    (Schur) edge[lift] (Schubert)
    (Schur) edge[lift] (key)
    (qSchur) edge[lift] (qkey)
    (fundamental) edge[lift] (slide)
    (Schur) edge[posex] (qSchur)
    (qSchur) edge[posex] (fundamental)
      ;
  \end{scope}
\end{tikzpicture}
\caption{The nine cohomological families of polynomials considered here. Those depicted in \scolor{orange} are bases of $\Sym_n$, those in \qcolor{purple} are bases of $\QSym_n$, and those in \pcolor{green} are bases of $\Poly_n$. The thinner hooked arrows denote that the basis at the tail is a subset of the basis at the head. The thicker arrows denote that the basis at the head refines the basis at the tail.}
\label{fig:H_bases}
\end{figure}

In this work, we are interested in the \emph{$K$-theoretic analogues} of the bases in Figure~\ref{fig:H_bases}. A major theme of the modern Schubert calculus is the investigation of the geometry of $X = \flags_n$ (and other generalized flag varieties) via richer complex oriented cohomology theories. In the most general such theories, there is ambiguity in the appropriate definition of Schubert classes, as the analogues of the usual push-pull operators fail to satisfy the appropriate braid relations. (For further discussion and partial progress on these problems, see, e.g., \cite{Ganter.Ram,Calmes.Zainoulline.Zhong,Lenart.Zainoulline}.) 

It turns out that this definitional problem is avoided precisely by working in the \emph{connective $K$-theory} (or a specialization thereof) of $X$ \cite{Bressler.Evens}; hence, we restrict ourselves to this context. Complex oriented cohomology theories are determined by their formal group laws, describing how to express the Chern class of a tensor product of two line bundles in terms of the original two Chern classes. For connective $K$-theory, this formal group law is
\begin{equation}\label{eq:fgl}
c_1(L \otimes M) = c_1(L) + c_1(M) + \beta c_1(L) c_1(M),
\end{equation}
where $\beta$ is a formal parameter and $L,M$ are any complex line bundles on $X$. Hence the ordinary cohomology ring is recovered by specializing $\beta = 0$, and the ordinary $K$-theory ring is recovered by specializing $\beta$ to any element of $\mathbb{C}^\star$.

In the connective $K$-theory of $X$, polynomial representatives for a Schubert basis are given by the $\beta$-Grothendieck polynomials $\{ \groth_a \}$ of S.~Fomin and A.~Kirillov \cite{Fomin.Kirillov} (see, \cite{Hudson}). These polynomials form an inhomogeneous basis of $\Poly_n[\beta]$, where $\beta$ is the formal parameter from Equation~(\ref{eq:fgl}). Specializing at $\beta =0$, one recovers the Schubert basis $\{ \schub_a \}$ of $\Poly_n$.  The usual Grothendieck polynomials of A.~Lascoux and M.-P.~Sch\"{u}tzenberger \cite{Lascoux.Schutzenberger} are realized at $\beta = -1$. (To help the reader keep track of the relations between bases, we deviate from established practice by denoting the connective $K$-analogue of each basis of Figure~\ref{fig:H_bases} by merely attaching an `overbar' to the notation for that basis.) 

Intersecting $\{ \groth_a \}$ with $\Sym_n[\beta]$ yields the basis $\{ \sgroth_\lambda \}$ of \emph{symmetric Grothendieck polynomials}. These represent connective $K$-theory Schubert classes on Grassmannians. A number of Littlewood-Richardson rules for $\{ \sgroth_\lambda \}$ are now known (e.g., \cite{Vakil,Thomas.Yong:K,Pechenik.Yong:genomic}), following the first found by A.~Buch \cite{Buch}. Like Schur polynomials, symmetric Grothendieck polynomials have quasisymmetric refinements; each $\sgroth_\lambda$ expands positively in the \emph{quasiGrothendieck} basis $\{ \qgroth_\alpha \}$ of $\QSym_n[\beta]$. This basis, introduced in \cite{Monical}, is the connective $K$-analogue of the quasiSchur basis of $\QSym_n$.

Our first new result is that the quasiGrothendieck basis refines further into the basis of multifundamental quasisymmetric polynomials $\{ \multifundamental_\alpha \}$ of \cite{Lam.Pylyavskyy,Pechenik.Searles}, the connective $K$-analogue of Gessel's fundamental basis of $\QSym_n$.

\begin{theorem}\label{thm:qGroth}
Each quasiGrothendieck polynomial $\qgroth_\alpha \in \QSym[\beta]$ is a positive sum of multifundamental quasisymmetric polynomials. That is,
\[
\qgroth_\alpha = \sum_\gamma J_\gamma^\alpha \; \multifundamental_\gamma,
\]
where $J_\gamma^\alpha \in \mathbb{Z}_{\geq 0}[\beta]$ is a positive polynomial in $\beta$.
\end{theorem}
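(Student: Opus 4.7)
The plan is to deduce Theorem 1.1 as a corollary of a more general positive expansion of quasiLascoux polynomials into glide polynomials in $\Poly_n[\beta]$, then specialize to the quasisymmetric setting. The quasiLascoux basis $\{\qlascoux_a\}$ announced in the abstract is by its construction simultaneously a $K$-theoretic deformation of the quasikey basis and a lift of $\{\qgroth_\alpha\}$ from $\QSym_n[\beta]$ to $\Poly_n[\beta]$: for a composition $\alpha$, one should have $\qgroth_\alpha = \qlascoux_{a(\alpha)}$, where $a(\alpha)$ denotes the canonical left-justified weak composition associated to $\alpha$. Similarly, the glide basis $\{\glide_a\}$ of Pechenik--Searles lifts the multifundamental basis, with $\multifundamental_\gamma = \glide_{a(\gamma)}$.

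First, I would fix combinatorial models for both $\qlascoux_a$ and $\glide_a$ --- for instance, $K$-theoretic quasi-Kohnert diagrams for the former and glide diagrams for the latter --- in which each object carries a monomial weight in $x_1,\dots,x_n$ together with a power of $\beta$ tracking the ``extra'' $K$-theoretic markings. The main technical step is then to establish a positive expansion
\[
\qlascoux_a \;=\; \sum_b M^a_b\;\glide_b, \qquad M^a_b \in \mathbb{Z}_{\geq 0}[\beta],
\]
by constructing a destandardization (or reduct) map from the combinatorial objects indexing $\qlascoux_a$ onto a collection of weak compositions $b$, and verifying that each fiber is in weight- and $\beta$-preserving bijection with the combinatorial objects indexing a single $\glide_b$. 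Specializing $a = a(\alpha)$ yields $\qgroth_\alpha = \sum_b M^{a(\alpha)}_b \glide_b$; since the left-hand side lies in $\QSym_n[\beta]$ and the glides that happen to lie in $\QSym_n[\beta]$ are precisely the multifundamentals $\glide_{a(\gamma)} = \multifundamental_\gamma$, $\mathbb{Z}[\beta]$-linear independence forces the remaining $M^{a(\alpha)}_b$ to vanish, giving the desired expansion with $J^\alpha_\gamma := M^{a(\alpha)}_{a(\gamma)} \in \mathbb{Z}_{\geq 0}[\beta]$.

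The main obstacle is the destandardization step at the $K$-theoretic level. In the cohomological case $\beta=0$, a standardization argument on slide-diagram combinatorics (as in Assaf--Searles) handles the analogous quasikey-to-slide expansion, but the $K$-theoretic setting introduces additional marked cells that must be tracked carefully: one must ensure that the image of a quasiLascoux object under destandardization is a valid glide object carrying the same $\beta$-statistic, and that ambiguity in where the extra markings sit does not break weight preservation or the well-definedness of the fibers. Resolving this likely requires defining destandardization so that it acts on a canonical ``underlying skeleton'' of each object, with the $K$-theoretic markings transported along in a controlled fashion. A secondary, but nontrivial, verification is the claimed identification $\qgroth_\alpha = \qlascoux_{a(\alpha)}$ and its glide analogue; these should follow from an explicit description of both bases and a comparison of their defining combinatorics on left-justified shapes.
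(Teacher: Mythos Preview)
Your plan matches the paper's proof essentially exactly: the paper establishes the positive quasiLascoux-to-glide expansion via a destandardization map $\destand_Q$ on set-valued quasi-skyline fillings (Theorem~\ref{thm:QuasiLascoux2glide}), then deduces Theorem~\ref{thm:qGroth} from the identification of quasiGrothendieck polynomials as particular quasiLascoux polynomials (Proposition~\ref{prop:generalize_qGroth}) together with the fact that the quasisymmetric glides are precisely the multifundamentals. The only minor correction is that the relevant weak composition $a(\alpha)$ should have its zeros \emph{prepended} (so the last nonzero entry lies in position $n$), not appended; with left-justified $a(\alpha)$ you would only recover $\qgroth_\alpha(x_1,\dots,x_{\ell(\alpha)})$.
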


The multifundamental basis $\{ \multifundamental_\alpha \}$ of $\QSym_n[\beta]$ lifts to the \emph{glide basis} $\{\glide_a\}$ of $\Poly_n[\beta]$  \cite{Pechenik.Searles},  a $\beta$-deformation of the slide polynomials. An analogous deformation of the Demazure characters has been studied in \cite{Lascoux:transition,Ross.Yong,Kirillov:notes,Monical}. We call these the \emph{Lascoux polynomials} $\{ \lascoux_a \}$ in honor of A.~Lascoux, who essentially introduced them. They can be approached via the \emph{Lascoux atom} basis $\{ \lascouxatom_a \}$ of $\Poly_n[\beta]$ \cite{Monical}, a $\beta$-deformation of the Demazure atoms.  Finding a positive formula for the decomposition of Grothendieck polynomials $\{\groth_a\}$ into Lascoux polynomials $\{ \lascoux_a \}$, analogous to that for the decomposition of Schubert polynomials $\{ \schub_a \}$ into Demazure characters $\{ \key_a \}$ is an open problem. (There is an unpublished conjecture for this decomposition by V.~Reiner and A.~Yong; see \cite{Ross.Yong} for discussion.)

Our next major result is to introduce an appropriate $\beta$-deformation of the fundamental particles. The \emph{kaon} basis $\{ \kaon_a \}$ of $\Poly_n[\beta]$ yields a common refinement of the glide and Lascoux atom bases; we give explicit positive formulas for these refinements.

\begin{theorem} \label{thm:kaon_properties}
The set $\{ \kaon_a \}$ of kaons is a basis of $\Poly_n[\beta]$. The kaons deform the fundamental particles, in that specializing $\kaon_a$ at $\beta = 0$ yields the particle $\particle_a$. The kaons are a common refinement of the glide polynomials and of the Lascoux atoms; that is,  
\[\glide_a = \sum_b P_b^a \; \kaon_b \qquad \mbox{and} \qquad \lascouxatom_a = \sum_b Q_b^a \; \kaon_b,\] 
where $P_b^a, Q_b^a \in \mathbb{Z}_{\geq 0}[b]$ are positive polynomials in $\beta$.
\end{theorem}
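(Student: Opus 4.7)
Since the statement involves the new basis $\{ \kaon_a \}$, the first step is to give a combinatorial definition. Following the model used for the fundamental particles (which are themselves a common refinement of the slide and Demazure atom bases), I would define
\[
\kaon_a \;=\; \sum_{T \in \mathcal{K}(a)} \beta^{\ex(T)} \, \x^{\wt(T)}
\]
over a set $\mathcal{K}(a)$ of ``kaon fillings.'' These should be particle diagrams in which certain additional cells are allowed (``starred,'' with each star contributing a factor of $\beta$), subject to compatibility conditions that simultaneously accommodate the starring conventions of glide diagrams and of Lascoux atom fillings.

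Granting such a definition, claims (1) and (2) follow quickly. Specializing $\beta = 0$ kills every filling with a starred cell, leaving exactly the ordinary particle diagrams defining $\particle_a$, which gives (2). For (1), a direct computation shows that the unique filling realizing the smallest exponent vector is the ``flat'' particle diagram of $a$ with no starred cells, contributing the monomial $\x^a$; hence the transition matrix from $\{\kaon_a\}$ to the monomial basis of $\Poly_n[\beta]$ is unitriangular with respect to an appropriate term order on compositions, inherited from the analogous triangularity for the particles. Thus $\{ \kaon_a \}$ is a $\mathbb{Z}[\beta]$-basis.

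The positive expansions in (3) are then obtained by fibring the generating functions for $\glide_a$ and $\lascouxatom_a$ over $\mathcal{K}(a)$. For the glide expansion, I would define a ``left-packing'' map from glide diagrams of shape $a$ to kaon fillings, by successively moving cells as far left as allowed within their rows while preserving the starring data. The fibre over a kaon filling $T \in \mathcal{K}(b)$ is a polyhedral set of legal horizontal shifts; summing $\beta^{\ex(\cdot)}$ over this fibre yields a polynomial $P^a_b \in \mathbb{Z}_{\geq 0}[\beta]$ with visibly nonnegative coefficients. The Lascoux atom expansion $Q^a_b$ is then produced analogously via a parallel packing map on Lascoux atom fillings.

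The main obstacle will be designing $\mathcal{K}(a)$ so that both packing maps are simultaneously well-defined and fibre-positive; that is, a single kaon filling must serve as a canonical form for both a family of glide diagrams and a family of Lascoux atom fillings. At $\beta = 0$ the analogous problem was resolved by the observation that slide diagrams and atom fillings share a common combinatorial ``spine,'' namely the particle diagrams. Verifying that this spine lifts coherently to the $K$-theoretic setting, respecting both sets of starring rules without cancellation between the two sides, is the crux of the argument and is likely to occupy the bulk of the proof.
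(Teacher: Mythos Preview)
Your proposal is too schematic to count as a proof, and the places where it is specific diverge from what actually works.

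For the definition and the glide expansion, the paper does something close in spirit to what you suggest but much cleaner. A kaon is not built from new ``kaon fillings''; it is the generating function for \emph{mesonic glides} of $a$, i.e.\ those glides in which the interval structure is rigid: the $j$th block of $b$ must occupy exactly positions $n_{j-1}+1,\dots,n_j$ and must have $b_{n_j}\neq 0$. The glide expansion is then
\[
\glide_a \;=\; \sum_{\substack{b\ge a\\ b^+=a^+}} \kaon_b,
\]
with all coefficients $0$ or $1$ --- there is no $\beta$ in $P^a_b$ at all. Your description (``summing $\beta^{\ex(\cdot)}$ over a polyhedral fibre'') anticipates nontrivial $\beta$-polynomials here, which is the wrong picture. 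The actual argument is that every glide of $a$ factors uniquely as (slide each nonzero entry of $a$ leftward via (m.1), yielding some $b$ with $b^+=a^+$ and $b\ge a$) followed by (a mesonic glide of $b$); uniqueness of $b$ is the only nontrivial point. This unitriangularity is then used to deduce the basis property from the known fact that glides form a basis, rather than arguing triangularity against monomials directly.

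The real gap is the Lascoux atom expansion. A ``packing map on Lascoux atom fillings'' parallel to your glide map is not the mechanism. Lascoux atoms are generated by set-valued skyline fillings with conditions (S.1)--(S.5), and the map that organizes them into kaons is a \emph{destandardization}: one repeatedly finds the smallest entry $i$ whose leftmost occurrence fails a ``meson-highest'' condition (no $i^\uparrow$ weakly to its right in a different box, and not an anchor in column~1) and replaces every $i$ by $i+1$, merging duplicates. This changes the \emph{values} in boxes, not their positions, and the fibres over meson-highest fillings $S$ are exactly the mesonic glides of $\wt(S)$. Showing that destandardization preserves (S.1)--(S.5) and that its inverse is parametrized by mesonic glides is the substance of the proof; nothing like ``moving cells left within rows'' enters. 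Your closing paragraph correctly identifies this as the crux, but the guess you offer for how to resolve it is in the wrong direction.
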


Finally, our last major result is to introduce the new basis $\{ \qlascoux_a \}$ of \emph{quasiLascoux polynomials}, simultaneously lifting the quasiGrothendieck basis from $\QSym_n[\beta]$ to $\Poly_n[\beta]$ and giving a $\beta$-deformation of the quasikey polynomials. The quasiLascoux polynomials yield a common coarsening of the glide and Lascoux atom bases. We give explicit positive formulas for refining Lascoux polynomials in quasiLascoux polynomials and for refining quasiLascoux polynomials in both glides and Lascoux atoms.

\begin{theorem} \label{thm:qlascoux_properties}
The set $\{ \qlascoux_a \}$ of quasiLascoux polynomials is a basis of $\Poly_n[\beta]$. This basis lifts the quasiGrothendieck basis of $\QSym_n[\beta]$ in that 
\[
\{ \qlascoux_a \} \cap \QSym_n[\beta] = \{ \qgroth_\alpha \}.
\]
The quasiLascoux polynomials deform the quasikeys, in that specializing $\qlascoux_a$ at $\beta = 0$ yields the quasikey $\qkey_a$.
Finally, the quasiLascoux polynomials are a refinement of the Lascoux polynomials and are further refined by the glide polynomials and separately by the Lascoux atoms. That is, 
\[
\lascoux_a = \sum_b L_b^a \; \qlascoux_b, \qquad
\qlascoux_a = \sum_b M_b^a \; \glide_b, \qquad \mbox{and} \qquad 
\qlascoux_a = \sum_b N_b^a \; \lascouxatom_b,\]
where each of $L_b^a, M_b^a, N_b^a \in \mathbb{Z}_{\geq 0}[\beta]$ is a positive polynomial in $\beta$.
\end{theorem}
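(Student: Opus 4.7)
The strategy is to define $\qlascoux_a$ so that the Lascoux atom refinement is built in, in parallel with the cohomological construction of quasikey polynomials from Demazure atoms in \cite{Assaf.Searles:2}. Concretely, I would declare
\[
\qlascoux_a \;\coloneqq\; \sum_{b} \lascouxatom_b,
\]
where $b$ ranges over weak compositions with the same underlying strong composition as $a$ and satisfying an appropriate ``right-shift'' partial order condition with respect to $a$ (the exact condition being the one that, specialized at $\beta=0$, recovers the quasikey definition $\qkey_a = \sum_b \atom_b$). This makes the Lascoux-atom refinement $\qlascoux_a = \sum_b N^a_b \lascouxatom_b$ tautological with $N^a_b \in \{0,1\}$, and since $\lascouxatom_b|_{\beta=0} = \atom_b$, specializing at $\beta=0$ immediately recovers $\qkey_a$.

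For the basis property, the sum above is upper unitriangular with respect to the known Lascoux atom basis when weak compositions are ordered first by flat composition and then by a suitable dominance order, so $\{\qlascoux_a\}$ is a basis of $\Poly_n[\beta]$. For the intersection $\{\qlascoux_a\} \cap \QSym_n[\beta] = \{\qgroth_\alpha\}$, I would check that $\qlascoux_a$ is quasisymmetric exactly when $a$ is the canonical (say, left-justified) representative of its flat composition $\alpha$; in that case the defining sum ranges over \emph{all} weak compositions flattening to $\alpha$, and I would match this Lascoux atom expansion against the known one for $\qgroth_\alpha$ coming from \cite{Monical}, identifying both as generating functions over the same underlying tableaux.

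The remaining two expansions I would prove as follows. For the glide refinement $\qlascoux_a = \sum_b M^a_b \glide_b$, apply Theorem~\ref{thm:kaon_properties} to each constituent Lascoux atom, writing $\lascouxatom_b$ as a nonnegative $\beta$-combination of kaons, then group the kaons across all $b$ in the defining sum into glide polynomials; the grouping should be forced by a common kaon-support condition once the quasiLascoux equivalence class is fixed, and positivity is inherited from Theorem~\ref{thm:kaon_properties}. For the Lascoux expansion $\lascoux_a = \sum_b L^a_b \qlascoux_b$, invoke the known positive expansion of $\lascoux_a$ into Lascoux atoms from \cite{Monical} and observe that the set of indices appearing is a disjoint union of the equivalence classes defining quasiLascoux polynomials; collecting terms class by class yields the desired expansion with $L^a_b \in \mathbb{Z}_{\geq 0}[\beta]$.

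The main obstacle will be the glide refinement. Unlike in the cohomological setting, where the analogous fact that a quasikey is glide-positive is proved by an essentially sign-free combinatorial argument, here one must track how the kaon expansions of the various Lascoux atoms in a single quasiLascoux class interact: there is no a priori reason that the kaons appearing, weighted by their $\beta$-powers, assemble cleanly into glides. I expect this to require either an explicit weight-preserving bijection between the kaon multisets supporting the two sides, or a carefully chosen filtration of the defining sum by a statistic (perhaps the number of excess cells contributing $\beta$) so that the refinement can be verified one degree in $\beta$ at a time, with the degree-zero case reducing to the known quasikey-to-slide refinement of \cite{Assaf.Searles:2}.
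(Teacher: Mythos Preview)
Your definition, the $\beta=0$ specialization, and the Lascoux atom refinement match the paper exactly: the paper defines $\qlascoux_a \coloneqq \sum_{b \geq a,\, b^+ = a^+} \lascouxatom_b$, so $N^a_b \in \{0,1\}$ and $\qlascoux_a^{(0)} = \qkey_a$ are immediate. Your basis argument via unitriangularity over Lascoux atoms would work; the paper instead argues directly on monomial leading terms, but the content is equivalent.

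There are two substantive issues.

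\textbf{The Lascoux expansion.} You write that you will ``invoke the known positive expansion of $\lascoux_a$ into Lascoux atoms from \cite{Monical}.'' This is the gap. What \cite{Monical} proves is the expansion of \emph{operator} Lascoux polynomials into \emph{operator} Lascoux atoms; the equality of the operator and combinatorial families is only conjectural there. The paper works entirely with the combinatorial definitions (Definitions~\ref{defn:lasAtom} and~\ref{def:L}), and the expansion $\lascoux_a = \sum_{b \in \lswap(a)} \lascouxatom_b$ (Theorem~\ref{thm:lDecomp}) is one of its main technical results. Proving it occupies most of Section~\ref{sec:lascoux}: one builds a column-set-preserving bijection $\psi : \KSSF(a) \to \coprod_{b \in \lswap(a)} \ASSF(b)$ via a ``threading'' of Kohnert diagrams and a ``nearest skyline'' analysis, and then extends $\psi$ to set-valued fillings. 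Only after this is established does the grouping into $\Qlswap$ classes give $\lascoux_a = \sum_{b \in \Qlswap(a)} \qlascoux_b$. Your proposal skips exactly the hard part.

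\textbf{The glide refinement.} Your plan is to expand each $\lascouxatom_b$ into kaons via Theorem~\ref{thm:kaon_properties} and then regroup the kaons into glides via $\glide_c = \sum_{d \geq c,\, d^+ = c^+} \kaon_d$. You correctly flag that there is no a priori reason the kaon multisets assemble into such classes, and your suggested $\beta$-degree filtration does not obviously help. The paper sidesteps this entirely: it realizes $\coprod_{b \geq a,\, b^+=a^+} \LASSF(b)$ as a single set $\qlascouxSSF(a)$ of fillings (replacing condition (S.5) by the weaker (S.$5'$)), and defines a destandardization $\destand_Q$ on this set using $i+1$ in place of $i^\uparrow$. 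The $\destand_Q$-fixed points (``quasiYamanouchi'' fillings) then index the glide expansion directly, with each fiber generating a single glide. This is parallel to, but not deduced from, the kaon expansion of Lascoux atoms; the two destandardizations differ precisely in the $i^\uparrow$ versus $i+1$ distinction.
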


The relations among these nine families of $K$-theoretic polynomials are illustrated in Figure~\ref{fig:K_bases}; compare to the relations among their $\beta=0$ analogues, as illustrated in Figure~\ref{fig:H_bases}.

\begin{figure}[h]
\begin{tikzpicture}
\node[Kplaque, rounded corners] (Grothendieck) {\pcolor{$\groth_a$}};
\node[right=2 of Grothendieck,Kplaque, rounded corners] (Lascoux) {\pcolor{$\lascoux_a$}};
\node[right=2 of Lascoux,newplaque] (qLascoux) {\pcolor{$\qlascoux_a$}};
\node[above right=1 and 2 of qLascoux,Kplaque, rounded corners] (Lascouxatom) {\pcolor{$\lascouxatom_a$}};
\node[below right=1 and 2 of qLascoux,Kplaque, rounded corners] (glide) {\pcolor{$\glide_a$}};
\node[right=4 of qLascoux,newplaque] (kaon) {\pcolor{$\kaon_a$}};
\node[below=3 of Lascoux,Kplaque, rounded corners] (symGroth) {\scolor{$\sgroth_\lambda$}};
\node[below=2.9 of qLascoux,Kplaque, rounded corners] (qGroth) {\qcolor{$\qgroth_\alpha$}};
\node[below=1.15 of glide,Kplaque, rounded corners] (multifundamental) {\qcolor{$\multifundamental_\alpha$}};
  \begin{scope}[nodes = {draw = none}]
    \path 
    (Grothendieck) edge[posex,dotted] (Lascoux)
    (Lascoux) edge[posex,red]  (qLascoux)
    (qLascoux) edge[posex,red] (Lascouxatom)
    (qLascoux) edge[posex,red] (glide)
    (Lascouxatom) edge[posex,red] (kaon)
    (glide) edge[posex,red] (kaon)
    (symGroth) edge[lift] (Grothendieck)
    (symGroth) edge[lift] (Lascoux)
    (qGroth) edge[lift,red] (qLascoux)
    (multifundamental) edge[lift] (glide)
    (symGroth) edge[posex] (qGroth)
    (qGroth) edge[posex,red] (multifundamental)
      ;
  \end{scope}
\end{tikzpicture}
\caption{The $K$-theoretic analogues of the nine cohomological families of polynomials of Figure~\ref{fig:H_bases}.
As in Figure~\ref{fig:H_bases}, families depicted in \scolor{orange} are bases of $\Sym_n$, those in \qcolor{purple} are bases of $\QSym_n$, and those in \pcolor{green} are bases of $\Poly_n$. The thinner hooked arrows denote that the basis at the tail is a subset of the basis at the head. The thicker arrows denote that the basis at the head refines the basis at the tail.
 Those families and arrows that are original to this paper are marked in \textcolor{red}{red}. The dotted arrow is conjectural; see \cite{Ross.Yong}.
}
\label{fig:K_bases}
\end{figure}

Except for the $\beta$-Grothendieck polynomials $\{ \groth_a \}$ and their symmetric subset $\{ \sgroth_\lambda \}$, the geometric significance of these $K$-analogues is currently obscure. While, for example, the glide polynomials seem useful in the study of $\beta$-Grothendieck polynomials (and thereby of the connective $K$-theory of $\flags_n$), it is currently unknown how to interpret any single glide polynomial $\glide_a$ as representing a geometric object or datum. We conclude with some conjectures that appear, to us, to suggest geometric meaning for these polynomials. While it is possible that these conjectures might be proved by entirely combinatorial means (e.g., sign-reversing involutions), they seem to us to have the flavor of Euler characteristic calculations. Ideally, we desire a proof of these conjectures where the relevant polynomials are given appropriate geometric interpretations, so that the coefficients in question become the Euler characteristic of some explicit object.
We note that these conjectures are fundamentally $K$-theoretic, having no analogue in the cohomological setting.

For weak compositions $a$ and $b$, let $M^a_b(\beta)$ denote the coefficient of $\glide_b$ in the glide expansion of $\qlascoux_a$ and let $Q^a_b(\beta)$ denote the coefficient of $\kaon_b$ in the kaon expansion of the Lascoux atom $\lascouxatom_a$. Note that $M^a_b(\beta)$ and $Q^a_b(\beta)$ are nonnegative monomials in the single variable $\beta$.
\begin{conjecture}\label{conj:euler}
Let $a$ be a weak composition. Then we have
\[
\sum_b M^a_b(-1) \in \{ 0, 1 \} \text{ and } \sum_b Q^a_b(-1) \in \{0, 1\},
\]
where both sums are over all weak compositions $b$. \qed
\end{conjecture}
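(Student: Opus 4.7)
The plan is to attack each half of Conjecture~\ref{conj:euler} by a sign-reversing involution argument on the underlying combinatorial models. Since each $M^a_b(\beta)$ and $Q^a_b(\beta)$ is a nonnegative monomial, the proofs of Theorems~\ref{thm:kaon_properties} and~\ref{thm:qlascoux_properties} should furnish explicit sets $\mathcal{M}^a$ and $\mathcal{Q}^a$ of combinatorial objects (glide-compatible and kaon-compatible fillings, say, with some distinguished ``$K$-theoretic'' marked cells) satisfying
\[
\sum_b M^a_b(\beta) = \sum_{T \in \mathcal{M}^a} \beta^{\wt(T)}, \qquad \sum_b Q^a_b(\beta) = \sum_{T \in \mathcal{Q}^a} \beta^{\wt(T)},
\]
where $\wt(T) \in \mathbb{Z}_{\geq 0}$ records the number of marked cells. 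The conjecture then reads
\[
\sum_{T \in \mathcal{M}^a} (-1)^{\wt(T)} \in \{0,1\}, \qquad \sum_{T \in \mathcal{Q}^a} (-1)^{\wt(T)} \in \{0,1\}.
\]

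The main step is to construct a canonical involution $\iota$ on $\mathcal{M}^a$, and separately on $\mathcal{Q}^a$, that changes $\wt$ by exactly $\pm 1$ on every non-fixed point and whose fixed-point set has cardinality $0$ or $1$. A natural candidate is a ``first active cell'' toggle: read $T$ in some fixed canonical order, locate the earliest cell at which a marked cell can either be inserted or removed while preserving the defining conditions of $\mathcal{M}^a$ (respectively $\mathcal{Q}^a$), and perform that toggle. The fixed points are then precisely those $T$ admitting no such local move, and one would aim to show that either every object admits a toggle (giving sum $0$), or there is a unique fixed $T$ and it satisfies $\wt(T) = 0$ (giving sum $1$).

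The principal obstacle will be to engineer this toggle so that it simultaneously (i) preserves membership in $\mathcal{M}^a$ or $\mathcal{Q}^a$---the glide and kaon conditions involve left-justification and row/column constraints that tend to couple cells non-locally, so a naive local toggle may violate them, (ii) is genuinely involutive, and (iii) cleanly isolates the fixed points. In the $\beta$-Grothendieck and Lascoux settings, analogous subtleties have historically been finessed by jeu-de-taquin-like or ``ladder'' moves, so the technology should be adaptable with sufficient care about the canonical order. As a plausible cross-check and potential base case, one could push Theorem~\ref{thm:qGroth} through the refinement $\qlascoux_a = \sum_b M^a_b \glide_b$: the analogous alternating sums for $\qgroth_\alpha$ at $\beta = -1$ ought to verify the conjecture when $a$ is quasisymmetric, which would both supply evidence for the general statement and anchor an inductive argument on non-quasisymmetric inputs. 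Finally, and as the authors stress, the most satisfying resolution would be a geometric one, in which $\glide_b$ and $\kaon_b$ are realized as Euler characteristics of structure sheaves on explicit varieties, whereupon the $\{0,1\}$ dichotomy would fall out of standard higher cohomology vanishing.
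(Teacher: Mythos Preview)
The paper does not prove this statement: it is explicitly stated as a \emph{conjecture}, and the only evidence offered is computational verification for weak compositions $a$ with at most $3$ zeros and $|a| \le 7$, together with the heuristic remark that the identities have the flavor of Euler characteristic calculations. There is no argument in the paper for you to compare against.

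Your proposal is not a proof but a research plan, and you are candid about this. As a plan it is reasonable in outline---sign-reversing involutions are the standard combinatorial tool for alternating-sum identities---but it contains no actual mathematics beyond naming the strategy. The central step, constructing the toggle $\iota$, is left entirely unspecified: you do not say what the ``first active cell'' is, what the canonical reading order is, or why the toggle preserves the defining conditions of $\mathcal{M}^a$ or $\mathcal{Q}^a$. You yourself flag exactly these points as the principal obstacles, and you do not resolve any of them. The suggestion to check the quasisymmetric case as a warm-up is sensible, but note that even there you have not carried anything out. In short, what you have written is a plausible opening paragraph of a grant proposal, not a proof; the conjecture remains open, both in the paper and in your attempt.
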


For example, for $a = (0,6,6,2)$, we have \[ \sum_b M_{b}^{a}(\beta) = 16\beta^3 + 75\beta^2 + 94\beta + 36 \]  and 
\[ \sum_b Q^a_b(\beta) = 16\beta^3 + 66\beta^2 + 80\beta + 31. \]
In both cases, substituting $\beta = -1$ yields $1$, as predicted.  
We have verified Conjecture~\ref{conj:euler} by computer for all $a$ with at most 3 zeros and $|a| \leq 7$.

This paper is organized as follows. Section~\ref{sec:defn} recalls the necessary combinatorics of the bases studied in previous works.  Section~\ref{sec:mesons} introduces the kaon basis and proves Theorem~\ref{thm:kaon_properties}, giving the key properties of this basis.  Similarly, Section~\ref{sec:lascoux} introduces the quasiLascoux basis and establishes its key properties via proving Theorems~\ref{thm:qGroth} and~\ref{thm:qlascoux_properties}.  We also suggest there (Conjecture~\ref{conj:multiplying_lascouxs}) a remarkable positivity phenomenon for products of Lascoux polynomials.

\section{Definitions and preliminaries}
\label{sec:defn}

\subsection{Glide polynomials and fundamental slide polynomials}
\label{sec:glide_polynomials}

Given a weak composition $a$, the {\bf positive part} of $a$ is the (strong) composition $a^+$ obtained by deleting all zero terms from $a$. For example, $0102^+ = 12$. 

Given weak compositions $a$ and $b$ of length $n$, say that $b$ {\bf dominates} $a$, denoted by $b \geq a$, if
\begin{equation*}
  b_1 + \cdots + b_i \ge a_1 + \cdots + a_i
\end{equation*}
for all $i=1,\ldots,n$. For example, $0120 \ge 0111$. Note that this partial ordering on weak compositions extends the usual dominance order on partitions.

In \cite{Pechenik.Searles}, a {\bf weak komposition} is defined to be a weak composition where the positive integers may be colored arbitrarily black or \textcolor{red}{red}.
The {\bf excess} $\ex(b)$ of a weak komposition $b$ is the number of red entries in $b$. 

\begin{definition}\cite[Definition~2.2]{Pechenik.Searles}\label{def:glide_komposition}
Let $a$ be a weak composition with nonzero entries in positions $n_1 < \cdots < n_\ell$.
The weak komposition $b$ is a {\bf glide} of $a$ if there exist integers $0 = i_0 < i_1 < \dots < i_\ell$ such that, for each $1 \leq j \leq \ell$, we have
\begin{itemize}
\item[(G.1)] $a_{n_j} = b_{i_{j-1} + 1} + \dots + b_{i_j} - \ex(b_{i_{j-1}+1}, \dots, b_{i_{j}})$,
\item[(G.2)] $i_{j} \leq n_{j}$, and
\item[(G.3)] the leftmost nonzero entry among $b_{i_{j-1} + 1}, \dots, b_{i_j}$  is black.
\end{itemize} 
\end{definition}

Equivalently, a weak komposition $b$ is a glide of the weak composition $a$ if $b$ can be obtained from $a$ by a finite sequence of the following local moves:
\begin{itemize}
\item[(m.1)] $0p \Rightarrow p0$, (for $p \in \mathbb{Z}_{>0}$);
\item[(m.2)] $0p \Rightarrow qr$ (for $p,q,r \in \mathbb{Z}_{>0}$ with $q+r = p)$;
\item[(m.3)] $0p \Rightarrow q\red{r}$ (for $p,q,r \in \mathbb{Z}_{>0}$ with $q+r = p+1$).
\end{itemize}

\begin{example}
Let $a=(0,2,0,0,2,0,1)$. 
The weak kompositions $(1,{\color{red}2},0,2,0,1,{\color{red}1})$ and $(2,1,{\color{red}2},{\color{red}1},{\color{red}1},1,0)$ are glides of $a$.
\end{example}

\begin{definition}\cite[Definition~2.5]{Pechenik.Searles}\label{def:glide_polynomial}
For a weak composition $a$ of length $n$, the {\bf glide polynomial} $\glide^{(\beta)}_a = \glide^{(\beta)}_a(x_1, \dots, x_n)$ is 
\[ \glide^{(\beta)}_a = \sum_b \beta^{\ex(b)} x_1^{b_1} \cdots x_n^{b_n}, \] where the sum is over all weak kompositions $b$ that are glides of $a$. As for $\groth^{(\beta)}_w$, we may drop $\beta$ from the notation, unless it is specialized to a particular value.
\end{definition}

\begin{example}\label{ex:glidepoly} We have
  \[\glide_{0201} = \mathbf{x}^{0201} + \mathbf{x}^{1101} + \mathbf{x}^{0210} + \mathbf{x}^{1110} + \mathbf{x}^{2001} + \mathbf{x}^{2010} + \mathbf{x}^{2100} + \] \[\beta\mathbf{x}^{0211} + \beta\mathbf{x}^{1111} + \beta\mathbf{x}^{1201}+\beta\mathbf{x}^{1210} + \beta\mathbf{x}^{2011} + 2\beta\mathbf{x}^{2101} +  2\beta\mathbf{x}^{2110} + \]
  \[\beta^2\mathbf{x}^{1211} + 2\beta^2\mathbf{x}^{2111},\]
  where $\mathbf{x}^b = x_1^{b_1}\ldots x_n^{b_n}$.
\end{example}

In \cite[Proposition~2.16]{Pechenik.Searles}, it was observed that the {\bf fundamental slide polynomials} $\slide_a$ of \cite{Assaf.Searles} are $\beta=0$ specializations of glide polynomials; that is, 
\[\slide_a = \glide^{(0)}_a.\] We will take this as definitional for fundamental slides.

\subsection{Lascoux atoms and quasiGrothendieck polynomials} 

The \emph{skyline diagram} $D(a)$ of a weak composition $a$ is the diagram with $a_i$ boxes in row $i$, left-justified. In our convention (which is upside-down from that of \cite{HLMvW11:LRrule, Monical} and rotated $90$ degrees counterclockwise from that of \cite{Mason:RSK}), row 1 is the lowest row.
A \emph{triple} of a skyline diagram is a collection of three boxes with two adjacent in a row and either (Type A) the third box is above the right box and the lower row is weakly longer, or (Type B) the third box is below the left box and the higher row is strictly longer. 

\begin{figure}[ht]
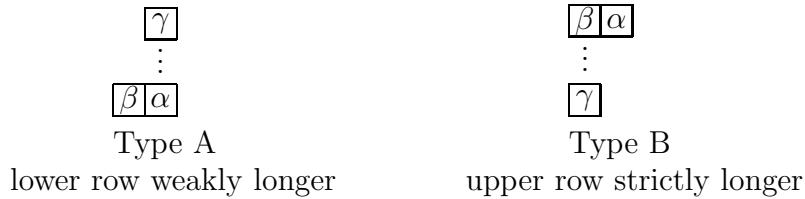

  \begin{displaymath}
    \begin{array}{l}
        \tableau{ & \gamma } \\[-0.5\cellsize]  \hspace{1.25\cellsize} \vdots  \hspace{0.4\cellsize} \\ \tableau{  \beta & \alpha } \\  \mbox{Type A} \\ \hspace{-3\cellsize} \mbox{lower row weakly longer}
    \end{array}
    \hspace{3\cellsize}
    \begin{array}{l}
   \hspace{3\cellsize}   \tableau{ \beta & \alpha } \\[-0.5\cellsize] \hspace{3.35\cellsize}  \vdots \hspace{1.5\cellsize} \\ \hspace{3\cellsize} \tableau{ \gamma & } \\ \hspace{3\cellsize}\mbox{Type B} \\  \mbox{upper row strictly longer}
    \end{array}
  \end{displaymath}
  \caption{\label{fig:triple}Triples for skyline diagrams.}
\end{figure}

Given a filling of the skyline diagram with numbers, a triple (of either type) is called an \emph{inversion triple} if either $\gamma<\alpha\le \beta$ or $\alpha \le \beta < \gamma$, and a \emph{coinversion triple} if $\alpha \le  \gamma \le \beta$. 

In \cite{Monical}, C.~Monical introduced the notion of semistandard set-valued fillings of skyline diagrams, in order to define the (combinatorial) Lascoux atoms, which are $K$-theoretic analogues of the Demazure atoms.   

\begin{definition}\label{def:setSSF}
A {\bf set-valued filling} of a skyline diagram is an assignment a non-empty set of positive integers to each box of the diagram.
The maximum entry in each box is called the {\bf anchor} and all other entries are called {\bf free}.
A set-valued filling is {\bf semistandard} if
\begin{itemize}
\item[(S.1)]  entries do not repeat in a column, 
\item[(S.2)]  rows are weakly decreasing  where sets $A \geq B$ if $\min A \geq \max B$,
\item[(S.3)]  every triple of anchors is an inversion triple, 
\item[(S.4)]  each free entry is in the cell of the least anchor in its column such that (S.2) is not violated, and 
\item[(S.5)] anchors in the first column are equal to their row index.
\end{itemize}
\end{definition}

\begin{remark}\label{rmk:svsky}
The condition (S.5) replaces the equivalent ``basement'' requirement in \cite{Monical}. The condition (S.4) above differs slightly from condition (S4) in \cite[\textsection 1.2]{Monical}, which puts free entries in the lowest possible row such that (S.2) is not violated. These definitions are however equivalent in the sense that there is a simple weight-preserving (and moreover column set-preserving) bijection between these two notions of semistandard set-valued skyline fillings via rearranging the free entries appropriately in each column. The convention in Definition~\ref{def:setSSF} turns out to be more natural in the context of the operations we wish to perform on these fillings.
\end{remark}

Given a set-valued filling $F$ of shape $a$, we define the \emph{weight} of $F$ to be the weak composition $\wt(F) = (c_1,\ldots,c_n)$ where $c_i$ is the number of $i$'s in $F$.  Furthermore, $|F| = |\wt(F)|.$  Likewise, the \emph{excess} of $F$, denoted $\ex(F)$, is the number of free entries of $F$, or equivalently, $\ex(F) = |F|-|a|$. 
Given a weak composition $a$, let $\LASSF(a)$ be the set of semistandard set-valued skyline diagrams of shape $a$. See Figure~\ref{fig:SetSkyFill} for examples; the anchor entries are given in bold.

\begin{figure}[ht]
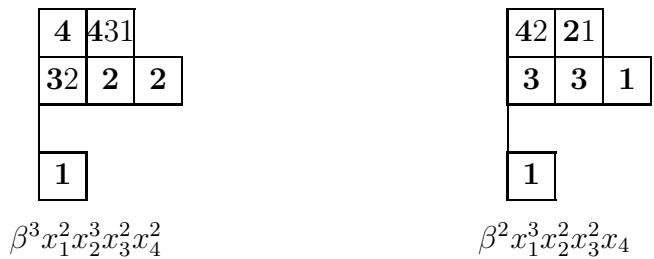

\begin{center}
\begin{minipage}{.4\linewidth}
\begin{center}
	\begin{ytableau}
	\none & \textbf{4} & \textbf{4}31 \\
	\none  & \textbf{3}2 & \textbf{2} & \textbf{2}\\
	\none \vline \\
	\none  & \textbf{1}  
	\end{ytableau} \\\vspace*{.1in}
	$\beta^3x_1^2x_2^3x_3^2x_4^2$
\end{center}
\end{minipage}
\begin{minipage}{.4\linewidth}
\begin{center}
	\begin{ytableau}
	 \none & \textbf{4}2 & \textbf{2}1 \\ 
	 \none & \textbf{3} & \textbf{3} & \textbf{1}\\ 
	 \none \vline \\
	\none & \textbf{1} \\ 
	\end{ytableau} \\\vspace*{.1in}
$\beta^2x_1^3x_2^2x_3^2x_4$
\end{center}
\end{minipage}
\end{center}
\caption{Two elements of $\LASSF(1,0,3,2)$ and their corresponding monomial weights. The anchor in each box is drawn in bold; the unbolded entries are free.}\label{fig:SetSkyFill}
\end{figure}

\begin{definition}[\cite{Monical}]  Given a weak composition $a$, the {\bf (combinatorial) Lascoux atom} $\lascouxatom_a^{(\beta)}$ is 
\[ \lascouxatom_a^{(\beta)} = \sum_{F \in \LASSF(a)} \beta^{\ex(F)}\x^{\wt(F)}. \] 
\label{defn:lasAtom}
\end{definition} 

We will drop the word ``combinatorial'' in Definition~\ref{defn:lasAtom} from now on, as we will not have cause to consider any of the other conjecturally equivalent definitions of Lascoux atoms.

\begin{definition}\label{def:Demazure_atom}
For a weak composition $a$, the {\bf Demazure atom} $\atom_a$ is the $\beta = 0$ specialization of the corresponding Lascoux atom. That is,
\[
\atom_a = \lascouxatom_a^{(0)}.
\] 
Equivalently, $\atom_a$ is the generating polynomial for semistandard set-valued skyline diagrams of shape $a$ where exactly one number appears in each box.
\end{definition}

Demazure atoms were originally defined by A.~Lascoux and M.-P.~Sch\"utzenberger \cite{Lascoux.Schutzenberger:key} in a different way; the equivalence of Definition~\ref{def:Demazure_atom} is due to work by S.~Mason \cite{Mason:atom}.
Thus, $\lascouxatom_a$ is a inhomogeneous deformation of $\atom_a$.  The Lascoux atoms form a (finite) basis of $\Poly[\beta] = \mathbb{Z}[x_1,x_2,...][\beta]$ by \cite[Proposition~2.2]{Monical}.

\begin{definition}[{\cite[\textsection 3]{Monical}}]\label{def:QuasiGroth}
Given a (strong) composition $\alpha$, the {\bf quasiGrothendieck polynomial} $\qgroth_\alpha^{(\beta)}$ in $n$ variables is  
\[\qgroth_\alpha^{(\beta)}(x_1,\ldots , x_n) = \sum_{a^+ = \alpha} \lascouxatom_a, \]
where the sum is over weak compositions of length $n$.
The $\beta=0$ specialization 
\[\qgroth_\alpha^{(0)}(x_1, \ldots , x_n)=\qschur_\alpha(x_1, \ldots , x_n)\]
is the {\bf quasiSchur polynomial} $\qschur_\alpha$ of J.~Haglund, K.~Luoto, S.~Mason, and S.~van Willigenburg  \cite{HLMvW11:LRrule}.
\end{definition}

In \cite{Monical}, it was shown that the quasiGrothendieck polynomials form another finite basis of $\QSym[\beta]$.

The Lascoux atoms refine the symmetric Grothendieck polynomials:

\begin{theorem}[\cite{Monical}]\label{thm:GLasDecomp}
\[ \sgroth_\lambda^{(\beta)}(x_1, \ldots x_n) = \sum_{\sort(a) = \lambda} \lascouxatom_a^{(\beta)},\]
where the sum is over weak compositions of length $n$ and $\sort(a)$ is the partition formed by sorting the parts of $a$ in weakly decreasing order. 
\end{theorem}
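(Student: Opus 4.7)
The plan is to establish the identity by a weight- and excess-preserving bijection, extending S.~Mason's classical bijection between semistandard Young tableaux and semistandard skyline fillings to the set-valued setting. Recall that by Buch's theorem, the symmetric Grothendieck polynomial has the combinatorial expansion $\sgroth_\lambda^{(\beta)}(x_1, \ldots, x_n) = \sum_T \beta^{\ex(T)} \x^{\wt(T)}$, where $T$ ranges over set-valued semistandard Young tableaux of shape $\lambda$ with entries in $\{1, \ldots, n\}$. The task thus reduces to constructing a weight- and excess-preserving bijection
\[
\Phi : \{\text{set-valued SSYT of shape } \lambda\} \;\longrightarrow\; \bigsqcup_{a \,:\, \sort(a) = \lambda} \LASSF(a).
\]

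To construct $\Phi$, I would first distinguish, in each box of a set-valued SSYT $T$, the anchor (the largest entry) from the free entries.  The anchors form an ordinary SSYT $T^\circ$ of shape $\lambda$, and to this backbone I would apply Mason's bijection, obtaining a weak composition $a$ with $\sort(a) = \lambda$ together with an assignment of each anchor of $T^\circ$ to a cell of the skyline diagram $D(a)$, preserving column sets and producing only inversion triples. Then, for each free entry $k'$ lying in a box $(i,j)$ of $T$ alongside an anchor $k$, I would place $k'$ in column $j$ of the skyline filling, in the cell with the smallest anchor still at least as large as $k'$, exactly as dictated by axiom (S.4). The inverse map $\Phi^{-1}$ is defined analogously: strip away the free entries of $F \in \LASSF(a)$, invert Mason's bijection on the anchor skeleton to recover an SSYT of shape $\lambda$, and reattach the free entries to the appropriate boxes in the column in which they lived.

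The main obstacle is verifying that the image $\Phi(T)$ genuinely lies in $\LASSF(a)$: axioms (S.1), (S.2), (S.5), together with weight- and excess-preservation, follow immediately from the column-preserving character of the construction, and (S.4) holds tautologically from the placement rule. The crucial point is (S.3), the inversion triple condition on anchors, which is exactly the content of Mason's original theorem applied to $T^\circ$, so no additional combinatorics involving the free entries is required here. Bijectivity then follows by observing that the two procedures are manifestly mutual inverses on the anchor skeletons, while the free-entry placement within each column is dictated uniquely by axiom (S.4) on the skyline side and by the ordering within each box on the SSYT side.
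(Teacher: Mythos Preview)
The paper does not prove this theorem; it is quoted from \cite{Monical} without argument, so there is no in-paper proof to compare against. Your outline --- separating anchors from free entries, applying Mason's column-set-preserving bijection to the anchor skeleton, and then redistributing free entries column by column according to (S.4) --- is indeed the method of \cite{Monical}, and is correct in spirit.

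There is, however, one real gap in your sketch. You write that (S.2) ``follows immediately from the column-preserving character of the construction'' and that (S.4) ``holds tautologically from the placement rule,'' but neither claim is quite right: (S.2) is a \emph{row} condition, and while (S.4) prescribes \emph{where} to put each free entry, you have not argued that such a cell \emph{exists}. Concretely, a free entry $k'$ destined for column $j$ of the skyline must land in a box whose anchor exceeds $k'$ and whose right neighbor (if present) has maximum at most $k'$. Showing such a box always exists requires a short counting argument comparing, in columns $j$ and $j+1$, the number of anchors exceeding $k'$, using column-strictness of the set-valued SSYT together with the column-set preservation of Mason's map. (The present paper carries out precisely this style of argument when extending $\psi$ to $\overline{\psi}$ in the proof of Theorem~\ref{thm:lDecomp}.) The symmetric check is needed for $\Phi^{-1}$ as well: you must verify that every free entry stripped from an $F\in\LASSF(a)$ can be legally reinserted into the reconstructed SSYT. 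These existence lemmas are exactly the content that goes beyond the $\beta=0$ case, and your proposal passes over them.
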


Combining Theorem~\ref{thm:GLasDecomp} and Definition~\ref{def:QuasiGroth} yields the decomposition of $\sgroth_\lambda^{(\beta)}$ into quasiGrothendieck polynomials:

\begin{corollary}[\cite{Monical}] \label{cor:grothtoquasigroth}
\[ \sgroth_\lambda^{(\beta)}(x_1, \ldots , x_n) = \sum_{\sort(\alpha) = \lambda} \qgroth_\alpha^{(\beta)}(x_1,\ldots , x_n).\]
\end{corollary}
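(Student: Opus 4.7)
The plan is to prove Corollary~\ref{cor:grothtoquasigroth} by a direct reindexing of the sum in Theorem~\ref{thm:GLasDecomp}, using Definition~\ref{def:QuasiGroth} to collapse inner sums. This is essentially bookkeeping, so I expect no real obstacle beyond verifying that the indexing sets partition correctly.

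First, I would start from Theorem~\ref{thm:GLasDecomp}, which gives
\[ \sgroth_\lambda^{(\beta)}(x_1,\ldots,x_n) \;=\; \sum_{\sort(a) = \lambda} \lascouxatom_a^{(\beta)}, \]
where $a$ ranges over weak compositions of length $n$. The key observation is that every weak composition $a$ has a well-defined positive part $a^+$, which is a strong composition obtained by deleting zeros, and $\sort(a^+) = \sort(a)$, since deleting zeros and then sorting yields the same partition as sorting directly. Therefore I can partition the index set according to $a^+$: group the weak compositions $a$ of length $n$ with $\sort(a)=\lambda$ by their positive part $\alpha := a^+$, which necessarily satisfies $\sort(\alpha)=\lambda$.

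Next, I would perform the substitution
\[ \sgroth_\lambda^{(\beta)}(x_1,\ldots,x_n) \;=\; \sum_{\substack{\alpha \\ \sort(\alpha)=\lambda}} \; \sum_{\substack{a \\ a^+ = \alpha}} \lascouxatom_a^{(\beta)}, \]
where the inner sum is over weak compositions $a$ of length $n$ with $a^+ = \alpha$, and the outer sum is over strong compositions $\alpha$ with $\sort(\alpha) = \lambda$. Conversely, any such $\alpha$ does arise as $a^+$ for some weak composition $a$ of length $n$ (provided $\ell(\alpha) \leq n$, which is automatic when $\sort(\alpha)=\lambda$ since $\ell(\lambda) \leq n$ is required for the Theorem~\ref{thm:GLasDecomp} sum to be nonempty), so the reindexing loses and gains no terms.

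Finally, by Definition~\ref{def:QuasiGroth}, the inner sum is exactly the quasiGrothendieck polynomial $\qgroth_\alpha^{(\beta)}(x_1,\ldots,x_n)$, yielding
\[ \sgroth_\lambda^{(\beta)}(x_1,\ldots,x_n) \;=\; \sum_{\sort(\alpha)=\lambda} \qgroth_\alpha^{(\beta)}(x_1,\ldots,x_n), \]
as required. The only subtlety worth pausing over is the compatibility of the length-$n$ restrictions across the two cited results; this is immediate because both Theorem~\ref{thm:GLasDecomp} and Definition~\ref{def:QuasiGroth} use the same convention of summing over weak compositions of length exactly $n$.
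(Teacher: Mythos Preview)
Your proposal is correct and is exactly the approach the paper takes: the paper simply states that the corollary follows by ``combining Theorem~\ref{thm:GLasDecomp} and Definition~\ref{def:QuasiGroth},'' and your reindexing argument is precisely the routine bookkeeping that makes this combination explicit.
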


Setting $\beta=0$ in Theorem~\ref{thm:GLasDecomp} and Corollary~\ref{cor:grothtoquasigroth} recovers  the earlier refinements by J.~Haglund, K.~Luoto, S.~Mason, and S.~van Willigenburg \cite{HLMvW11:QS} of Schur polynomials respectively into  Demazure atoms and into quasiSchur polynomials.

\section{The mesonic bases and their relations}\label{sec:mesons}
In this section, we introduce a new ``kaon'' basis of polynomials. These new polynomials are a simultaneous refinement of both glide polynomials and Lascoux atoms. 
In Section~\ref{sec:lascoux} we will introduce a new ``quasiLascoux'' basis of polynomials, which are a simultaneous coarsening of these two bases. We will moreover find that quasiLascoux polynomials stand in the same relation to Lascoux atoms as glide polynomials do to kaons. See Figure~\ref{fig:K_bases} for a visual representation of the relationships among these various bases.

\subsection{Kaons}

\begin{definition}\label{def:mesonic_glides}
Let $a$ be a weak composition with nonzero entries in positions $n_1 < \dots < n_\ell$. 
The weak komposition $b$ is a {\bf mesonic glide} of $a$ if, for each $1 \leq j \leq \ell$, we have
\begin{itemize}
\item[(G.$1'$)] $a_{n_j} = b_{n_{j-1} + 1} + \dots + b_{n_j} - \ex(b_{n_{j-1}+1}, \dots, b_{n_{j}})$,
\item[(G.$3'$)] the leftmost nonzero entry among $b_{n_{j-1} + 1}, \dots, b_{n_j}$  is black, and
\item[(G.$4'$)] $b_{n_j} \neq 0$.
\end{itemize} 

Equivalently, a weak komposition $b$ is a mesonic glide of $a$ if $b$ can be obtained from $a$ by a finite sequence of the local moves (m.1), (m.2), and (m.3) that never applies (m.1) at positions $n_j-1$ and $n_j$ for any $j$.
\end{definition}

Observe that, in particular, a mesonic glide is a glide that happens to satisfy additional conditions.

\begin{example}\label{ex:mesonic_glide}
Let $a = (0,3,0,2)$. Then $b = (2,1,1,\red{2})$ is a mesonic glide of $a$. 

On the other hand, while $b' = (3,1,0,\red{2})$ is also a glide of $a$, it is not mesonic. To see this fact, observe that $a_{n_1} = a_2 = 3$, while 
\[
b'_{n_{j-1} + 1} + \dots + b'_{n_j} - \ex(b'_{n_{j-1}+1}, \dots, b'_{n_{j}}) = b'_1 + b'_2 - \ex(b'_1, b'_2) = 3 +1 - 0 = 4,
\]
in violation of (G.$1'$).

The reader may check that both $b$ and $b'$ can be obtained from $a$ by a finite sequence of the local moves (m.1), (m.2), and (m.3). However, the reader may also check that $b'$ cannot be so obtained without applying (m.1) at positions $1$ and $2$.
\end{example}

\begin{definition}
Let $a$ be a weak composition. The {\bf kaon} $\kaon_a^{(\beta)}$ is the following generating function for mesonic glides:
\[
\kaon_a^{(\beta)} \coloneqq \sum_b \beta^{\ex(b)} {\bf x}^b,
\]
where the sum is over all mesonic glides of $a$.
\end{definition}

\begin{example}\label{ex:kaon}
Let $a = (0,3,0,2)$. Then the corresponding kaon is
\begin{align*}
\kaon_a^{(\beta)} &= \x^{0302} + \x^{0311} + \x^{1202} + \x^{1211} + \x^{2102} + \x^{2111} \\
 &+ \beta\x^{0312} + \beta\x^{0321} + \beta\x^{1212} + \beta\x^{1221} + \beta\x^{1302} + \beta\x^{1311} \\ 
 &+  \beta\x^{2112} + \beta\x^{2121} + \beta\x^{2202} + \beta\x^{2211} + \beta\x^{3102} + \beta\x^{3111} \\
 &+ \beta^2\x^{1312} + \beta^2\x^{1321} + \beta^2\x^{2212} + \beta^2\x^{2221} + \beta^2\x^{3112} + \beta^2\x^{3121}.
\end{align*}
The reader may enjoy realizing each exponent vector as a mesonic glide of $a$.

Although this example is multiplicity-free, in general kaons have nontrivial coefficients in their monomial expansions. For example, the kaon $\kaon_{002}$ contains the monomial $\beta \x^{111}$ with coefficient $2$, corresponding to the distinct mesonic glides $(1,\red{1},1)$ and $(1,1,\red{1})$ of $(0,0,2)$.
\end{example}

\subsection{Fundamental properties of kaons and the kaon expansion of glide polynomials}

Every glide polynomial $\glide_a$ is a positive sum of kaons.
\begin{proposition}\label{prop:glide2kaon}
For any weak composition $a$, we have
\[
\glide_a^{(\beta)} = \sum_{\substack{b \geq a \\ b^+= a^+ }} \kaon_b^{(\beta)}.
\]
\end{proposition}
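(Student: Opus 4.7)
The plan is to establish a bijection at the level of individual monomials: I will show that a weak komposition $c$ is a glide of $a$ if and only if there is a unique weak composition $b$ with $b \geq a$ and $b^+ = a^+$ such that $c$ is a mesonic glide of $b$. Since $\ex(c)$ and $\x^c$ depend only on $c$, this equivalence immediately yields the desired polynomial identity.

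Given a glide $c$ of $a$, I will construct the required $b$ from an auxiliary \emph{mesonic chunk decomposition} $(i_1 < \cdots < i_\ell)$ of $c$: by this I mean a chunk decomposition satisfying (G.1)--(G.3) together with the additional requirements that $c_{i_j} \neq 0$ for each $j$ and that $i_\ell$ equals the rightmost nonzero position of $c$. Setting $b_{i_j} \coloneqq a_{n_j}$ and $b_k \coloneqq 0$ otherwise, the nonzero positions of $b$ are precisely the $i_j$'s, so the mesonic-glide conditions (G.$1'$), (G.$3'$), (G.$4'$) for $c$ with respect to $b$ reduce to (G.1)--(G.3) together with $c_{i_j} \neq 0$; the condition that $i_\ell$ equals the rightmost nonzero of $c$ ensures that $c$'s support is contained in $b$'s chunks, as required by the moves characterization of mesonic glides. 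Moreover $b^+ = a^+$ by construction, and $b \geq a$ follows from (G.2) (namely $i_j \leq n_j$) via a straightforward partial-sum comparison.

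The heart of the argument, and the main obstacle, is to prove uniqueness of the mesonic chunk decomposition. The requirement that $i_\ell$ equal the rightmost nonzero of $c$ pins down $i_\ell$ outright. For uniqueness of $i_{j-1}$ given $i_j$, suppose that two mesonic decompositions $(i_k)$ and $(i'_k)$ agree for indices $\geq j$ but differ with $i_{j-1} < i'_{j-1}$. Both chunks $[i_{j-1}+1, i_j]$ and $[i'_{j-1}+1, i_j]$ have sum-minus-excess equal to $a_{n_j}$, so the extra positions $[i_{j-1}+1, i'_{j-1}]$ contribute $0$ in sum-minus-excess. Since any black entry contributes a positive value and any red entry with value $r$ contributes $r - 1 \geq 0$, this forces every nonzero entry in the range to be a \red{1}; in particular $c_{i'_{j-1}} = \red{1}$. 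But then the leftmost nonzero entry of the chunk $[i_{j-1}+1, i_j]$ lies in $[i_{j-1}+1, i'_{j-1}]$ (as that subrange contains a nonzero) and is a \red{1}, violating (G.3). This contradiction establishes uniqueness.

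Existence is constructive: starting from any chunk decomposition of $c$ as a glide of $a$, first enlarge $i_\ell$ to the rightmost nonzero position of $c$ (an accounting argument shows that any nonzero entries in $c$ lying strictly beyond the current $i_\ell$ must each be a \red{1}, contributing zero in sum-minus-excess, so this enlargement preserves (G.1) and does not disturb (G.3) since the new entries lie to the right of the existing chunk). Then, for $j$ from $\ell - 1$ down to $1$, shrink $i_j$ to the rightmost nonzero position in its chunk; the trailing zeros are absorbed harmlessly into the following chunk. Each chunk contains at least one nonzero entry because its sum-minus-excess $a_{n_j}$ is positive. For the reverse direction, given any $b$ satisfying the stated hypotheses and any mesonic glide $c$ of $b$, the nonzero positions $m_j$ of $b$ satisfy $m_j \leq n_j$ (else the partial sum of $b$ at $n_j$ would fall short of $a$'s partial sum there), so the sequence $(m_j)$ realizes $c$ as a glide of $a$ with a mesonic chunk decomposition. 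By uniqueness this matches the $b$ from the forward map, so the correspondence is indeed a bijection.
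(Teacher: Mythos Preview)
Your proof is correct and follows essentially the same approach as the paper: both argue via a bijection at the level of weak kompositions, showing that each glide of $a$ is a mesonic glide of a unique $b$ with $b \geq a$ and $b^+ = a^+$, and conversely. Your uniqueness argument (right-to-left induction on the chunk endpoints) is the mirror image of the paper's (which takes the least index where two candidates differ), and your existence step is in fact slightly more careful than the paper's, explicitly adjusting an arbitrary chunk decomposition so that each $c_{i_j} \neq 0$ rather than tacitly assuming this already holds.
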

\begin{proof}
Let $a$ be a weak composition with nonzero entries in positions $n_1 < \dots < n_\ell$.
Suppose $g$ is a glide of $a$. Then there are $0 = i_0 < i_1 < \cdots < i_\ell$ satisfying conditions (G.1), (G.2), and (G.3) of Definition~\ref{def:glide_komposition}. Then $g$ may be obtained from $a$ via a $2$-step process. First, apply (m.1) repeatedly to move each nonzero entry of $a$ from position $n_j$ to position $i_j$. Call the resulting weak composition $b$. Note that $b$ satisfies $b \geq a$ and $b^+=a^+$. Second, apply some sequence of (m.1), (m.2) and (m.3) to obtain the weak komposition $g$ from $b$. In this second step, note that we never apply (m.1) at positions $i_j - 1$ and $i_j$ for any $j$. 

Hence every glide $g$ of $a$ is a mesonic glide of a weak composition $b$ with $b \geq a$ and $b^+= a^+$, and so every term of the left-hand polynomial is a term of the right-hand polynomial. 

Conversely, every mesonic glide of such a weak composition $b$ with $b \geq a$ and $b^+= a^+$ is clearly a glide of $a$. 
Thus to complete the proof, we only need to show that for every glide $g$ of $a$ there is at most one $b$ with $b^+ = a^+$ such that $g$ is a mesonic glide of $b$.
Let $\alpha = a^+ = (\alpha_1,\ldots,\alpha_\ell).$ Suppose that $b,c$ are weak compositions with $b^+ = c^+ = \alpha$ such that $g$ is a mesonic glide of both $b$ and $c$. Say $b$ has nonzero entries in positions $s_1 < \dots < s_\ell$ while $c$ has nonzero entries in positions $t_1 < \dots < t_\ell$.
By the definition of mesonic glide, we know that  \[ \alpha_j = b_{s_j} = g_{s_{j-1}+1} + \ldots + g_{s_j} - {\sf ex}(g_{s_{j-1}+1},\ldots,g_{s_j}), \] the leftmost nonzero entry among $g_{s_{j-1}+1}, \ldots, g_{s_j}$ is black, and $g_{s_j} \neq 0$. In the same way, we have that
\[ \alpha_j = c_{t_j} = g_{t_{j-1}+1} + \ldots + g_{t_j} - {\sf ex}(g_{t_{j-1}+1},\ldots,g_{t_j}), \] the leftmost nonzero entry among $g_{t_{j-1}+1}, \ldots, g_{t_j}$ is black, and $g_{t_j} \neq 0$.
If $s_j = t_j$ for all $j$, then $b= c$ and we are done. Otherwise, there is a least index $i$ such that $s_i \neq t_i$.
Without loss of generality, assume $s_i > t_i$.
Then, 
\begin{align*}
\alpha_i &= g_{s_{i-1}+1} + \dots + g_{s_i} - {\sf ex}(g_{s_{i-1}+1}, \dots, g_{s_i}) \\
&= g_{s_{i-1}+1} + \dots + g_{t_i} + g_{t_i + 1} + \dots +  g_{s_i} - {\sf ex}(g_{s_{i-1}+1}, \dots, g_{t_i}) - {\sf ex}(g_{t_i+1},\dots,g_{s_i}) \\
&=  g_{t_{i-1}+1} + \dots + g_{t_i} - {\sf ex}(g_{t_{i-1}+1}, \dots, g_{t_i}) +  g_{t_i + 1} + \dots +  g_{s_i} - {\sf ex}(g_{t_i+1},\dots,g_{s_i}) \\
&= \alpha_i +   g_{t_i + 1} + \dots +  g_{s_i} - {\sf ex}(g_{t_i+1},\dots,g_{s_i}),
\end{align*}
and so we have \[ 0 = g_{t_i + 1} + \dots +  g_{s_i} - {\sf ex}(g_{t_i+1},\dots,g_{s_i}). \]
This is only possible if each of $g_{t_i +1},\dots, g_{s_i}$ is either 0 or a red \red{1}.
Since $g_{s_i} \neq 0$, there is at least one red \red{1} in this set of entries.
However, the first nonzero entry of $g_{t_i+1},\dots,g_{t_{i+1}}$ is required to be black, a contradiction.
\end{proof}

\begin{theorem}\label{thm:kaon_basis}
The set 
\[
\{ \beta^k \kaon^{(\beta)}_a : k \in \mathbb{Z}_{\geq 0} \text{ and $a$ is a weak composition of length $n$} \} 
\]
is an additive basis of the free $\mathbb{Z}$-module $\Poly_n[\beta]$. Hence, for any fixed $p \in \mathbb{Z}$, 
\[ 
\{ \kaon^{(p)}_a : \text{$a$ is a weak composition of length $n$} \}
\]
is a basis of $\Poly_n$.
\end{theorem}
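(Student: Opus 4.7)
The plan is to bootstrap from the known fact that the glide polynomials form a basis, using Proposition~\ref{prop:glide2kaon} to exhibit a unitriangular change of basis with integer entries independent of $\beta$. Specifically, Pechenik and Searles establish \cite{Pechenik.Searles} that $\{\beta^k \glide_a^{(\beta)}\}$ is a $\mathbb{Z}$-basis of $\Poly_n[\beta]$ (equivalently, that $\{\glide_a^{(\beta)}\}$ is a $\mathbb{Z}[\beta]$-basis), and I will deduce the corresponding statements for kaons by inverting the transition supplied by Proposition~\ref{prop:glide2kaon}.

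First I would observe that, for each strong composition $\alpha$ of length $\ell$, the set $\mathcal{W}(\alpha, n) := \{b : b \text{ a weak composition of length } n, \, b^+ = \alpha\}$ is finite (of cardinality $\binom{n}{\ell}$) and is partially ordered by dominance. Proposition~\ref{prop:glide2kaon} then gives, for each $a \in \mathcal{W}(\alpha, n)$,
\[
\glide_a^{(\beta)} = \kaon_a^{(\beta)} + \sum_{\substack{b \in \mathcal{W}(\alpha, n) \\ b > a}} \kaon_b^{(\beta)}.
\]
The resulting matrix indexed by $\mathcal{W}(\alpha,n)$ has $\{0,1\}$-entries independent of $\beta$, and is unitriangular with respect to any linear extension of the dominance order, hence invertible over $\mathbb{Z}$.

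Inverting this block-diagonal change of basis (with blocks indexed by strong compositions $\alpha$) from the dominance-maximal element down, I would express each $\kaon_a^{(\beta)}$ as an integer combination of glides $\glide_b^{(\beta)}$ with $b \in \mathcal{W}(a^+, n)$ and $b \geq a$, with diagonal coefficient $1$. This shows that the $\mathbb{Z}[\beta]$-span of $\{\kaon_a^{(\beta)}\}$ equals that of $\{\glide_a^{(\beta)}\}$, namely all of $\Poly_n[\beta]$; linear independence of $\{\beta^k \kaon_a^{(\beta)}\}$ then follows from that of $\{\beta^k \glide_a^{(\beta)}\}$ together with the $\mathbb{Z}$-invertibility of the change-of-basis matrix.

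For the second claim, I would fix $p \in \mathbb{Z}$ and specialize $\beta \mapsto p$. Since the entries of the change-of-basis matrix between glides and kaons are integers not depending on $\beta$, the matrix remains unitriangular over $\mathbb{Z}$ after specialization. Hence, since $\{\glide_a^{(p)}\}$ is a $\mathbb{Z}$-basis of $\Poly_n$ (which persists under specialization because each $\glide_a^{(\beta)}$ has a unitriangular monomial expansion $\glide_a^{(\beta)} = x^a + \sum_{b > a} c_{a,b}(\beta) \, x^b$ whose leading coefficient $1$ survives at $\beta = p$), the family $\{\kaon_a^{(p)}\}$ is one as well. The main conceptual content is already packaged in Proposition~\ref{prop:glide2kaon}; no further substantive obstacle arises, the only care required being to maintain $\beta$-independence of the integer transition matrices so that specializations behave properly.
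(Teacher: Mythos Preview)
Your proposal is correct and follows essentially the same approach as the paper: both arguments use Proposition~\ref{prop:glide2kaon} to observe that the transition matrix from glides to kaons is unitriangular over $\mathbb{Z}$ (the paper with respect to lexicographic order, you with respect to dominance order within each block $\mathcal{W}(\alpha,n)$), and then invoke the known fact that glides form a basis of $\Poly_n[\beta]$. Your treatment is somewhat more explicit about the $\beta$-independence of the transition matrix and the block structure, which makes the specialization argument for the second sentence cleaner, but the core idea is the same.
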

\begin{proof}
By Proposition~\ref{prop:glide2kaon}, every glide polynomial can be written as a positive sum of kaons, and indeed the transition matrix is unitriangular with respect to the  lexicographic total order on weak compositions. Hence, the transition matrix is invertible over $\mathbb{Z}$, and the theorem follows from the fact that glide polynomials are an additive basis of $\Poly_n[\beta]$, as shown in \cite[Theorem~2.6]{Pechenik.Searles}.
\end{proof}

A homogeneous basis $\particle_a$ of $\Poly_n$ called \emph{fundamental particles} was introduced in \cite{Searles}. This basis is a common refinement of fundamental slides and Demazure atoms. We will show that the kaon basis plays the analogous role for glide polynomials and Lascoux atoms.

\begin{proposition}\label{prop:pions_are_specialized_kaons}
The fundamental particles $\particle_a$ are the $\beta=0$ specialization of kaons:  \[\particle_a = \kaon_a^{(0)}.\]
\end{proposition}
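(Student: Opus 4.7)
The plan is to identify $\particle_a$ and $\kaon_a^{(0)}$ as generating functions over the same combinatorial set of weak compositions.

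First I would unpack the right-hand side. Setting $\beta = 0$ in
\[
\kaon_a^{(\beta)} = \sum_b \beta^{\ex(b)} {\bf x}^b
\]
kills every term with $\ex(b) \geq 1$, leaving exactly those mesonic glides of $a$ with no red entries. Such a mesonic glide is an ordinary weak composition $b$, and specializing Definition~\ref{def:mesonic_glides} to this case is straightforward: condition (G.$3'$) becomes automatic (the leftmost nonzero of a red-free block is certainly black), condition (G.$1'$) becomes the plain equality $a_{n_j} = b_{n_{j-1}+1} + \dots + b_{n_j}$, and condition (G.$4'$) persists as $b_{n_j} \neq 0$. Equivalently, via the local-moves characterization, $b$ can be obtained from $a$ by a finite sequence of moves (m.1) and (m.2) that never applies (m.1) at positions $(n_j - 1, n_j)$ for any $j$, where $n_1 < \dots < n_\ell$ are the positions of the nonzero entries of $a$.

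The second step is to match this description against the combinatorial definition of the fundamental particle $\particle_a$ from \cite{Searles}. The particle $\particle_a$ is defined there as the generating function over precisely the same family of weak compositions (expressed there using the analogous local-move or block-sum language without any $\beta$-deformation), so a term-by-term comparison of monomials gives $\particle_a = \kaon_a^{(0)}$.

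The main (and essentially only) obstacle I anticipate is notational: \cite{Searles} may present particles through a slightly different but equivalent combinatorial gadget. If the two descriptions are not literally identical, I would supply a weight-preserving bijection between the two indexing sets in the spirit of Remark~\ref{rmk:svsky}. As a sanity check, one may alternatively observe that, by Proposition~\ref{prop:glide2kaon} at $\beta = 0$, one has $\slide_a = \sum_{b \geq a,\ b^+ = a^+} \kaon_b^{(0)}$, which matches the analogous decomposition $\slide_a = \sum_{b \geq a,\ b^+ = a^+} \particle_b$ established in \cite{Searles}; since both transitions are unitriangular in the lexicographic order on weak compositions (with leading term $\x^a$), induction forces $\kaon_b^{(0)} = \particle_b$ for every $b$, reconfirming the proposition.
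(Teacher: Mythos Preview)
Your proposal is correct and follows essentially the same approach as the paper: the paper's proof is simply ``This is clear from the definitions of the two families of polynomials,'' and your argument is a more explicit unpacking of exactly that comparison. Your added unitriangularity sanity check via Proposition~\ref{prop:glide2kaon} is a valid independent confirmation but is not needed.
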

\begin{proof}
This is clear from the definitions of the two families of polynomials.
\end{proof}

\begin{remark}
Proposition~\ref{prop:pions_are_specialized_kaons} motivates our choice of the name `kaon' for these polynomials. In high energy physics, the `$K$' fundamental particles are the \emph{K-mesons} or \emph{kaons}. By analogy, perhaps, the cohomological specialization $\pi_a = \kaon_a^{(0)}$ could be named for the lighter analogue of the $K$-meson, the \emph{$\pi$-meson} or \emph{pion}. These mesons play a role in the structural integrity of atomic nuclei; somewhat analogously, we will show momentarily that the kaon polynomials decompose the Lascoux atoms, thereby controlling their structure.
\end{remark}

\begin{remark}
In light of Proposition~\ref{prop:pions_are_specialized_kaons}, setting $\beta = 0$ in Proposition~\ref{prop:glide2kaon} recovers \cite[Proposition~4.7]{Searles} on the expansion of fundamental slide polynomials into fundamental particles. Note that although the $K$-theoretic deformations of both families of polynomials are significantly larger, the matrix of basis change from Proposition~\ref{prop:glide2kaon} is exactly the same as for fundamental slides into fundamental particles. Taking $p = 0$ in Theorem~\ref{thm:kaon_basis} recovers \cite[Proposition~4.6]{Searles} as a special case.
\end{remark}

The kaon basis does not have positive structure coefficients. Nonetheless, we conjecture the following:

\begin{conjecture}\label{conj:kaon_pos_product}
For any weak compositions $a$ and $b$, the product
\[ \kaon_a \cdot \glide_b \]
of a kaon and a glide polynomial expands positively in the kaon basis.
\end{conjecture}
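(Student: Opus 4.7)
The plan is to establish this conjecture by producing an explicit combinatorial rule for $\kaon_a \cdot \glide_b$ and verifying its positivity directly. Expanding both factors via their defining generating functions, the product decomposes as a sum over pairs $(u,v)$, where $u$ is a mesonic glide of $a$ and $v$ is a glide of $b$, with each pair contributing $\beta^{\ex(u)+\ex(v)} \mathbf{x}^{u+v}$. The objective is to organize these pairs into groups corresponding to mesonic glides of various weak compositions $c$, so that the product becomes $\sum_c C_{a,b}^c(\beta)\, \kaon_c$ with each $C_{a,b}^c(\beta) \in \mathbb{Z}_{\geq 0}[\beta]$.

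A natural first step is to verify the conjecture in small cases to identify the structure of the expansion. For instance, direct computation in two variables yields
\[
\kaon_{(0,1)} \cdot \glide_{(0,1)} = \kaon_{(0,2)} + \beta \kaon_{(1,2)} + \beta^2 \kaon_{(2,2)}.
\]
Compiling such examples should suggest an ``insertion'' algorithm that merges a mesonic glide of $a$ with a glide of $b$ to produce a mesonic glide of some weak composition $c$, possibly carrying auxiliary data recording multiplicities. Ideally, the algorithm would simultaneously track how the total excess $\ex(u) + \ex(v)$ distributes between the resulting mesonic glide and the auxiliary data, so that each coefficient $C_{a,b}^c(\beta)$ is manifestly a nonnegative polynomial in $\beta$.

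Two complementary strategies appear promising. The first is an induction on $|a|+|b|$, or on the number of nonzero entries of $a$, with base cases in which $a$ has only a single nonzero entry, or in which the nonzero entries of $a$ occupy the rightmost positions so that $\kaon_a = \glide_a$ and the conjecture reduces to a product statement among glide polynomials. The second is to invoke Proposition~\ref{prop:glide2kaon} to write $\glide_b = \sum_{d \geq b,\, d^+ = b^+} \kaon_d$, and then to control the kaon products $\kaon_a \cdot \kaon_d$ in aggregate rather than individually. Since these individual products are \emph{not} positive in the kaon basis, a key ingredient in the second strategy would be an explicit sign-reversing involution on pairs of mesonic glides (of $a$ and of the various $d$) whose fixed points encode the terms of the desired positive expansion.

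The principal obstacle is precisely the sentence immediately preceding the conjecture: kaon products themselves do not expand positively in kaons. Any successful proof must therefore genuinely exploit the glide structure of the second factor, and cannot treat it merely as an abstract sum of kaons. Concretely, a combinatorial rule must reconcile two conflicting degrees of freedom: the mesonic glide of $a$ imposes rigidity at the terminal position of each block of $a$, forbidding the move (m.1) across the boundary from position $n_j - 1$ to position $n_j$, whereas the glide of $b$ admits the full set of moves (m.1), (m.2), and (m.3). Designing the right matching between these two kinds of combinatorial data, or equivalently identifying the correct sign-reversing involution in the second strategy, appears to be the crux of the difficulty.
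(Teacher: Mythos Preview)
The statement you are attempting to prove is \emph{Conjecture}~\ref{conj:kaon_pos_product}; the paper does not prove it. The authors only report computational verification for weak compositions $a,b$ with at most three zeros and $|a|,|b|\leq 5$, and remark that even the $\beta=0$ specialization (particle times slide expands positively in particles) is apparently new and open. There is therefore no ``paper's own proof'' against which to compare your proposal.

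Your write-up is not a proof but an outline of possible strategies, and you candidly acknowledge as much in your final two paragraphs. None of the proposed ingredients is carried out: you do not construct the insertion algorithm, you do not execute the induction, and you do not exhibit the sign-reversing involution. Your identification of the main obstacle is correct and agrees with the paper's own comment that kaons do not have positive structure constants, so the glide structure of the second factor must be used essentially. But until one of your strategies is actually realized, the conjecture remains open. If you wish to make progress, the $\beta=0$ case would already be a new result and is likely more tractable; you might also try to settle the case where $a$ has a single nonzero part, since then $\kaon_a$ has a particularly simple description.
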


For example, we have
\begin{align*}
\kaon_{(2,0,1)} \cdot \glide_{(1,0,2)} = &\kaon_{(3,0,3)} + \beta\kaon_{(3,1,3)} + \beta\kaon_{(3,2,2)} \\
&+ \beta^2 \kaon_{(3,2,3)} + \beta^2 \kaon_{(3,3,2)}. 
\end{align*} 
We have computationally verified Conjecture~\ref{conj:kaon_pos_product} for all weak compositions $a,b$ with at most 3 zeros and $|a|, |b| \leq 5$. To our knowledge, Conjecture~\ref{conj:kaon_pos_product} is new even in the special $\beta=0$ case of the fundamental particle expansion of the product of a fundamental particle by a fundamental slide polynomial.

\subsection{The kaon expansion of a Lascoux atom}

\begin{definition}\label{def:meson_highest}
Let $a$ be a weak composition and $T \in \LASSF(a)$. We say $T$ is {\bf meson-highest} if, for every integer $i$ appearing in $T$, either 
\begin{itemize}
\item the leftmost $i$ is in the leftmost column and is an anchor, or 
\item there is a $i^\uparrow$ in some column weakly to the right of the leftmost $i$ and in a different box, where $i^\uparrow$ is the smallest label greater than $i$ appearing in $T$.
\end{itemize}
In light of the following Theorem~\ref{thm:LascouxAtom2kaon}, we write $\latok(a)$ for the set of all meson-highest $T \in \LASSF(a)$.
\end{definition}

\begin{theorem}\label{thm:LascouxAtom2kaon}
For any weak composition $a$, we have
\begin{equation}\label{eq:LascouxAtom2kaon}
\lascouxatom_a^{(\beta)} = \sum_{T \in \latok(a)} \beta^{|T|-|a|}\kaon_{\wt(T)}^{(\beta)}.
\end{equation}
In particular, every Lascoux atom $\lascouxatom_a$ is a positive sum of kaons.
\end{theorem}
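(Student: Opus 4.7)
The plan is to construct a weight-and-excess-preserving bijection
\[\Phi \colon \LASSF(a) \longrightarrow \bigsqcup_{T \in \latok(a)} \{ b : b \text{ is a mesonic glide of } \wt(T)\},\]
sending $F \mapsto (T(F), \wt(F))$. Once such a $\Phi$ exists, the identity \eqref{eq:LascouxAtom2kaon} follows by collecting terms: each pair $(T,b)$ on the right contributes $\beta^{|T|-|a|+\ex(b)}\x^b$, and the mesonic-glide identity $|b|-\ex(b) = |\wt(T)| = |T|$ (a consequence of (G.$1'$)) converts this into $\beta^{|b|-|a|}\x^b = \beta^{\ex(F)}\x^{\wt(F)}$. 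The ``in particular'' clause is then immediate, as each coefficient appearing in the resulting kaon expansion is a single monomial in $\beta$ with coefficient $1$.

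The map $F \mapsto T(F)$ would be defined via a \emph{raising} operation $R$ on $\LASSF(a) \setminus \latok(a)$. Given such an $F$, I would locate the smallest label $i$ violating the meson-highest condition of Definition~\ref{def:meson_highest}, and promote an appropriate copy of $i$ (together with any free $i$ sharing its cell) to $i^\uparrow$, the smallest label $>i$ already appearing in $F$. I would then verify that $R(F)\in\LASSF(a)$ by checking (S.1)--(S.5) are preserved, observe that $R$ strictly increases $\wt$ in a suitable partial order so that iteration terminates, and prove confluence, so that the resulting $T(F) \in \latok(a)$ is well-defined and independent of choices. Each raising step would be matched to the inverse of one of the local moves (m.1), (m.2), (m.3) acting on the weak composition $\wt(F)$, with a red coloring at position $n_j$ of the resulting mesonic glide recording the creation of a free entry sharing a box with its anchor. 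The restriction in Definition~\ref{def:mesonic_glides} that (m.1) not apply at positions $n_j-1, n_j$ would correspond precisely to the meson-highest clause demanding that the leftmost $i$ be either an anchor in column $1$ or lie weakly below an $i^\uparrow$ in a different box.

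For the inverse direction, each pair $(T,b)$ determines a unique \emph{lowering} sequence on $T$: the moves (m.1)--(m.3) carrying $\wt(T)$ step-by-step to $b$ are translated into local modifications of the labels and anchor/free designations in $T$. The principal obstacle will be the detailed case analysis needed to show that raising and lowering preserve the skyline axioms---especially the triple condition (S.3) and the free-entry placement rule (S.4)---and to establish both confluence of $R$ and invariance of the lowering output under reorderings of commuting moves. Organising the argument around the local column configurations in the neighborhood of the label being moved should make these verifications tractable and make transparent the correspondence between the ``red versus black'' coloring on $b$ and the ``free versus anchor'' status of the corresponding entries of $F$.
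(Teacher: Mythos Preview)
Your overall architecture matches the paper's: partition $\LASSF(a)$ into fibers over $\latok(a)$ via a raising (destandardization) map, then identify each fiber with the set of mesonic glides of the weight of its meson-highest element. The paper carries this out via Lemmas~\ref{lem:destand_of_LATabs} and~\ref{lem:kaon_for_destandardized_wt}.

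There is, however, a genuine gap in your raising operation. You propose to promote ``an appropriate copy of $i$'' to $i^\uparrow$, i.e.\ to act on a \emph{single} instance of $i$. This does not match the inverse of any of the moves (m.1)--(m.3). Concretely, take $a=(0,0,3)$ and let $F$ be the filling whose row~$3$ reads $3,2,2$, so $\wt(F)=(0,2,1)$. Promoting the column-$2$ copy of $2$ to $3$ gives weight $(0,1,2)$; but the passage $(0,1,2)\to(0,2,1)$ is not of the form $0p\Rightarrow p0$, $0p\Rightarrow qr$, or $0p\Rightarrow q\red{r}$. So your claim that ``each raising step would be matched to the inverse of one of the local moves'' fails as stated. (Also, since a box contains distinct entries, ``any free $i$ sharing its cell'' is vacuous; there is at most one $i$ per box.)

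The paper's remedy is to replace \emph{every} $i$ in $F$ by $i{+}1$ (not by $i^\uparrow$) in a single step, deleting a duplicate if one is created in some box. Then the weight changes at positions $i,i{+}1$ exactly as $(q,r)\mapsto(0,q{+}r)$ or $(q,\red{r})\mapsto(0,q{+}r{-}1)$, which are precisely the inverses of (m.1)/(m.2) or (m.3). With this definition the process is deterministic (always act on the least violating $i$), so no confluence argument is needed; the checks of (S.1)--(S.5) under this bulk replacement are exactly Lemma~\ref{lem:destand_of_LATabs}, and the bijection with mesonic glides is Lemma~\ref{lem:kaon_for_destandardized_wt}. Once you replace your single-copy promotion with this all-at-once replacement, the rest of your outline goes through essentially as in the paper.
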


To prove Theorem~\ref{thm:LascouxAtom2kaon}, we must first develop properties of a {\bf destandardization} map on $\LASSF(a)$. Fix $T \in \LASSF(a)$. Consider the least integer $i$ with the property that 
\begin{itemize}
\item the leftmost $i$ in $T$ is not an anchor in the leftmost column, and
\item this leftmost $i$ has no $i^\uparrow$ weakly to its right in a different box;
\end{itemize} replace every $i$ in $T$ with an $i + 1$. (If this results in two instances of $i+1$ in a single box, delete one.) Repeat this replacement process until
no further replacements can be made: the final result is the destandardization $\destand(T)$. (This algorithm necessarily terminates, as we only perform replacement on labels $i$ that are strictly less than the maximum entry $k$ of $T$; this is because $k$ is guaranteed to appear as an anchor in the leftmost column of $T$.) For an example of these notions, see Figure~\ref{fig:lascouxatom}.

\begin{figure}[ht]
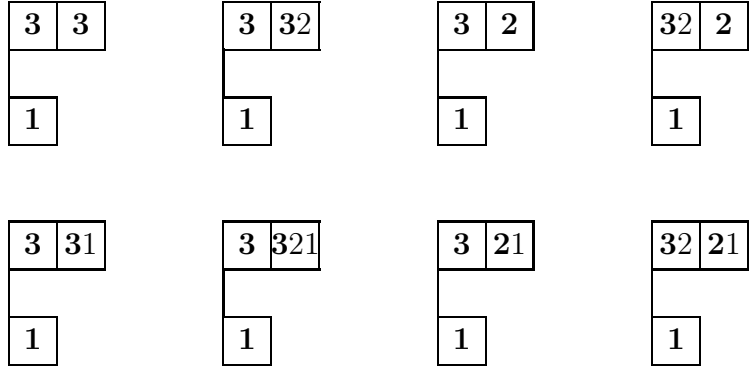

  \begin{center}
    \begin{displaymath}
      \begin{array}{c@{\hskip2\cellsize}c@{\hskip2\cellsize}c@{\hskip2\cellsize}c@{\hskip2\cellsize}c}
	\begin{ytableau}
	\none  & \textbf{3} & \textbf{3} \\
	\none \vline \\
	\none  & \textbf{1}  
	\end{ytableau} \vspace{1cm} &
	\begin{ytableau}
	\none  & \textbf{3} & \textbf{3}2 \\
	\none \vline \\
	\none  & \textbf{1}  
	\end{ytableau} 	&

	\begin{ytableau}
	\none  & \textbf{3} & \textbf{2} \\
	\none \vline \\
	\none  & \textbf{1}  
	\end{ytableau} 	&
	\begin{ytableau}
	\none  & \textbf{3}2 & \textbf{2} \\
	\none \vline \\
	\none  & \textbf{1}  
	\end{ytableau} 	\\
	\begin{ytableau}
	\none  & \textbf{3} & \textbf{3}1 \\
	\none \vline \\
	\none  & \textbf{1}  
	\end{ytableau} 	&
	\begin{ytableau}
	\none  & \textbf{3} & \textbf{3}21 \\
	\none \vline \\
	\none  & \textbf{1}  
	\end{ytableau} &

	\begin{ytableau}
	\none  & \textbf{3} & \textbf{2}1 \\
	\none \vline \\
	\none  & \textbf{1}  
	\end{ytableau} 	&

	\begin{ytableau}
	\none  & \textbf{3}2 & \textbf{2}1 \\
	\none \vline \\
	\none  & \textbf{1}  
	\end{ytableau} 	&
      \end{array}
    \end{displaymath}
    \caption{The eight elements of $\LASSF(102)$. The set $\latok(102)$ consists of two fillings, specifically the leftmost filling in each of the two rows.  Each of the eight illustrated fillings destandardizes to the leftmost filling in its row.}\label{fig:lascouxatom}
  \end{center}
\end{figure}

\begin{remark}
In fact, the order in which we perform replacements does not affect the resulting destandardization. Nonetheless, it is convenient to fix the explicit replacement order chosen above.
\end{remark}

\begin{lemma}\label{lem:destand_of_LATabs}
Let $a$ be a weak composition.
If $T \in \LASSF(a)$, then $\destand(T) \in \latok(a)$. Moreover, destandardization is a retraction onto $\latok(a)$, as we have $\destand(T) = T$ if and only if $T \in \latok(a) \subseteq \LASSF(a)$.
\end{lemma}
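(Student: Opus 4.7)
The plan is to prove both assertions by induction on the number of replacement steps carried out by the destandardization algorithm. Specifically, I would show that (i) each single replacement step preserves membership in $\LASSF(a)$, and (ii) the algorithm halts exactly when the current filling lies in $\latok(a)$. Granted these two facts, the first assertion follows because $\destand(T)$ is by definition the output at halt, and the ``iff'' statement follows because zero replacement steps occur exactly when no integer in $T$ violates the meson-highest condition.

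Fact (ii) is immediate from the halting criterion, so the real work lies in (i). Fix $T' \in \LASSF(a)$ and suppose the algorithm selects $i$ as the least violating value and replaces every $i$ by $i+1$ (collapsing any resulting double $i+1$ within a single box) to produce $T''$. I would verify each of (S.1)--(S.5) in turn. Condition (S.5) holds because any occurrence of $i$ in column $1$ would itself be the leftmost $i$, yet the trigger hypothesis insists the leftmost $i$ is not an anchor in column $1$; since anchors in column $1$ equal their row index and free entries there are strictly smaller, no $i$ can appear in column $1$ at all. Condition (S.1) holds because if some column of $T'$ contained both an $i$ and an $i+1$ in distinct boxes, then $i^\uparrow = i+1$ would lie weakly right of the leftmost $i$ in a different box, contradicting the trigger. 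Condition (S.2) follows from a short case analysis: the minimum and maximum of each box each rise by at most $1$, and the inequality $\min \ge \max$ between row-adjacent boxes is maintained by straightforward bookkeeping using (S.1) on $T'$.

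Conditions (S.3) and (S.4) require more care. For (S.4), the key point is that replacement preserves the relative order of all labels, and the anchor-value multiset of each column changes only by an anchor of value $i$ being replaced by an anchor of value $i+1$, with no previously existing anchor of value $i+1$ in that column (by the (S.1) argument above). This suffices to show that the cell designated for each free entry by (S.4) is unchanged. For (S.3), triples not involving an anchor of value $i$ are entirely unchanged, and for most remaining configurations routine substitution into the inversion inequalities $\gamma < \alpha \leq \beta$ or $\alpha \leq \beta < \gamma$ preserves the type. The delicate cases are triples (Type A or Type B) containing both an $i$-anchor and an $(i+1)$-anchor, where the common rise to $i+1$ threatens a strict inequality. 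In each such configuration I would use the geometric position of the $(i+1)$-anchor together with (S.1) on $T'$ to exhibit this $i+1$, equal to $i^\uparrow$, as lying weakly right of the leftmost $i$ and in a different box -- again contradicting the trigger.

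The main obstacle is the case analysis for (S.3): methodically producing, in each Type A and Type B configuration, the correct geometric witness $i+1$ of the meson-highest condition for $i$, thereby contradicting the trigger. Once this case analysis is in hand, the inductive scheme closes quickly.
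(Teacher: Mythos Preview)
Your approach is essentially identical to the paper's: reduce to a single replacement step and verify (S.1)--(S.5) case by case, with the same geometric contradictions against the trigger hypothesis in the (S.3) analysis. One small correction to your (S.5) argument: the claim that ``no $i$ can appear in column $1$ at all'' is too strong, since $i$ can appear there as a free entry; however, (S.5) concerns only anchors, and since you correctly argue that no \emph{anchor} $i$ lies in column $1$, replacing a free $i$ by $i+1$ leaves the first-column anchors untouched and (S.5) is preserved.
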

\begin{proof}
Fix the weak composition $a$. By definition, if $T \in \latok(a)$, then $\destand(T) = T$. Moreover, if $T \notin \latok(a)$, then by definition $\destand(T) \neq T$. Hence, the third sentence of the lemma is clear.

It remains to establish the second sentence of the lemma, so fix $T \in \LASSF(a)$. It is enough to show that $\destand(T) \in \LASSF(a)$, for then $\destand(T) \in \latok(a)$ follows easily, as the destandardization algorithm does not terminate until the extra conditions defining $\latok(a)$ as a subset of $\LASSF(a)$ are satisfied. Indeed, since destandardization is defined as a sequence of replacements, it is enough by induction to show that any single such replacement produces an element of $\LASSF(a)$. 

Suppose we apply replacement to the letters $i$ and the result is $T'$. Then, by assumption, the leftmost $i \in T$
\begin{itemize}
\item is not an anchor in the leftmost column of $T$, and
\item does not have an $i^\uparrow$ weakly to its right in $T$ and in a different box.
\end{itemize}
We want to show that $T'$ satisfies the conditions (S.1)--(S.5).

\smallskip
\noindent
{\bf (S.1):}
If there is no column of $T$ containing both $i$ and $i+1$, then it is clear that $T'$ satisfies (S.1). Hence, suppose column $c$ of $T$ contains both $i$ and $i+1$. Then, $i^\uparrow = i+1$. Since $T$ has then no $i+1$ weakly to the right of the leftmost $i$, $c$ must be the column of the leftmost $i$. Moreover, $i$ and $i+1$ must appear in the same box $\bbb$ of column $c$ in $T$.
Thus, replacement results in two instances of $i+1$ in $\bbb$, one of which we then delete by construction. Thus, $T'$ has no repeated entries in any column.

\smallskip
\noindent
{\bf (S.2):}
If row $i$ of $T$ contains an entry $i$, then by (S.2) and (S.5) for $T$, row $i$ has $i$ as an anchor in the first column. Thus, in this case, the leftmost $i$ is an anchor in the first column, contradicting our assumptions on the number $i$.

Therefore by (S.2) and (S.5) for $T$, every entry $i$ in $T$ is in a row with an index $j$ strictly greater than $i$. Moreover, for each such $j > i$, we have by (S.2) for $T$ that all labels
strictly left of the leftmost $i$ in row $j$ are strictly greater than $i$. Hence, replacing every $i$ in $T$ with
with $i + 1$ preserves the rows being weakly decreasing.

\smallskip
\noindent
{\bf (S.3):}
To see that no type A coinversion triples appear in $T'$, suppose first that $T$ has a type A inversion triple
with $\gamma < \alpha \leq \beta$. This could become a coinversion triple in $T'$ only if $\gamma = i$ and $\alpha = i +1 = i^\uparrow$. However, in this case, $T$ has $i$ and $i^\uparrow$ in distinct boxes of the same column, contradicting our assumptions on the number $i$.

Now, suppose instead that $T$ has a type A inversion triple with $\alpha \leq \beta < \gamma$. This could become become a coinversion triple in $T'$ only if $\gamma = i+1 = i^\uparrow$ and $\beta = i$. However, in this case, $T$ has $i+1$ appearing strictly right of $i$, again contradicting our assumptions on the number $i$.

To see that no type B coinversion triples appear in $T'$, suppose first that $T$ has a type B inversion triple
with $\gamma < \alpha \leq \beta$. This could become a coinversion triple in $T'$ only
if $\gamma = i$ and $\alpha =  i + 1 = i^\uparrow$. 
However, then $T$ would have an $i^\uparrow$ strictly right of an $i$, contradicting our assumptions on $i$. 

Finally, suppose $T$ has a type B inversion triple with $\alpha \leq \beta < \gamma$. This could become a coinversion triple in $T'$ only if $\gamma =  i+1 = i^\uparrow$ and $\beta = i$. However, then $T$ would have an $i$ and an $i+1$ in distinct boxes of the same column, again contradicting our assumptions on $i$. 

\smallskip
\noindent
{\bf (S.4):}
If a free entry $i$ of $T$ becomes a free entry $i+1$ of $T'$ and is not deleted, then its anchor entry $j$ is larger than $i+1$ in both $T$ and $T'$. In particular, since $j$ was the smallest anchor entry in this column accepting a free entry $i$ in $T$ (by (S.4) for $T$), $j$ is still the smallest anchor entry accepting a free entry $i+1$ in $T'$.

If an anchor entry $i$ of $T$ becomes an anchor entry $i+1$ of $T'$, then since any other anchor entry in this column is either greater than $i+1$ or smaller than $i$, any free entries in the cell of this anchor entry are still with the smallest possible anchor entry. Any other free entries in a column where an anchor entry $i$ becomes an anchor entry $i+1$ are also still with the smallest possible anchor entry, again since other anchor entries in this column are either greater than $i+1$ or smaller than $i$.

\smallskip
\noindent
{\bf (S.5):} By construction, the replacement operation taking $T$ to $T'$ does not affect the anchor entries in the first column.
\end{proof}

\begin{lemma}\label{lem:kaon_for_destandardized_wt}
Let $a$ be a weak composition and $S \in \latok(a)$. Then 
\[
\kaon_{\wt(S)} = \sum_{T \in \destand^{-1}(S)} \beta^{|T|-|S|} \x^{\wt(T)}.
\]
\end{lemma}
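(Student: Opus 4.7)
The plan is to prove the lemma by constructing an explicit weight-preserving bijection
\[
\phi \colon \{\text{mesonic glides of } \wt(S)\} \longrightarrow \destand^{-1}(S),
\]
satisfying $\wt(\phi(b)) = b$ (as weak compositions, forgetting colors) and $|\phi(b)| - |S| = \ex(b)$; matching monomial contributions on the two sides then yields the claimed identity. Let $v_1 < v_2 < \cdots < v_m$ be the distinct labels appearing in $S$, so that $\wt(S)$ is supported on the positions $v_j$ with $\wt(S)_{v_j} = c_j$ copies.

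The first step is to describe $\destand^{-1}(S)$ structurally: by inspecting the replacement algorithm, the label set of any $T \in \destand^{-1}(S)$ partitions into blocks $B_1, \ldots, B_m$ with $\max B_j = v_j$ and $B_j \subseteq (v_{j-1}, v_j]$ (where $v_0 := 0$), and destandardization coalesces each $B_j$ to $v_j$ while deleting entries that would violate (S.1). The total number of deletions is exactly $|T| - |S|$. Under $\phi$, the window $(b_{v_{j-1}+1}, \ldots, b_{v_j})$ of a mesonic glide $b$ will encode the block $B_j$ of $T = \phi(b)$: the nonzero positions of the window give the labels in $B_j$, the values record multiplicities $\wt(T)_u$, and the red positions specify where intra-block mergers occur during destandardization.

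For the forward construction, I would downgrade the $c_j$ anchor-$v_j$ cells of $S$ to anchors with labels from $B_j$ dictated by the black contributions to each window, and insert the free entries corresponding to the red contributions into the cells uniquely forced by (S.4) together with the row- and column-constraints. I would then verify the resulting $T$ satisfies (S.1)--(S.5) by a case analysis mirroring that of Lemma~\ref{lem:destand_of_LATabs}. The conditions (G.$3'$) and (G.$4'$) translate directly into the crucial structural constraints: the leftmost nonzero in each window being black guarantees that the basement anchor of each block survives (so (S.5) holds), and the last position being nonzero guarantees that the label $v_j$ itself still appears in $T$.

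For the inverse direction, given $T \in \destand^{-1}(S)$, I would read off $b := \wt(T)$ and color position $u$ red precisely when the entry labeled $u$ in $T$ participates in a merger during destandardization and black otherwise. Simulating the algorithm shows $b$ is a mesonic glide of $\wt(S)$, and verifying mutual inverseness reduces to tracking how the local moves (m.2) and (m.3) correspond step-for-step to the individual replacement-with-deletion operations. The principal obstacle is the forward construction: precisely specifying the cell placements and verifying semistandardness, especially the inversion-triple condition (S.3), is the most delicate part. Indeed, as the example of $\wt(S) = (0,0,3)$ already shows, distinct colorings of the same underlying weak composition must produce structurally distinct fillings, so the free-entry placement rule has to be genuinely deterministic in both directions — this is analogous to, but subtler than, the case analysis in the proof of Lemma~\ref{lem:destand_of_LATabs}.
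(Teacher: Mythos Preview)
Your overall strategy---a weight-preserving bijection between mesonic glides of $\wt(S)$ and $\destand^{-1}(S)$---is exactly the paper's, and your inverse direction is essentially the paper's colored weight $\kwt(T)$. Be careful, though: your coloring rule (``position $u$ red when the entry labeled $u$ in $T$ participates in a merger'') is ambiguous and, in its most natural reading, wrong. If $T$ has a box containing $\{2,4\}$ which destandardizes via $2\mapsto 3\mapsto 4$ with the merger occurring at the second step, both the entry originally labeled $2$ and the one originally labeled $4$ ``participate'', yet coloring position $2$ red would violate (G.$3'$). The paper's unambiguous rule is: color position $i{+}1$ red whenever the replacement $i\mapsto i{+}1$ causes a deletion.

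The genuine gap is the forward map. ``Downgrade the anchor-$v_j$ cells'' and ``insert free entries into cells uniquely forced by (S.4)'' does not determine a filling: you have not said which cell containing $v_j$ receives which new label, (S.4) constrains placement \emph{within} a column but does not pick the column, and not every instance of $v_j$ in $S$ is an anchor in the first place. The paper supplies the missing ingredient as an explicit placement rule: from the $j$th window one builds a weakly increasing string $\str_j$ of labels (a black $b_k$ contributes $b_k$ copies of $k$; a red $b_k$ contributes one red $k$ followed by $b_k-1$ black $k$'s), then walks through the cells of $S$ containing $v_j$ from rightmost to leftmost, depositing each black letter in a fresh cell and each red letter in the same cell as its predecessor. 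With this construction the verification of (S.1)--(S.5) is far easier than the case analysis you anticipate: the labels replacing each $v_j$ all lie in the interval $(v_{j-1},v_j]$, so every order relation present in $S$ is automatically preserved; and (S.5) holds because the \emph{last} letter of $\str_j$ is $v_j$ itself, which is guaranteed by (G.$4'$)---you have the roles of (G.$3'$) and (G.$4'$) for this point reversed.
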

\begin{proof}
We must establish a weight-preserving bijection between mesonic glides of $\wt(S)$ and fillings $T \in \destand^{-1}(S)$.

Fix $T \in \destand^{-1}(S)$. Define the {\bf colored weight} $\kwt(T)$ of $T$ to be the weak komposition obtained by coloring the $(i+1)$st entry of $\wt(T)$ \red{red} if any $i+1$ is deleted after replacing every $i$ with $i+1$ during a step of destandardization. (Note that at most one $i+1$ can be so deleted, as otherwise before replacement there would be two boxes, each containing both $i$ and $i+1$; in such a case, there would be an $i+1$ strictly right of an $i$, in violation of the rules for when to perform replacement.)

We claim that $\kwt(T)$ is a mesonic glide of $\wt(S)$. Consider the process of destandardization of $T$ to produce $S$. Each time we replace every $i$ in $T'$ by $i+1$ to produce $T''$, we change the $i$th and $(i+1)$st entries of the colored weight by 
\[(q,\red{r}) \mapsto (0, q+r-1),\] if a duplicate $i+1$ is deleted, or 
\[(q,r) \mapsto (0, q+r),\]
 if not. Since these are the inverses of the local move (m.3) in the first case and either (m.1) or (m.2) in the second case, it follows then that $\kwt(T)$ is a glide of the weak composition $\wt(S)$. 
 
Let $\wt(S)$ have nonzero entries in positions $n_1 < \dots < n_\ell$.  A local change to the colored weight
\[
(q,0) \mapsto (0,q)
\]
in positions $n_j -1$ and $n_j$ for some $j$ would correspond to a step of destandardization replacing every instance of $n_j-1$ with $n_j$ in a $T'$ that contains no label $n_j$. Since $S$ is meson-highest, the entries $n_j$ of $S$ satisfy the meson-highest condition. Since $T'$ destandardizes to $S$, then entries $n_j -1$ of $T'$ therefore also satisfy the meson-highest condition. This contradicts the application of such a destandardization step to $T'$.
Hence, $\kwt(T)$ is a mesonic glide of $\wt(S)$.

For the other direction, let $b$ be a mesonic glide of the weak composition $\wt(S)$. We construct the unique $T\in \destand^{-1}(S)$ such that $\kwt(T)=b$ as follows. Suppose $\wt(S)$ has nonzero entries in positions $n_1< \cdots < n_\ell$.

Begin with the smallest entries of $S$, i.e., the $\wt(S)_{n_1}$ instances of the entry $n_1$ in $S$. Consider the sequence $s_1 = (1, \dots, i_1)$ of positions of entries associated (via (G.1)) to $n_1$ in the mesonic glide $b$. From this sequence, we construct a string $\str_1$ by appending letters to the empty word as follows. 
Reading $s_1$ from left to right, for each $k \in s_1$, consider the entry $b_k$. If $b_k$ is a black entry, append $b_k$ black $k$'s to the end of the string; if $b_k$ is a red entry, append one red $\red{k}$ followed by $b_k-1$ black $k$'s.
For example, if $\wt(S)= (0,0,0,0, 4, 3, \ldots)$ and $b = (1,0,{\color{red}2},0,2, 3,\ldots )$, then $\str_{1} = 1{\color{red}3}355$.  

Now replace the instances of $n_1$ in $S$ with entries of $\str_1$ as follows.
In the rightmost box of $S$ containing an $n_1$, replace that $n_1$ with the first letter of $\str_1$. If the next letter of $\str_1$ is \red{red}, place it in the same box the previous entry was placed in. Otherwise, place it in the next box of $S$ to the left that contains an $n_1$, replacing that $n_1$. Continue in this manner until all letters of $\str_1$ have been placed into $S$. This procedure is well-defined since by (S.1) no more than one entry $n_1$ appears in any column of $S$; and since by (G.1) the number $\wt(S)_{n_1}$ of $n_1$'s in $S$ is exactly the length of the string $\str_1$ minus the number of red entries, each black entry goes in a different box of $S$, and each red entry goes in a box along with a black entry. Repeat this process with $n_2, \ldots , n_\ell$.   
(This algorithm is a minor modification of that appearing in \cite[Proof of Theorem~3.5]{Pechenik.Searles}.)

For example, let $S$ be the leftmost filling in the lower row of Figure~\ref{fig:lascouxatom}. We have $\wt(S)=(2,0,2)$. A mesonic glide of $\wt(S)$ is $b=(2,1,{\color{red}2})$. We have $n_1=1$, $n_2=3$, $\str_1 = 11$ and $\str_2 = 2{\color{red}3}3$. We construct the filling $T$ from $S$ and $b$ as follows. Replace the two $1$s in $S$ with the two $1$s from $\str_1$ (which does nothing to $S$), then replace the two $3$s in $S$ with $\str_2$. The $2$ and ${\color{red}3}$ from $\str_2$ are placed in the rightmost box (along with the $1$ already there); then, the remaining $3$ from $\str_2$ is placed in the box to the left. In this way, we obtain $T$ as the second filling from the left in the lower row of Figure~\ref{fig:lascouxatom}. Note that indeed $T$ has weight $(2,1,2)$ and destandardizes to $S$. 

By construction, the resulting filling $T$ has weight $b$ and destandardizes to $S$. We need to show that $T \in \LASSF(a)$. 
To see this, notice that the entries of $\str_j$ (which replace the entries $n_j$ in $S$) are all strictly larger than $n_{j-1}$ and weakly smaller than $n_j$. This fact implies that all inequalities between entries of boxes in $S$ are preserved, and thus all of (S.1), (S.2), (S.3) and (S.4) are preserved. Finally, since $S\in \latok(a) \subseteq \LASSF(a)$, the first column of $S$ has an anchor $a_i$ in each nonempty row $a_i$, and so these (nonzero) $a_i$'s are a subset of the $n_i$'s. Since $b$ is a mesonic glide of $\wt(S)$, by definition the last entry of each $\str_i$ is $n_i$. Therefore, the process of constructing $T$ from $b$ and $S$ ensures that the anchors in the first column are replaced by themselves, i.e., they do not change. Hence (S.5) is also satisfied, and we have $T \in \LASSF(a)$.

The uniqueness of $T$ follows from the lack of choice at each step in this process.
\end{proof}

\begin{proof}[Proof of Theorem~\ref{thm:LascouxAtom2kaon}]
For $S \in \latok(a)$,  Lemma~\ref{lem:kaon_for_destandardized_wt} says that
\[
\kaon_{\wt(S)} = \sum_{T \in \destand^{-1}(S)} \beta^{|T|-|S|}\x^{\wt(T)}.
\]
Therefore,
\begin{align*}
\sum_{S \in \latok(a)} \beta^{|S|-|a|} \kaon_{\wt(S)} &= \sum_{S \in \latok(a)} \beta^{|S|-|a|} \sum_{T \in \destand^{-1}(S)} \beta^{|T|-|S|} \x^{\wt(T)} \\&= \sum_{U \in \LASSF(a)} \beta^{|U|-|a|}\x^{\wt(U)} \\&= \lascouxatom_a,
\end{align*}
where the second equality is by Lemma~\ref{lem:destand_of_LATabs} and the third equality is by definition.
\end{proof}

\begin{remark}
Setting $\beta=0$ in Theorem~\ref{thm:LascouxAtom2kaon} recovers the expansion of Demazure atoms into fundamental particles from \cite[Theorem~4.17]{Searles}. In particular, the meson-highest fillings with no free entries are exactly the ``particle-highest'' fillings of \cite{Searles}.
\end{remark}

\section{Lascoux and quasiLascoux polynomials}\label{sec:lascoux}

\subsection{QuasiLascoux polynomials}
The quasikey basis of \cite{Assaf.Searles:2} is a common coarsening of the fundamental slide and Demazure atom bases of $\Poly_n$, a refinement of the basis of Demazure characters, and a lifting of the quasiSchur basis of \cite{HLMvW11:QS} from $\QSym_n$ to $\Poly_n$. 
We introduce a $K$-theoretic analogue of the quasikey basis---alternatively, a lifting of the quasiGrothendieck basis from $\QSym[\beta]$ to $\Poly[\beta]$. See Figure~\ref{fig:K_bases} for a visual representation of the relationships among these various bases.

\begin{definition}\label{def:QL}
Given a weak composition $a$, the {\bf quasiLascoux polynomial} $\qlascoux_a$ is given by
\[\qlascoux_a^{(\beta)} =  \sum_{\stackrel{b\ge a}{b^+=a^+}}\lascouxatom_b^{(\beta)}.\]
\end{definition}

\begin{proposition}\label{prop:qlascoux_basis}
The set 
\[
\{ \beta^k \qlascoux^{(\beta)}_a : k \in \mathbb{Z}_{\geq 0} \text{ and $a$ is a weak composition of length $n$} \} 
\]
is an additive basis of the free $\mathbb{Z}$-module $\Poly_n[\beta]$. Hence, for any fixed $p \in \mathbb{Z}$, 
\[ 
\{ \qlascoux^{(p)}_a : \text{$a$ is a weak composition of length $n$} \}
\]
is a basis of $\Poly_n$.
\end{proposition}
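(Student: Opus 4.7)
The plan is to parallel the proof of Theorem~\ref{thm:kaon_basis}. By Definition~\ref{def:QL},
\[
\qlascoux_a^{(\beta)} = \sum_{\substack{b \geq a \\ b^+ = a^+}} \lascouxatom_b^{(\beta)},
\]
so the transition matrix from $\{\qlascoux_a^{(\beta)}\}$ to $\{\lascouxatom_a^{(\beta)}\}$ has $0/1$ coefficients independent of $\beta$; the sum is finite since $b^+ = a^+$ forces $|b|=|a|$ and the fixed length bounds the number of such $b$.

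First I would verify that this transition is unitriangular with respect to the lexicographic total order on weak compositions of length $n$. The diagonal contribution is the term $b = a$ with coefficient $1$. For any other $b$ appearing in the expansion, $b \geq a$ in dominance order with $b \neq a$; letting $i$ be the least index where $b_i \neq a_i$, the constraints $\sum_{j<i} b_j = \sum_{j<i} a_j$ together with $\sum_{j \leq i} b_j \geq \sum_{j \leq i} a_j$ force $b_i > a_i$, hence $b >_{\mathrm{lex}} a$. This gives unitriangularity.

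Next, I would invoke \cite[Proposition~2.2]{Monical}, which establishes that $\{\beta^k \lascouxatom_a^{(\beta)} : k \in \mathbb{Z}_{\geq 0}, \ a \text{ a weak composition of length } n\}$ is an additive $\mathbb{Z}$-basis of $\Poly_n[\beta]$. Since the triangular integer matrix above is invertible over $\mathbb{Z}$, the first assertion follows: $\{\beta^k \qlascoux_a^{(\beta)}\}$ is also a $\mathbb{Z}$-basis of $\Poly_n[\beta]$. For the specialization assertion, fix $p \in \mathbb{Z}$; the same $0/1$ unitriangular matrix (its entries do not depend on $\beta$) still relates $\{\qlascoux_a^{(p)}\}$ and $\{\lascouxatom_a^{(p)}\}$, and since the latter is a basis of $\Poly_n$ (the $\beta = p$ specialization of Monical's result), so is the former.

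The only substantive point is establishing the lex-unitriangularity of the $\qlascoux$-to-$\lascouxatom$ change of basis; once this is in hand, everything is formal and precisely parallel to the kaon case of Theorem~\ref{thm:kaon_basis}.
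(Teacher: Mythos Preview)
Your proof is correct and takes a genuinely different route from the paper's. The paper argues directly against monomials: it constructs an explicit well-order $\prec$ on pairs $(k,a)$ (using, in sequence, the position $\ell_+(a)$ of the last nonzero entry, the largest part $M(a)$, a lexicographic order on the string $1^{a_1}2^{a_2}\cdots n^{a_n}$, and finally the $\beta$-exponent $k$), observes that the $\prec$-leading term of $\qlascoux_a^{(\beta)}$ is $\beta^0\x^a$, and then runs a descent argument to get spanning, with linear independence from distinctness of leading terms. Your approach instead observes that Definition~\ref{def:QL} gives a $0/1$ transition matrix to Lascoux atoms that is unitriangular in lexicographic order, and then transports the basis property from the Lascoux atoms via \cite[Proposition~2.2]{Monical}. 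This is exactly parallel to the paper's own proof of Theorem~\ref{thm:kaon_basis} (glide-to-kaon triangularity), and is cleaner: the transition matrix is visibly $\beta$-free, so the specialization statement follows immediately without a separate argument. The paper's approach has the mild advantage of being more self-contained at the level of monomials (it does not explicitly invoke the Lascoux atom basis), at the cost of the somewhat intricate order $\prec$ and a leading-term claim that itself requires unpacking the Lascoux atom combinatorics.
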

\begin{proof}
First, we show that 
\[
\{ \beta^k \qlascoux^{(\beta)}_a : k \in \mathbb{Z}_{\geq 0} \text{ and $a$ is a weak composition of length $n$} \} 
\] is a spanning set.
A monomial $m$ in $\Poly_n[\beta]$ corresponds to a pair $(k,a)$, where $k \in \mathbb{Z}_{\geq 0}$ records the power of $\beta$ in $m$ and  $a$ is the weak composition of length $n$ recording the degrees of $x_1, x_2, \dots, x_n$ in $m$. Write $\mathcal{M}$ for the set of all such pairs.

For $a$ a weak composition of length $n$, let $s_a$ denote the string $1^{a_1}2^{a_2} \cdots n^{a_n}$. Write $M(a)$ for the largest element of $a$ and write $\ell_+(a)$ for the position of the rightmost nonzero entry of $a$. Define a total order on $\mathcal{M}$ by $(k,a) \succ (h,b)$ if 
\begin{itemize}
\item $\ell_+(a) > \ell_+(b)$;
\item $\ell_+(a) = \ell_+(b)$ and $M(a) > M(b)$;
\item $\ell_+(a) = \ell_+(b)$, $M(a) = M(b)$, and $s_a >_{\rm lex} s_b$; or
\item $a = b$ and $k > h$.
\end{itemize}
Now, observe that the $\prec$-leading term of $\qlascoux_a$ is $\beta^0 \x^a$. Hence, if the $\prec$-leading term of $f \in \Poly_n[\beta]$ is $c_a \beta^k \x^a$, then the $\prec$-leading term of 
\[
f_1 \coloneqq f - c_a \beta^k \qlascoux_a
\]
is $c_b \beta^h \x^b$ for some $(b, h) \prec (a,k)$. Then,
\[
f_2 \coloneqq f - c_b \beta^h \qlascoux_b
\]
has $\prec$-leading term $c_d \beta^j \x^d$ for some $(d, j) \prec (b,h)$, etc. Since $\prec$ is a well order on $\mathcal{M}$, this process must terminate. Hence, $f$ is a finite $\mathbb{Z}$-linear combination of elements of our putative basis.

Linear independence is immediate from each element of the putative basis having a different leading term. This proves the first sentence of the proposition.

The second sentence of the proposition is immediate from the first.
\end{proof}

\begin{proposition}\label{prop:generalize_qkey}
The quasikey polynomials are the $\beta=0$ specialization of quasiLascoux polynomials: \[\qlascoux_a^{(0)}=\qkey_a\]
\end{proposition}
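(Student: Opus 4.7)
The plan is to unfold both sides at $\beta=0$ and reduce to a previously known decomposition of quasikey polynomials into Demazure atoms. Starting from Definition~\ref{def:QL}, I would write
\[
\qlascoux_a^{(0)} \;=\; \sum_{\substack{b \geq a \\ b^+ = a^+}} \lascouxatom_b^{(0)}.
\]
By Definition~\ref{def:Demazure_atom}, $\lascouxatom_b^{(0)} = \atom_b$, so this simplifies to
\[
\qlascoux_a^{(0)} \;=\; \sum_{\substack{b \geq a \\ b^+ = a^+}} \atom_b.
\]

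The second step is to recognize the right-hand side as exactly the known Demazure-atom expansion of the quasikey polynomial $\qkey_a$. This is the content of \cite[Theorem~3.4]{Searles} (alluded to in the introduction when the authors write that ``the Demazure atoms refine the quasikeys''): the quasikey $\qkey_a$ is the positive sum of those Demazure atoms $\atom_b$ indexed by weak compositions $b$ with $b \geq a$ in dominance order and $b^+ = a^+$. Invoking this identity finishes the proof.

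The only thing to check carefully is that the indexing conditions in Definition~\ref{def:QL} match the indexing conditions in the known quasikey-into-atoms formula: both require dominance $b \geq a$ and equality of positive parts $b^+ = a^+$. Since the $K$-theoretic definition of $\qlascoux_a$ was designed precisely to be the $\beta$-deformation of this decomposition, the match is exact and no further combinatorial work is needed. The argument is thus essentially a one-line chase through definitions, with the substantive input being the previously established quasikey-to-atom expansion. The main obstacle (which is really no obstacle at all, given the literature) is simply citing the correct result from \cite{Searles} or \cite{Assaf.Searles:2} for the atom expansion of $\qkey_a$.
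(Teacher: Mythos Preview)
Your proposal is correct and matches the paper's proof essentially line for line: the paper invokes the formula $\qkey_a = \sum_{b\ge a,\, b^+=a^+}\atom_b$ from \cite{Searles}, then notes that the result follows from Definition~\ref{def:QL} together with $\lascouxatom_b^{(0)}=\atom_b$.
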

\begin{proof}
In \cite{Searles}, it is proved that 
\[\qkey_a =  \sum_{\stackrel{b\ge a}{b^+=a^+}}\atom_b.\] 
The statement then follows from Definition~\ref{def:QL} and the fact that $\lascouxatom_b^{(0)} = \atom_b$.
\end{proof}

The following is clear from Definitions~\ref{def:QuasiGroth}  and~\ref{def:QL}.

\begin{proposition}\label{prop:generalize_qGroth}
Suppose that the positions of the nonzero entries in the weak composition $a$ form an interval and that $a_k$ is the last nonzero entry of $a$. Then, 
\[  \qlascoux_a^{(\beta)}  = \qgroth_{a^+}^{(\beta)}(x_1, \ldots , x_k).   \] 
In particular, every quasiGrothendieck polynomial is a quasiLascoux polynomial.
\end{proposition}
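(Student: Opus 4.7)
The statement reduces to identifying two sums of Lascoux atoms. By Definition~\ref{def:QuasiGroth}, $\qgroth_{a^+}^{(\beta)}(x_1,\ldots,x_k)$ is the sum of $\lascouxatom_c^{(\beta)}$ over weak compositions $c$ of length $k$ with $c^+ = a^+$. By Definition~\ref{def:QL}, $\qlascoux_a^{(\beta)}$ is the sum of $\lascouxatom_b^{(\beta)}$ over weak compositions $b$ of length $n$ with $b \geq a$ and $b^+ = a^+$. The plan is to show that padding a length-$k$ weak composition with trailing zeros gives a bijection between these two index sets; since corresponding summands agree, the identity follows.

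For the direction from $b$ to $c$, note that since the nonzero entries of $a$ lie in positions $\leq k$, we have $\sum_{i=1}^k a_i = |a|$. Combined with $b \geq a$ and $|b| = |a^+| = |a|$, this forces $\sum_{i=1}^k b_i = |b|$, so $b_{k+1} = \cdots = b_n = 0$. Thus $b$ restricts to a unique weak composition $c$ of length $k$ with $c^+ = a^+$.

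For the converse direction, let the nonzero entries of $a$ fill the interval $p, p+1, \ldots, k$, so that $a^+ = (a_p,\ldots,a_k)$ has length $\ell \coloneqq k - p + 1$. Given any $c$ of length $k$ with $c^+ = a^+$, let $b$ be its zero-padding to length $n$; then $b^+ = a^+$ is immediate, so the task is to verify $b \geq a$. The inequality is trivial for $j < p$ (where $\sum_{i=1}^j a_i = 0$) and for $j \geq k$ (where both partial sums equal $|a|$). For $p \leq j \leq k$, let $t$ be the number of nonzero entries of $c$ at positions $\leq j$; since at most $k - j$ nonzero entries of $c$ lie in positions $j+1,\ldots,k$, we get $t \geq \ell - (k - j) = j - p + 1$. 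Thus $\sum_{i=1}^j c_i = a^+_1 + \cdots + a^+_t \geq a^+_1 + \cdots + a^+_{j - p + 1}$, which equals $a_p + \cdots + a_j = \sum_{i=1}^j a_i$ precisely because the interval hypothesis identifies $a^+_m$ with $a_{p + m - 1}$.

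The ``in particular'' clause then follows by taking $a = (\alpha_1, \ldots, \alpha_{\ell(\alpha)}, 0, \ldots, 0)$ for any composition $\alpha$, whose nonzero entries trivially form an interval ending at $k = \ell(\alpha)$ with $a^+ = \alpha$. The main obstacle is essentially absent: the verification is partial-sum bookkeeping, and the interval hypothesis enters in exactly one place, to equate the initial segments of $a^+$ with the initial segments of $a$. Without this hypothesis, one can produce $c$ with $c^+ = a^+$ but $c \not\geq a$, so the identity would genuinely fail.
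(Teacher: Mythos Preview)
Your proof is correct and is precisely the unpacking of what the paper leaves implicit: the paper's entire proof is the one-line remark that the statement ``is clear from Definitions~\ref{def:QuasiGroth} and~\ref{def:QL}.'' You have made explicit the bijection of index sets that underlies this claim, including the one place where the interval hypothesis is genuinely used (to match initial segments of $a^+$ with initial segments of $a$). The only step you leave implicit is that $\lascouxatom_{(c_1,\ldots,c_k,0,\ldots,0)} = \lascouxatom_{(c_1,\ldots,c_k)}$ as polynomials, which follows immediately from (S.5) and (S.2) forcing all entries to be at most $k$; this is harmless to omit.
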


Moreover, we have

\begin{proposition}\label{prop:qlascoux_stable_limit}
Let $a$ be a weak composition. Then the stable limit $\lim_{m \to \infty} \qlascoux_{0^m \times a}^{(\beta)}$  of the quasiLascoux polynomial $\qlascoux_a^{(\beta)}$ is the quasiGrothendieck function $\qgroth_{a^+}^{(\beta)}(x_1, x_2, \ldots ).$
\end{proposition}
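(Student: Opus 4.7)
The plan is to unfold both sides using Definitions~\ref{def:QL} and~\ref{def:QuasiGroth} to express them as sums of Lascoux atoms, and then show that their indexing sets agree in the stable limit. Writing $\alpha = a^+$ and letting $n$ denote the length of $a$, the left-hand side is
\[
\qlascoux_{0^m \times a}^{(\beta)} = \sum_{\substack{b \geq 0^m \times a \\ b^+ = \alpha}} \lascouxatom_b^{(\beta)},
\]
summed over weak compositions $b$ of length $m+n$, while the right-hand side $\qgroth_\alpha^{(\beta)}(x_1, x_2, \ldots)$ is (in the stable limit) the sum $\sum_{c^+ = \alpha} \lascouxatom_c^{(\beta)}$ over all weak compositions $c$ with finite support. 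Both identifications rely on the harmless fact that padding a weak composition with trailing zeros does not change its Lascoux atom.

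The heart of the proof is a simple claim: for any weak composition $c$ with $c^+ = \alpha$, the dominance $c \geq 0^m \times a$ holds whenever $m$ is sufficiently large. I would verify this by a direct check of partial sums, split into the ranges $i \leq m$ (where all partial sums of $0^m \times a$ vanish, so dominance is automatic) and $i > m$ (where, once $m$ exceeds the position of the last nonzero entry of $c$, the partial sums of $c$ have already saturated at $|\alpha| = |a|$, which dominates any partial sum of $a$).

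With this claim in hand, the stable-limit argument is essentially immediate: for any fixed monomial $\beta^k \x^d$, only finitely many $\lascouxatom_b^{(\beta)}$ contribute to its coefficient, and for $m$ large enough every such $b$ with $b^+ = \alpha$ satisfies $b \geq 0^m \times a$. Conversely, every term in $\qlascoux_{0^m \times a}^{(\beta)}$ already has $b^+ = \alpha$. Hence the coefficient of $\beta^k \x^d$ in $\qlascoux_{0^m \times a}^{(\beta)}$ stabilizes to its value in $\qgroth_\alpha^{(\beta)}(x_1, x_2, \ldots)$.

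The main obstacle is notational bookkeeping rather than a conceptual hurdle: Definition~\ref{def:QL} treats $b$ as a weak composition of the specific length $m+n$, whereas the stable limit of Definition~\ref{def:QuasiGroth} involves weak compositions of varying lengths. I would handle this by explicitly identifying each weak composition with its extensions by trailing zeros, and verifying that this identification preserves both the Lascoux atom and the relevant dominance condition against $0^m \times a$.
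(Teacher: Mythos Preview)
Your proposal is correct and follows essentially the same approach as the paper: both unfold the two sides into Lascoux atoms via Definitions~\ref{def:QL} and~\ref{def:QuasiGroth} and then match the indexing sets using the observation that any $b$ with $b^+=a^+$ and $\ell_+(b)\le m$ automatically dominates $0^m\times a$. The only difference is in how the limit is packaged: the paper restricts $\qlascoux_{0^m\times a}^{(\beta)}$ to the first $m$ variables and uses that $\lascouxatom_b^{(\beta)}$ is divisible by $x_i$ whenever $b_i>0$ (which is also what underlies your finiteness claim) to obtain the exact identity $\qlascoux_{0^m\times a}^{(\beta)}(x_1,\ldots,x_m)=\qgroth_{a^+}^{(\beta)}(x_1,\ldots,x_m)$ before letting $m\to\infty$, whereas you argue monomial-wise stabilization directly.
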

\begin{proof}
Let $m>0$ and consider the polynomial $\qlascoux_{0^m\times a}^{(\beta)}(x_1, \ldots , x_{m})$. Observe that, for any weak composition $b$, the Lascoux atom $\lascouxatom_b^{(\beta)}$ is divisible by $x_{b_i}$ whenever $b_i>0$. Hence, if $\lascouxatom_b^{(\beta)}$ appears in the Lascoux atom expansion of $\qlascoux_{0^m\times a}^{(\beta)}$, then it is annihilated on restriction to $m$ variables unless $\ell_+(b) \leq m$. Thus, by Definitions~\ref{def:QuasiGroth}  and~\ref{def:QL},
\[\qlascoux_{0^m\times a}^{(\beta)}(x_1, \ldots , x_m)  =  \sum_{\substack{b^+=a^+ \\ \ell_+(b)\le m}}  \lascouxatom_b^{(\beta)} =  \qgroth_{a^+}^{(\beta)}(x_1, \ldots , x_m).\]
The proposition then follows by letting $m\to \infty$.
\end{proof}

\begin{remark}
Setting $\beta=0$ in Proposition~\ref{prop:qlascoux_stable_limit} gives a new proof of the fact that quasikey polynomials stabilise to quasiSchur functions; this was proved via a different method in \cite[\textsection 4.3]{Assaf.Searles:2}.
\end{remark}

To give the monomial expansion of a quasiLascoux polynomial directly, we define $\qlascouxSSF(a)$ to be all set-valued skyline fillings of shape $a$ satisfying (S.1)--(S.4), as well as
\begin{itemize}
\item[(S.$5'$)] anchors in the first column are at most their row index and decrease from top to bottom.
\end{itemize}
We call $\qlascouxSSF(a)$  the {\bf set-valued quasi-skyline fillings} of shape $a$. Then we have 

\begin{proposition}\label{prop:LSetSF}
Given a weak composition $a$, we have
\[\qlascoux_a^{(\beta)} = \sum_{S\in \qlascouxSSF(a)}\beta^{|S|-|a|}\x^{\wt(S)}.\]
\end{proposition}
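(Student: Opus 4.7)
The plan is to unfold the definition of $\qlascoux_a^{(\beta)}$ using Definition~\ref{def:QL} together with the generating-function description of Lascoux atoms from Definition~\ref{defn:lasAtom}, giving
\[
\qlascoux_a^{(\beta)} \;=\; \sum_{\substack{b \geq a \\ b^+ = a^+}} \; \sum_{F \in \LASSF(b)} \beta^{\ex(F)}\,\x^{\wt(F)}.
\]
Since $|b|=|a|$ whenever $b^+ = a^+$, it then suffices to produce a weight-preserving bijection
\[
\Phi \colon \bigsqcup_{\substack{b \geq a \\ b^+ = a^+}} \LASSF(b) \;\longrightarrow\; \qlascouxSSF(a)
\]
satisfying $\ex(F) = |\Phi(F)| - |a|$. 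For such a $b$, write $m_1 < \cdots < m_\ell$ for the positions of the nonzero entries of $b$ and $n_1 < \cdots < n_\ell$ for those of $a$; comparing partial sums at index $n_i$ and using $b \geq a$ forces $m_i \leq n_i$ for every $i$. Given $F \in \LASSF(b)$, define $\Phi(F)$ by relocating, for each $i$, the nonempty row of $F$ sitting in row $m_i$ down to row $n_i$; this is shape-compatible since $b_{m_i} = a_{n_i}$, and the relative vertical order of the nonempty rows is preserved because both $(m_i)$ and $(n_i)$ are strictly increasing.

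Next I would verify that $\Phi(F) \in \qlascouxSSF(a)$. Conditions (S.1), (S.2), and (S.4) are immediate: each column's multiset of labels, each row's internal sequence of labels, and the anchor/free pattern within each cell are all untouched by the row-relabeling. For (S.3), the crucial observation is that a triple (of either type) is determined by two nonempty rows together with their lengths, their column positions, and their relative vertical order; since $\Phi$ preserves row lengths, acts bijectively on nonempty rows, and preserves their vertical order, triples of $\Phi(F)$ correspond exactly to triples of $F$ with the same three labels and the same longer/shorter comparison, so inversion triples remain inversion triples. For (S.$5'$), the first-column anchors of $\Phi(F)$, read from bottom to top, are $m_1 < m_2 < \cdots < m_\ell$, placed respectively in rows $n_1 < n_2 < \cdots < n_\ell$; the bound $m_i \leq n_i$ supplies the anchor inequality, and strict increase bottom-to-top is exactly the required decrease from top to bottom.

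To invert $\Phi$, let $S \in \qlascouxSSF(a)$ have first-column anchors $c_1 < c_2 < \cdots < c_\ell$ in rows $n_1 < \cdots < n_\ell$, with $c_i \leq n_i$ from (S.$5'$). Define the weak composition $b$ by $b_{c_i} = a_{n_i}$ and $b_j = 0$ otherwise. Then $b^+ = a^+$ is immediate, and $b \geq a$ holds because moving each nonzero entry from position $n_i$ to the earlier position $c_i$ weakly increases every partial sum. Relocating the row of $S$ in row $n_i$ up to row $c_i$ produces a filling of shape $b$ whose first-column anchor in row $c_i$ is $c_i$, restoring condition (S.5); conditions (S.1)--(S.4) are preserved by the same analysis as before. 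Since $\Phi$ merely relabels rows, we have $\wt(\Phi(F)) = \wt(F)$ and $|\Phi(F)| - |a| = |F| - |b| = \ex(F)$, so summing over the domain yields the claimed identity.

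The main obstacle is the careful verification of condition (S.3) under $\Phi$. This requires being explicit that triples in a skyline are defined by pairs of (possibly nonadjacent) nonempty rows along with their relative vertical order and their lengths, and not by row adjacency; once this is pinned down, inserting or deleting empty rows between a fixed pair of nonempty rows, and coherently reindexing the shape, can neither create nor destroy a triple nor alter its type, and the rest of the proof is bookkeeping.
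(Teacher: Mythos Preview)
Your proof is correct and follows essentially the same approach as the paper: the paper exhibits the very same row-reindexing bijection (described in the inverse direction, sending $T\in\qlascouxSSF(a)$ to an element of $\LASSF(b)$ by moving each row so that its first-column anchor equals its row index), and justifies (S.3) with the one-line observation that ``moving rows without changing their relative order does not affect the inversion/coinversion status of any triple.'' Your write-up is more detailed in verifying each axiom and in unpacking why $b\geq a$ with $b^+=a^+$ forces $m_i\leq n_i$, but the underlying idea is identical.
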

\begin{proof}
There is a weight-preserving bijection
\[\qlascouxSSF(a) \,\,\,\, \longleftrightarrow \coprod_{\substack{b\ge a \\ b^+ = a^+}}  \LASSF(b),\] 
where the image of $T\in \qlascouxSSF(a)$ is obtained by moving each row of $T$ downwards until each anchor in the first column is equal to its row index. This is well-defined since moving rows without changing their relative order does not affect the inversion/coinversion status of any triple.
The proposition then follows from Definitions~\ref{defn:lasAtom} and~\ref{def:QL}.
\end{proof}

\begin{example}
For $a = (1,0,2)$, the set $\qlascouxSSF(a)$ consists of the ten fillings shown in Figure~\ref{fig:qlascoux}. 
Therefore, we have 
\[\qlascoux_{102}^{(\beta)} = \x^{102} + \x^{111} + \x^{120} + \beta\x^{112} + \beta\x^{202} + \beta\x^{121} + \beta\x^{211} + \beta\x^{220} + \beta^2\x^{212} + \beta^2\x^{221}.\]
\end{example}

\begin{figure}[h]
  \begin{center}
    \begin{displaymath}
      \begin{array}{c@{\hskip2\cellsize}c@{\hskip2\cellsize}c@{\hskip2\cellsize}c@{\hskip2\cellsize}c@{\hskip2\cellsize}c}
	\begin{ytableau}
	\none  & \textbf{3} & \textbf{3} \\
	\none \vline \\
	\none  & \textbf{1}  
	\end{ytableau} \vspace{1cm} &
	\begin{ytableau}
	\none  & \textbf{3} & \textbf{3}2 \\
	\none \vline \\
	\none  & \textbf{1}  
	\end{ytableau} 	&
	\begin{ytableau}
	\none  & \textbf{3} & \textbf{3}1 \\
	\none \vline \\
	\none  & \textbf{1}  
	\end{ytableau} 	&
	\begin{ytableau}
	\none  & \textbf{3} & \textbf{3}21 \\
	\none \vline \\
	\none  & \textbf{1}  
	\end{ytableau} 	&
	\begin{ytableau}
	\none  & \textbf{3} & \textbf{2} \\
	\none \vline \\
	\none  & \textbf{1}  
	\end{ytableau} 	\\
	\begin{ytableau}
	\none  & \textbf{3} & \textbf{2}1 \\
	\none \vline \\
	\none  & \textbf{1}  
	\end{ytableau} 	&
	\begin{ytableau}
	\none  & \textbf{3}2 & \textbf{2} \\
	\none \vline \\
	\none  & \textbf{1}  
	\end{ytableau} 	&
	\begin{ytableau}
	\none  & \textbf{3}2 & \textbf{2}1 \\
	\none \vline \\
	\none  & \textbf{1}  
	\end{ytableau} 	&
	\begin{ytableau}
	\none  & \textbf{2} & \textbf{2} \\
	\none \vline \\
	\none  & \textbf{1}  
	\end{ytableau} 	&
	\begin{ytableau}
	\none  & \textbf{2} & \textbf{2}1 \\
	\none \vline \\
	\none  & \textbf{1}  
	\end{ytableau} 	&								
      \end{array}
    \end{displaymath}
    \caption{\label{fig:qlascoux}The ten elements of $\qlascouxSSF(102)$.}
  \end{center}
\end{figure}

\subsection{The glide expansion of a quasiLascoux polynomial}

\begin{definition}\label{def:QY_setvalued_skylines}
Let $a$ be a weak composition and let $S \in \qlascouxSSF(a)$ be a set-valued quasi-skyline filling. We say $S$ is {\bf quasiYamanouchi} if, for every integer $i$ appearing in $S$, either 
\begin{itemize}
\item the leftmost $i$ is an anchor in row $i$ of the leftmost column, or 
\item there is an $i+1$ in some column weakly right of the leftmost $i$ and in a different box.
\end{itemize}
In light of the following Theorem~\ref{thm:QuasiLascoux2glide}, we write $\qltog(a)$ for the set of all quasiYamanouchi $S \in \qlascouxSSF(a)$.
\end{definition}

\begin{remark}
The quasiYamanouchi condition (Definition~\ref{def:QY_setvalued_skylines}) for set-valued quasi-skyline fillings is exactly the meson-highest condition (Definition~\ref{def:meson_highest}) for semistandard set-valued skyline fillings, with $i^\uparrow$ replaced by $i+1$.
\end{remark}

\begin{example}
The first and third fillings in the top row of Figure~\ref{fig:qlascoux} are quasiYamanouchi. The other fillings in Figure~\ref{fig:qlascoux} are not.
\end{example}

\begin{theorem}\label{thm:QuasiLascoux2glide}
For any weak composition $a$, we have
\begin{equation*}
\qlascoux_a^{(\beta)} = \sum_{S \in \qltog(a)} \beta^{|S|-|a|}\glide_{\wt(S)}^{(\beta)}.
\end{equation*}
In particular, every quasiLascoux polynomial $\qlascoux_a^{(\beta)}$ is a positive sum of glide polynomials.
\end{theorem}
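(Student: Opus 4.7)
The plan is to mirror the proof of Theorem~\ref{thm:LascouxAtom2kaon}, with kaons replaced by glide polynomials, $\LASSF(a)$ replaced by $\qlascouxSSF(a)$, and the meson-highest condition replaced by the quasiYamanouchi condition. First I will introduce a destandardization map $\destand$ on $\qlascouxSSF(a)$, defined exactly as in the kaon setting except that $i^\uparrow$ is replaced by $i+1$: at each step, identify the least $i$ whose leftmost occurrence (i) is not an anchor in row $i$ of the leftmost column and (ii) has no $i+1$ weakly to its right in a different box; replace every $i$ by $i+1$, deleting a duplicate in any box that now contains two $i+1$'s. Iterate to a fixed point.

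Next I will prove the quasiLascoux analogue of Lemma~\ref{lem:destand_of_LATabs}, namely that $\destand$ is a retraction of $\qlascouxSSF(a)$ onto $\qltog(a)$. By induction it suffices to show that a single replacement preserves $\qlascouxSSF(a)$; the verifications of (S.1)--(S.4) are essentially identical to those in Lemma~\ref{lem:destand_of_LATabs}, with ``$i^\uparrow$'' replaced throughout by ``$i+1$.'' The new ingredient is (S.$5'$): a first-column anchor $i$ in row $r$ is incremented only when $r \neq i$, and (S.$5'$) then forces $i < r$, whence $i+1 \le r$. Moreover, any $i+1$ strictly above the leftmost $i$ in the first column lies in a different box weakly to the right (the first column is its own ``weakly right'' column), so such a situation blocks replacement; thus the strict top-to-bottom decrease in the first column is preserved.

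I will then prove the analogue of Lemma~\ref{lem:kaon_for_destandardized_wt}: for $S \in \qltog(a)$,
\[
\glide_{\wt(S)}^{(\beta)} = \sum_{T \in \destand^{-1}(S)} \beta^{|T|-|S|} \x^{\wt(T)}.
\]
In the forward direction, I attach to each $T \in \destand^{-1}(S)$ the colored weight $\kwt(T)$ in which the $(i+1)$st entry is colored red whenever destandardization deletes a duplicate while replacing $i$ by $i+1$. Each destandardization step is then an inverse of (m.1), (m.2), or (m.3), so $\kwt(T)$ is a glide of $\wt(S)$. Critically, no additional mesonic-type restriction needs to be verified, since glides (unlike mesonic glides) permit moves of type (m.1) everywhere. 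For the reverse direction, given a glide $b$ of $\wt(S)$ with associated positions $0 = i_0 < i_1 < \cdots < i_\ell$, I construct $T$ by the same string procedure used in Lemma~\ref{lem:kaon_for_destandardized_wt}: form strings $\str_j$ from the entries of $b$ in positions $i_{j-1}+1, \ldots, i_j$ and overwrite the $n_j$'s of $S$ with $\str_j$. The proofs of (S.1)--(S.4) are unchanged; and since the letters of $\str_j$ lie in $[i_{j-1}+1, i_j] \subseteq [n_{j-1}+1, n_j]$, the $\le r$ bound and the strict top-to-bottom decrease of (S.$5'$) are preserved.

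Finally, I will combine the two lemmas as in the proof of Theorem~\ref{thm:LascouxAtom2kaon}:
\begin{align*}
\sum_{S \in \qltog(a)} \beta^{|S|-|a|}\glide_{\wt(S)}^{(\beta)}
&= \sum_{S \in \qltog(a)} \beta^{|S|-|a|} \sum_{T \in \destand^{-1}(S)} \beta^{|T|-|S|}\x^{\wt(T)} \\
&= \sum_{U \in \qlascouxSSF(a)} \beta^{|U|-|a|}\x^{\wt(U)} = \qlascoux_a^{(\beta)},
\end{align*}
the second equality by the retraction property and the third by Proposition~\ref{prop:LSetSF}. The main obstacle will be the careful verification of (S.$5'$) under both the destandardization and its inverse: because (S.$5'$) is strictly weaker than (S.5), first-column anchors are no longer pinned to row indices and can genuinely be altered by these operations. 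One must use the quasiYamanouchi condition, in concert with column-strictness and row-weak-decrease, to check that both the row-index bound and the strict top-to-bottom decrease are preserved throughout.
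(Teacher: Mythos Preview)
Your proposal is correct and takes essentially the same approach as the paper: the paper likewise introduces the destandardization $\destand_Q$ with $i+1$ in place of $i^\uparrow$, proves the retraction and fiber lemmas by direct appeal to the arguments of Lemmas~\ref{lem:destand_of_LATabs} and~\ref{lem:kaon_for_destandardized_wt}, and then assembles them exactly as you do via Proposition~\ref{prop:LSetSF}. One small slip to correct in your reverse construction: the containment $[i_{j-1}+1,i_j]\subseteq[n_{j-1}+1,n_j]$ is false in general (only $i_j\le n_j$ is guaranteed by (G.2), while $i_{j-1}$ can be strictly smaller than $n_{j-1}$); what actually makes the argument go through for (S.1)--(S.4) and (S.$5'$) is simply that the intervals $[i_{j-1}+1,i_j]$ are pairwise disjoint and increasing in $j$, so all strict inequalities among the labels $n_1<\cdots<n_\ell$ in $S$ are preserved after replacement by the corresponding strings.
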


To prove Theorem~\ref{thm:QuasiLascoux2glide}, we introduce a destandardization map $\destand_Q$ on $\qlascouxSSF(a)$. Fix $T\in \qlascouxSSF(a)$. Consider the least integer $i$ appearing in $T$ with the property that

\begin{itemize}
\item the leftmost $i$ in $T$ is not an anchor in the leftmost column, and
\item it has no $i+1$ weakly to its right in a different box;
\end{itemize}
replace every $i$ in $T$ with an $i+1$. (If this results in two instances of $i+1$ in a single box, delete one.) Repeat this replacement process until no further replacements can be made; the final result is the destandardization $\destand_Q(T)$.

\begin{remark}
The destandardization map $\destand_Q$ is exactly the destandardization map $\destand$ of Section~\ref{sec:mesons} with $i+1$ everywhere in place of $i^\uparrow$. 
\end{remark}

\begin{example}
The first, second, fifth, seventh and ninth fillings of Figure~\ref{fig:qlascoux} destandardize to the first filling; the remaining fillings destandardize to the third filling.
\end{example}

The following result is entirely analogous to Lemma~\ref{lem:destand_of_LATabs}.

\begin{lemma}\label{lem:destand_of_qlascouxSSFs}
Let $a$ be a weak composition. If $T\in \qlascouxSSF(a)$, then $\destand_Q(T) \in \qltog(a)$. Moreover, destandardization is a retraction onto $\qltog(a)$, as we have $\destand_Q(T)=T$ if and only if $T\in \qltog(a) \subseteq \qlascouxSSF(a)$. 
\end{lemma}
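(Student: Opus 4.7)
The plan is to mirror the proof of Lemma~\ref{lem:destand_of_LATabs} very closely, leveraging the remark that $\destand_Q$ differs from $\destand$ only by substituting $i+1$ for $i^\uparrow$, and that the conditions defining $\qltog(a)$ as a subset of $\qlascouxSSF(a)$ differ from those defining $\latok(a)$ as a subset of $\LASSF(a)$ by the same substitution. First, I would dispatch the biconditional: if $T \in \qltog(a)$, then by definition no replacement is ever triggered and $\destand_Q(T) = T$; conversely, if $T \notin \qltog(a)$, some letter $i$ satisfies the replacement trigger, so $\destand_Q(T) \neq T$. Combined with the containment $\destand_Q(T) \in \qltog(a)$ that the algorithm's termination condition guarantees once we know $\destand_Q(T) \in \qlascouxSSF(a)$, this handles the ``moreover'' clause.

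Next, to prove $\destand_Q(T) \in \qlascouxSSF(a)$, I would proceed by induction on the number of replacement steps, so it suffices to check that a single step $T \rightsquigarrow T'$ preserves each of (S.1), (S.2), (S.3), (S.4), and (S.5$'$). The verifications of (S.1)--(S.4) transfer verbatim from the proof of Lemma~\ref{lem:destand_of_LATabs}: every appeal there to the value $i^\uparrow$ can simply be replaced by $i+1$, because the trigger condition on $i$ in the definition of $\destand_Q$ now mentions $i+1$ directly rather than the next-larger anchor in~$T$. For instance, the type A and type B inversion-triple case analyses for (S.3) proceed exactly as before, since in each case the obstruction derived is either an $i+1$ strictly right of an $i$, or an $i$ and $i+1$ in distinct boxes of the same column, both of which contradict the triggering conditions.

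The genuinely new point is condition (S.5$'$), but it is actually easier than (S.5). I would argue that a replacement step never alters any anchor in the first column. By (S.1), each column contains at most one copy of $i$, so either $i$ does not occur in column $1$ at all (in which case column $1$ is untouched), or the unique $i$ in column $1$ is free (since the trigger forbids the leftmost $i$ from being an anchor in the leftmost column) and its anchor is some $j>i$, unchanged by replacement. Even if the free $i$ becomes $i+1$ and coincides with the column-$1$ anchor $j=i+1$ so that one copy is deleted, it is the duplicate free entry that disappears while the anchor is preserved. Since (S.5$'$) depends only on the anchors in the first column, it is preserved.

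The main obstacle, as in Lemma~\ref{lem:destand_of_LATabs}, is marshaling the four subcases of (S.3); but because the trigger conditions on $i$ have the same logical form as before (with $i+1$ in place of $i^\uparrow$), no new combinatorial ideas are needed and the arguments transfer directly. After checking all five conditions, the induction closes and the lemma follows.
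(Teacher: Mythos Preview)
Your proposal is correct and matches the paper's approach exactly: the paper's proof is the single sentence ``Identical to the proof of Lemma~\ref{lem:destand_of_LATabs}, with $i+1$ everywhere in place of $i^\uparrow$,'' and you have carried out precisely this substitution, including the observation that the first-column anchors are untouched (which is also how (S.5) was handled in Lemma~\ref{lem:destand_of_LATabs}). Your extra detail on (S.5$'$) is accurate and harmless.
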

\begin{proof}
Identical to the proof of Lemma~\ref{lem:destand_of_LATabs}, with $i+1$ everywhere in place of $i^\uparrow$. 
\end{proof}

\begin{lemma}\label{lem:glide_for_destandardized_wt}
Let $a$ be a weak composition and $S\in \qltog(a)$. Then, 
\[\glide_{\wt(S)} = \sum_{T\in \destand_Q^{-1}(S)} \beta^{|T|-|S|}\x^{\wt(T)}.\]
\end{lemma}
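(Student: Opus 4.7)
The plan is to establish an explicit weight-preserving bijection between glides of $\wt(S)$ and fillings $T \in \destand_Q^{-1}(S)$, paralleling the argument for Lemma~\ref{lem:kaon_for_destandardized_wt} but adapted to the quasiLascoux setting. The right-hand side then generates $\glide_{\wt(S)}$ by definition.

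First, I would associate to each $T \in \destand_Q^{-1}(S)$ a \emph{colored weight} $\kwt(T)$ obtained from $\wt(T)$ by coloring the $(i+1)$st entry red exactly when the $\destand_Q$ step that replaces every $i$ by $i+1$ forces the deletion of a duplicate. As in the mesonic argument, at most one deletion can occur per step, since two duplications would mean two boxes each containing both $i$ and $i+1$ before replacement, forcing an $i+1$ strictly right of an $i$ and preempting that replacement step. I would then verify that each destandardization step affects the $i$th and $(i+1)$st entries of the colored weight either by $(q,r) \mapsto (0,q+r)$ with no deletion, or by $(q,\red{r}) \mapsto (0,q+r-1)$ with deletion. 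These are precisely the inverses of the glide moves (m.1) (when $q>0$, $r=0$), (m.2) (when $q,r>0$, no deletion), and (m.3) (with deletion), so $\kwt(T)$ is a glide of $\wt(S)$. Here, in contrast to Lemma~\ref{lem:kaon_for_destandardized_wt}, \emph{no further mesonic restriction} needs to be ruled out: the earlier proof used the strict (S.5) condition to exclude certain (m.1) moves at positions $n_j-1, n_j$, whereas the weaker (S.$5'$) permits exactly the configurations that produce such moves.

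Second, for the inverse, given a glide $b$ of $\wt(S)$ with $\wt(S)$ having nonzero entries at positions $n_1 < \cdots < n_\ell$ and witnessed by indices $0 = i_0 < i_1 < \cdots < i_\ell$ from (G.1), I would form, for each $j$, a string $\str_j$ by reading $(b_{i_{j-1}+1},\ldots,b_{i_j})$ left to right: each black $b_k>0$ contributes $b_k$ black letters $k$, while each red $b_k>0$ contributes one $\red{k}$ followed by $b_k-1$ black $k$'s. I would then replace the instances of $n_j$ in $S$ (scanning right to left) by the letters of $\str_j$ (reading left to right), placing each red letter in the same box as the preceding letter and each black letter in the next qualifying box to the left. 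This is essentially the construction from the proof of Lemma~\ref{lem:kaon_for_destandardized_wt}, the only difference being that here $i_\ell$ is not constrained to equal $n_\ell$, reflecting that $b$ is a general rather than mesonic glide.

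The main obstacle is verifying that the resulting filling $T$ lies in $\qlascouxSSF(a)$ and satisfies $\destand_Q(T) = S$. Conditions (S.1)--(S.4) go through exactly as in Lemma~\ref{lem:kaon_for_destandardized_wt}, since every entry of $\str_j$ lies in the interval $(n_{j-1}, n_j]$, preserving all strict inequalities between boxes and hence all triple relationships and anchor/free constraints. For the relaxed condition (S.$5'$), I would observe that the first-column anchors of $T$ are obtained from those of $S$ by replacing each first-column anchor $n_j$ by the final (hence largest black) letter of the portion of $\str_j$ placed in column one, which is at most $n_j$; the decreasing-from-top-to-bottom property and the bound by the row index are therefore inherited from $S$ directly. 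That $\destand_Q(T) = S$ then follows from the fact that the destandardization steps, read in the order opposite to the construction of the $\str_j$, undo precisely the refinements encoded by $b$, and uniqueness of $T$ is immediate from the lack of choice at each step.
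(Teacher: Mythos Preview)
Your proposal is correct and follows essentially the same approach as the paper, which simply invokes the argument of Lemma~\ref{lem:kaon_for_destandardized_wt} verbatim (with $i+1$ in place of $i^\uparrow$) for both directions of the bijection. You have in fact supplied more detail than the paper does, correctly identifying the two points where the argument differs---that the mesonic restriction on (m.1) moves no longer needs to be excluded, and that (S.$5'$) must be verified in place of (S.5)---and your treatment of (S.$5'$) via the observation that the replacement letters from $\str_j$ lie in $(i_{j-1},i_j]\subseteq (n_{j-1},n_j]$ is sound.
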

\begin{proof}
We need to establish a weight-preserving bijection between the glides of the weak composition $\wt(S)$ and the fillings $T\in \destand_Q^{-1}(S)$.

Fix $T\in \destand_Q^{-1}(S)$. Define the colored weight $\kwt(T)$ of $T$ to be the weak komposition obtained by coloring the $(i+1)$st entry of $\wt(T)$ \red{red} if an $i+1$ is deleted after replacing every $i$ with an $i+1$ during a step of destandardization. By the same exact reasoning as in the analogous step of the proof of Lemma~\ref{lem:kaon_for_destandardized_wt}, we have that $\kwt(T)$ is a glide of the weak composition $\wt(S)$. 

For the other direction, given a glide $b$ of the weak composition $\wt(S)$,  we must construct (the unique) $T\in \destand_Q^{-1}(S)$ such that $\wt(T)=b$. This is achieved by the same process, and via the same argument, as in Lemma~\ref{lem:kaon_for_destandardized_wt}.
\end{proof}

\begin{proof}[Proof of Theorem~\ref{thm:QuasiLascoux2glide}]
For $S \in \qltog(a)$,  Lemma~\ref{lem:glide_for_destandardized_wt} says that
\[
\glide_{\wt(S)}^{(\beta)} = \sum_{T \in \destand_Q^{-1}(S)} \beta^{|T|-|S|} \x^{\wt(T)}.
\]
Therefore,
\begin{align*}
\sum_{S \in \qltog(a)} \beta^{|S|-|a|}\glide_{\wt(S)}^{(\beta)} &= \sum_{S \in \qltog(a)} \beta^{|S|-|a|}\sum_{T \in \destand_Q^{-1}(S)} \beta^{|T|-|S|} \x^{\wt(T)} \\&= \sum_{U \in \qlascouxSSF(a)} \beta^{|U| - |a|} \x^{\wt(U)} \\&= \qlascoux_a^{(\beta)},
\end{align*}
where the second equality is by Lemma~\ref{lem:destand_of_qlascouxSSFs} and the third equality is Definition~\ref{def:QL}.
\end{proof}

\begin{remark}
Setting $\beta=0$ in the statement of Theorem~\ref{thm:QuasiLascoux2glide} yields a positive combinatorial formula for the fundamental slide expansion of a quasikey polynomial in terms of quasiYamanouchi semi-skyline fillings. Such a formula was alluded to in \cite{Searles}, but not stated explicitly.
\end{remark}

\begin{corollary}\label{cor:qG_into_multifunds}
The quasiGrothendieck polynomials expand positively in the basis of multi-fundamental quasisymmetric polynomials.
\end{corollary}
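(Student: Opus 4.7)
The plan is to chain Proposition~\ref{prop:generalize_qGroth} with Theorem~\ref{thm:QuasiLascoux2glide}. Fix a composition $\alpha$ of length $\ell$, and for any $n \geq \ell$, let $a \coloneqq (0^{n-\ell}, \alpha_1, \ldots, \alpha_\ell)$ denote the weak composition of length $n$ in which $\alpha$ is placed flush-right. Since the nonzero entries of $a$ form the interval $\{n-\ell+1, \ldots, n\}$ with final nonzero entry in position $n$, Proposition~\ref{prop:generalize_qGroth} gives $\qgroth_\alpha^{(\beta)}(x_1, \ldots, x_n) = \qlascoux_a^{(\beta)}$, and Theorem~\ref{thm:QuasiLascoux2glide} then yields the positive glide expansion
\[
\qgroth_\alpha^{(\beta)}(x_1, \ldots, x_n) = \sum_{S \in \qltog(a)} \beta^{|S|-|a|} \glide_{\wt(S)}^{(\beta)}.
\]

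The crux of the argument is to identify each glide polynomial on the right-hand side with a multifundamental. From \cite{Pechenik.Searles}, a glide polynomial $\glide_b^{(\beta)}(x_1, \ldots, x_n)$ coincides with the multifundamental quasisymmetric polynomial $\multifundamental_{b^+}^{(\beta)}(x_1, \ldots, x_n)$ whenever the nonzero entries of $b$ form a right-aligned block ending at position $n$ (this is the polynomial-level statement underlying the ``multifundamental basis lifts to glide basis'' relationship). It therefore suffices to verify that for every $S \in \qltog(a)$, the weight $\wt(S)$ has exactly this right-aligned form; substituting $\glide_{\wt(S)}^{(\beta)} = \multifundamental_{\wt(S)^+}^{(\beta)}$ into the display above then delivers the desired positive multifundamental expansion of $\qgroth_\alpha^{(\beta)}$.

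The main obstacle, I expect, will be proving this right-alignment property of $\wt(S)$. My plan is to proceed by an upward induction harnessing the quasiYamanouchi condition (Definition~\ref{def:QY_setvalued_skylines}): whenever an entry $i$ appears in $S$ but the leftmost $i$ fails to be an anchor in row $i$ of the first column, condition (b) forces an $i+1$ to occur weakly right of the leftmost $i$ in a different cell. Combined with the constraint (S.5$'$) that first-column anchors strictly decrease from top to bottom while being bounded above by their row indices, this should propagate a connected chain of positive weights upward from the bottom of $\text{supp}(\wt(S))$ all the way to the maximal entry, which one shows is forced to equal $n$ via the top-row anchor. Careful bookkeeping of the set-valued free entries via (S.4) will be required, but the structural skeleton of the argument should close in the same spirit as the proofs of Theorems~\ref{thm:LascouxAtom2kaon} and~\ref{thm:QuasiLascoux2glide}.
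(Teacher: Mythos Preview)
Your proposal is correct, and it begins with the same two ingredients as the paper: Proposition~\ref{prop:generalize_qGroth} to realize $\qgroth_\alpha$ as a particular quasiLascoux polynomial, followed by Theorem~\ref{thm:QuasiLascoux2glide} for the positive glide expansion. The divergence is only in the final step.

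The paper dispatches that step with a basis argument: since $\qgroth_\alpha \in \QSym_n[\beta]$ and the glides form a basis of $\Poly_n[\beta]$ whose intersection with $\QSym_n[\beta]$ is exactly the multifundamental basis (\cite[\S3.2]{Pechenik.Searles}), uniqueness of the glide expansion forces every glide occurring to be a multifundamental. No analysis of the fillings $S$ is needed.

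Your route instead verifies combinatorially that each $\wt(S)$ for $S \in \qltog(0^{n-\ell}\times\alpha)$ has support an interval ending at $n$. Your sketch is sound: the top-row first-column anchor is forced to equal $n$ (it is the maximum entry of $S$, and quasiYamanouchi then pins it to row $n$), and for any $i<n$ appearing in $S$, the quasiYamanouchi dichotomy together with (S.$5'$) forces $i{+}1$ to appear as well (either directly, or because the row-$(i{+}1)$ first-column anchor must equal $i{+}1$). No special bookkeeping with (S.4) is actually needed. The paper's argument is shorter; yours has the modest advantage of yielding an explicit formula
\[
\qgroth_\alpha^{(\beta)}(x_1,\ldots,x_n)=\sum_{S\in\qltog(0^{n-\ell}\times\alpha)}\beta^{|S|-|\alpha|}\,\multifundamental_{\wt(S)^+}^{(\beta)}(x_1,\ldots,x_n).
\]
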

\begin{proof}
By Theorem~\ref{thm:QuasiLascoux2glide}, any quasiLascoux polynomial expands positively in the glide basis. By Proposition~\ref{prop:generalize_qGroth}, the quasiGrothendieck polynomials are included among the quasiLascoux polynomials. The statement then follows from the fact (\cite[\textsection 3.2]{Pechenik.Searles}) that the quasisymmetric glide polynomials are the multi-fundamental quasisymmetric polynomials and form a basis of $\QSym[\beta]$.
\end{proof}

\begin{remark}
The number of terms in the expansion of Corollary~\ref{cor:qG_into_multifunds} generally grows without bound as the number of variables increases. In the limit, one finds that a quasiGrothendieck function is a positive sum of multi-fundamental quasisymmetric functions, but that this sum of formal power series has infinitely-many terms.
\end{remark}

\subsection{Lascoux polynomials}
In this section, we study the combinatorial Lascoux polynomials of \cite[\textsection 5]{Monical} and their relations to the other families of polynomials discussed in this paper.
Given a skyline diagram, we augment it on the left with an additional column $0$, called the {\bf basement}. We write $b_i$ for the entry in row $i$ of the basement. Expanding on Definition~\ref{def:setSSF} and Remark~\ref{rmk:svsky}, a set-valued skyline filling with basement is {\bf semistandard} if it (including the basement) satisfies (S.1), (S.2), (S.3), and (S.4). Basement entries do not count towards the weight $\wt(F)$ of a filling $F$ with basement. In diagrams, we shade the boxes of the basement in \textcolor{gray}{gray} to distinguish them from the ordinary boxes.

For $a$ a weak composition, let $\overleftarrow{a}$ denote the weak composition formed by reversing the order of the parts of $a$. For example, if $a=(0,1,0,3)$, then $\overleftarrow{a} = (3,0,1,0)$. 
 Let $\LPSSF(a)$ be the set of semistandard set-valued skyline fillings of shape $\overleftarrow{a}$ with basement $b_i = n-i+1$.

\begin{definition}[{\cite[Equation~(2.3)]{HLMvW11:LRrule}, \cite[\textsection 5]{Monical}}]\label{def:L}
Let $a$ be a weak composition.  The {\bf (combinatorial) Lascoux polynomial} $\lascoux_a$ is given by \[ \lascoux_a^{(\beta)} = \sum_{F \in \LPSSF(a)} \beta^{\ex(F)}\x^{\wt(F)}. \] The {\bf Demazure character} is the $\beta=0$ specialization of the corresponding Lascoux polynomial: 
\[
\key_a = \lascoux_a^{(0)}.
\]
\end{definition}

Demazure characters are, in fact, characters of certain modules with relation to Schubert calculus \cite{Demazure}; no such representation-theoretic realization of Lascoux polynomials is currently known.

\begin{figure}[ht]
\begin{center}
    \begin{displaymath}
      \begin{array}{c@{\hskip2\cellsize}c@{\hskip2\cellsize}c@{\hskip2\cellsize}c@{\hskip2\cellsize}c@{\hskip2\cellsize}c}
\begin{ytableau}
*(gray) 1 & \textbf{1} \\
*(gray) 2 \\
*(gray) 3 & \textbf{2} & \textbf{1} \\
\end{ytableau} \vspace*{.5cm} & 
\begin{ytableau}
*(gray) 1 & \textbf{1} \\
*(gray) 2 \\
*(gray) 3 & \textbf{2} & \textbf{2} \\
\end{ytableau} &
\begin{ytableau}
*(gray) 1 & \textbf{1} \\
*(gray) 2 \\
*(gray) 3 & \textbf{2} & \textbf{2}1 \\
\end{ytableau} &
\begin{ytableau}
*(gray) 1 & \textbf{1} \\
*(gray) 2 \\
*(gray) 3 & \textbf{3} & \textbf{1} \\
\end{ytableau} &
\begin{ytableau}
*(gray) 1 & \textbf{1} \\
*(gray) 2 \\
*(gray) 3 & \textbf{3} & \textbf{2} \\
\end{ytableau} \\
\begin{ytableau}
*(gray) 1 & \textbf{1} \\
*(gray) 2 \\
*(gray) 3 & \textbf{3} & \textbf{3} \\
\end{ytableau} \vspace*{.5cm} &
\begin{ytableau}
*(gray) 1 & \textbf{1} \\
*(gray) 2 \\
*(gray) 3 & \textbf{3} & \textbf{2}1 \\
\end{ytableau} &
\begin{ytableau}
*(gray) 1 & \textbf{1} \\
*(gray) 2 \\
*(gray) 3 & \textbf{3} & \textbf{3}1 \\
\end{ytableau} &
\begin{ytableau}
*(gray) 1 & \textbf{1} \\
*(gray) 2 \\
*(gray) 3 & \textbf{3} & \textbf{3}2 \\
\end{ytableau} &
\begin{ytableau}
*(gray) 1 & \textbf{1} \\
*(gray) 2 \\
*(gray) 3 & \textbf{3} & \textbf{3}21 \\
\end{ytableau} \\ &
\begin{ytableau}
*(gray) 1 & \textbf{1} \\
*(gray) 2 \\
*(gray) 3 & \textbf{3}2 & \textbf{1} \\
\end{ytableau} &
\begin{ytableau}
*(gray) 1 & \textbf{1} \\
*(gray) 2 \\
*(gray) 3 & \textbf{3}2 & \textbf{2} \\
\end{ytableau} &
\begin{ytableau}
*(gray) 1 & \textbf{1} \\
*(gray) 2 \\
*(gray) 3 & \textbf{3}2 & \textbf{2}1 \\
\end{ytableau} &
      \end{array}
    \end{displaymath}
    \caption{The 13 elements of $\LPSSF(102)$.}\label{fig:lascouxPoly}
  \end{center}
\end{figure}

\begin{example}
The fillings in Figure~\ref{fig:lascouxPoly} show that the monomial expansion of the Lascoux polynomial $\lascoux_{102}^{(\beta)}$ is
\begin{align*}
       \lascoux_{102}^{(\beta)} &= x_1^2x_2 + x_1x_2^2 + \beta x_1^2x_2^2 + x_1^2x_3 + x_1x_2x_3 \\ 
       &+ x_1x_3^2 + \beta x_1^2x_2x_3  + \beta x_1^2x_3^2 + \beta x_1x_2x_3^2 + \beta^2 x_1^2x_2x_3^2 \\
        &+ \beta x_1^2 x_2x_3 + \beta x_1x_2^2x_3 + \beta^2 x_1^2x_2^2x_3. \end{align*} The Demazure character $\key_{102}$ is given by setting $\beta=0$ in this expression; equivalently, $\key_{102}$ is the weight generating function for those five fillings in Figure~\ref{fig:lascouxPoly} that contain only one number in each box.
\end{example}

The main result of the remainder of this paper is to show that every Lascoux polynomial $\lascoux_a^{(\beta)}$ is a positive sum of quasiLascoux polynomials. Towards this goal, we first show the weaker result that $\lascoux_a^{(\beta)}$ is a positive sum of Lascoux atoms.

Given a weak composition $a$, let $\sort(a)$ be the rearrangement of the parts of $a$ into weakly decreasing order, and let $w(a)$ be the minimal (Coxeter) length permutation sending $a$ to $\sort(a)$. 

\begin{theorem}\label{thm:lDecomp}
For any weak composition $a$, we have 
\[
\lascoux_a^{(\beta)} = \sum_{\stackrel{\sort(b)=\sort(a)}{w(b)\le w(a)}} \lascouxatom_b^{(\beta)},
\]
where  $\le$ denotes the strong Bruhat order on permutations. 
In particular, every Lascoux polynomial $\lascoux_a^{(\beta)}$ is a positive sum of Lascoux atoms.
\end{theorem}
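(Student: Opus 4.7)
The plan is to establish a weight- and excess-preserving bijection
\[
\Phi \colon \LPSSF(a) \longrightarrow \bigsqcup_{\substack{\sort(b) = \sort(a) \\ w(b) \leq w(a)}} \LASSF(b),
\]
from which the theorem follows by summing $\beta^{\ex(\cdot)}\x^{\wt(\cdot)}$ on both sides. The construction follows the two-step pattern used for Theorems~\ref{thm:LascouxAtom2kaon} and~\ref{thm:QuasiLascoux2glide}: first define the bijection on the underlying anchor configurations, then show that the set-valued decoration lifts along.

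Specializing at $\beta = 0$, the desired identity reduces to the classical decomposition $\key_a = \sum_b \atom_b$ proved in \cite{HLMvW11:LRrule}. Their proof uses a ``straightening'' algorithm that iteratively rearranges rows of a single-valued filling of shape $\overleftarrow{a}$, applying local modifications to preserve (S.1)--(S.3) until the first-column anchors agree with the row indices. I would restate this algorithm $\Phi_0$ in the conventions of the present paper (our skyline diagrams are upside-down relative to those of \cite{HLMvW11:LRrule, Monical}) and verify that each local straightening move corresponds to multiplying the tracked permutation by a simple transposition that descends $w$ in Bruhat order; this produces a shape $b$ with $\sort(b) = \sort(a)$ and $w(b) \le w(a)$. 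I would then extract the key structural feature that $\Phi_0$ preserves the multiset of entries in each column.

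To lift $\Phi_0$ to set-valued fillings, given $F \in \LPSSF(a)$, apply $\Phi_0$ to the anchor configuration of $F$ and reinsert the free entries of $F$ into the resulting filling column by column. The key verification is that condition (S.4)---requiring each free entry to sit in the cell of the least anchor in its column for which (S.2) is not violated---is compatible with the row rearrangements performed by $\Phi_0$. I expect this to be the main obstacle, since (S.4) depends both on column contents (which $\Phi_0$ preserves) and on row-adjacency data (which $\Phi_0$ alters). I would approach this by analyzing a single straightening swap at a time and showing that the per-cell admissibility data, recording which free entries may be added at each anchor, is preserved. Running the procedure in reverse gives the inverse map, completing the bijection and hence the proof.
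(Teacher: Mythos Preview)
Your two-step plan---build a column-set-preserving bijection on anchor fillings, then lift to set-valued fillings by reinserting free entries---is exactly the structure of the paper's proof. However, your assessment of where the work lies is inverted, and there is a genuine gap in the first step.

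For the anchor bijection, you appeal to a ``straightening algorithm'' from \cite{HLMvW11:LRrule} and propose to verify that it preserves column sets and that each move descends in Bruhat order. The paper explicitly remarks that, to the authors' knowledge, no explicit bijection between $\KSSF(a)$ and $\coprod_{b\in\lswap(a)}\ASSF(b)$ had previously appeared; the known proofs of $\key_a = \sum_b \atom_b$ go through divided-difference or crystal arguments rather than a ready-made column-set-preserving map. The paper therefore constructs $\psi$ from scratch via the ``left row-filling'' procedure of \cite{Searles}, and the bulk of Section~4.4 is devoted to showing that the output shape actually lies in $\lswap(a)$. This is done through a translation into Kohnert diagrams: one introduces a threading of the Kohnert diagram, defines a ``nearest skyline'' algorithm via reverse Kohnert moves, and proves (Lemmas~\ref{lem:lswapKohnert}--\ref{lem:shapes_the_same}) that the nearest skyline is itself a Kohnert diagram of $D(a)$ and coincides with the shape of $\psi$. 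Your one-line ``verify that each local straightening move corresponds to a simple transposition descending $w$ in Bruhat order'' does not cover this, and without a concrete algorithm in hand it is not clear what ``each local move'' even refers to.

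Conversely, the set-valued lift that you flag as the ``main obstacle'' is in fact the short part. Once $\psi$ is known to preserve column sets, the paper's final lemma shows in a few lines that a free entry $k$ can be legally placed in column $c$ of $T$ if and only if it can be placed in column $c$ of $\psi(T)$: both conditions reduce to the same inequality between the number of entries $>k$ in column $c$ and in column $c{+}1$. No case analysis of individual swaps against condition~(S.4) is needed.
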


\begin{example} Let $a=(0,1,0,3)$. Then $\sort(a)=(3,1,0,0)$ and $w(a)=3241$. Hence,
\[\lascoux_{0103}^{(\beta)} = \lascouxatom_{0103}^{(\beta)} + \lascouxatom_{1003}^{(\beta)} + \lascouxatom_{0130}^{(\beta)} + \lascouxatom_{1030}^{(\beta)}+ \lascouxatom_{1300}^{(\beta)} + \lascouxatom_{0301}^{(\beta)} + \lascouxatom_{0310}^{(\beta)} + \lascouxatom_{3001}^{(\beta)}+ \lascouxatom_{3010}^{(\beta)} + \lascouxatom_{3100}^{(\beta)}.\]
\end{example}

Specializing Theorem~\ref{thm:lDecomp} at $\beta=0$ recovers a known formula for the Demazure atom expansion of a Demazure character (see, e.g., \cite{Mason:atom}, \cite{HLMvW11:LRrule}).

\begin{remark}
In \cite{Lascoux:transition}, A.~Lascoux introduced $K$-theoretic analogues of Demazure characters in terms of divided difference operators. C.~Monical conjectured (\cite[Conjecture~5.3]{Monical}) that such an \emph{operator Lascoux polynomial} $\lascoux_a^{{\rm op}, (\beta)}$ equals the corresponding combinatorial Lascoux polynomial $\lascoux_a^{(\beta)}$ of Definition~\ref{def:L}. Similarly, there is an \emph{operator Lascoux atom} $\lascouxatom_a^{{\rm op}, (\beta)}$ such that conjecturally $\lascouxatom_a^{{\rm op}, (\beta)} = \lascouxatom_a^{(\beta)}$ (\cite[Conjecture~5.2]{Monical}). (See, \cite[\textsection 5]{Monical} for details.)
By \cite[Theorem~5.1]{Monical}, these operator Lascoux polynomials expand into operator Lascoux atoms according to the same combinatorial formula as in Theorem~\ref{thm:lDecomp}. That is, for any weak composition $a$, we have 
\[
\lascoux_a^{{\rm op},(\beta)} = \sum_{\stackrel{\sort(b)=\sort(a)}{w(b)\le w(a)}} \lascouxatom_b^{{\rm op},(\beta)}.
\]
Hence, Theorem~\ref{thm:lDecomp} proves the equivalence of \cite[Conjecture~5.2]{Monical} and \cite[Conjecture~5.3]{Monical}.   
\end{remark}

Before continuing with our proofs of the positive expansions of a Lascoux polynomial in the Lascoux atom and quasiLascoux bases, we formulate the following related conjecture, which is in some sense a strengthening of Theorem~\ref{thm:lDecomp}.

\begin{conjecture}\label{conj:multiplying_lascouxs}
Let $a$ and $b$ be weak compositions. Then $\lascoux_a^{(\beta)} \cdot \lascoux_b^{(\beta)}$ is a positive sum of Lascoux atoms.
\end{conjecture}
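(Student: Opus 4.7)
The simplest reduction uses Theorem~\ref{thm:lDecomp}: writing $\lascoux_a = \sum_c \lascouxatom_c$ and $\lascoux_b = \sum_d \lascouxatom_d$ over the prescribed Bruhat intervals yields
\[
\lascoux_a^{(\beta)} \cdot \lascoux_b^{(\beta)} = \sum_{c,d} \lascouxatom_c^{(\beta)} \cdot \lascouxatom_d^{(\beta)}.
\]
Individual products $\lascouxatom_c \cdot \lascouxatom_d$ are not generally positive in Lascoux atoms --- already at the Demazure atom specialization $\beta=0$ --- so any proof must rely on collective cancellation governed by the Bruhat-interval structure of the two index sets on the right. My plan is to exploit this structure by first working with a coarser basis and then refining.

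Concretely, I would attempt to prove the intermediate statement that $\lascoux_a \cdot \lascoux_b$ expands positively in the quasiLascoux basis $\{\qlascoux_c\}$. By Definition~\ref{def:QL}, each $\qlascoux_c$ is itself a $\{0,1\}$-positive sum of Lascoux atoms, so quasiLascoux positivity would imply the conjecture. The appeal of this route is that quasiLascoux polynomials bundle Lascoux atoms along weak-composition dominance classes of fixed positive part, while Theorem~\ref{thm:lDecomp} bundles each $\lascoux_a$ along a Bruhat-related class; one should hunt for a direct bijection on pairs $(F,G) \in \LPSSF(a) \times \LPSSF(b)$ that witnesses the match-up between these two packagings. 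An alternative route is a $K$-theoretic insertion algorithm on such pairs, modeled on Buch's set-valued tableau insertion used to prove the Littlewood--Richardson rule for $\sgroth_\lambda \cdot \sgroth_\mu$; the kaon basis of Section~\ref{sec:mesons} might serve as a finer intermediary here, since kaons are closer to monomials than Lascoux atoms, and one could hope to recombine kaon-level products into meson-highest packets via Theorem~\ref{thm:LascouxAtom2kaon}.

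The principal obstacle is that the cohomological analogue --- positivity of $\key_a \cdot \key_b$ in the Demazure atom basis --- appears itself to lack a published positive combinatorial rule outside of special cases, such as one factor being a Schur polynomial. Until that case is settled (perhaps by a crystal-theoretic argument adapted from Kashiwara's theory of Demazure modules), a clean $\beta$-deformation is out of reach. The $K$-theoretic setting compounds the difficulty: free entries in set-valued fillings introduce over-counting that in Buch's argument is controlled by a sign-reversing involution, and an analogous but technically delicate involution seems unavoidable. A reasonable opening move is to verify the conjecture at $\beta = 0$ by computer for small $a, b$ and to inspect the apparent quasiLascoux expansion of $\lascoux_a \cdot \lascoux_b$, since a transparent structural pattern there would suggest which of the two routes above is cleaner.
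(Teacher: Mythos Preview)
The statement you are addressing is Conjecture~\ref{conj:multiplying_lascouxs}, which the paper explicitly leaves open; there is no proof in the paper to compare against. The authors remark that even the $\beta=0$ specialization is the long-standing Reiner--Shimozono conjecture on products of Demazure characters, and they report only computer verification for small cases. Your write-up correctly identifies all of this: you do not claim a proof, you observe that termwise multiplication of Lascoux atoms cannot succeed, and you flag the cohomological obstacle as the principal difficulty. So as a proof, your submission has a genuine gap---namely, no proof is actually given---but this gap is exactly the one the paper itself leaves, and you are transparent about it.

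One caution on your proposed intermediate step: asking for positivity of $\lascoux_a^{(\beta)} \cdot \lascoux_b^{(\beta)}$ in the \emph{quasiLascoux} basis is strictly stronger than Conjecture~\ref{conj:multiplying_lascouxs}, since by Definition~\ref{def:QL} each $\qlascoux_c$ is a multiplicity-free positive sum of Lascoux atoms. Before committing to that route you should test the $\beta=0$ version (quasikey positivity of $\key_a \cdot \key_b$) computationally for a range of $a,b$; if that strengthening fails even once, the quasiLascoux detour is a dead end and you would be forced back to the atom level directly or to a different intermediary. Your alternative suggestion of a $K$-theoretic insertion with sign-reversing involution is closer in spirit to how such positivity statements are typically resolved, but as you note, absent progress on the $\beta=0$ case there is currently no template to deform.
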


For example, 
\begin{align*}
\lascoux_{(0,2)}^{(\beta)} \cdot \lascoux_{(0,1)}^{(\beta)} &= \lascouxatom_{(0,3)}^{(\beta)} + \lascouxatom_{(1,2)}^{(\beta)} + 2\beta\lascouxatom_{(1,3)}^{(\beta)} + \lascouxatom_{(2,1)}^{(\beta)} + \beta\lascouxatom_{(2,2)}^{(\beta)} \\
 &+  \beta^2\lascouxatom_{(2,3)}^{(\beta)} + \lascouxatom_{(3,0)}^{(\beta)} + 2\beta\lascouxatom_{(3,1)}^{(\beta)} + \beta^2\lascouxatom_{(3,2)}^{(\beta)}. 
\end{align*} 
We have checked Conjecture~\ref{conj:multiplying_lascouxs} by computer for all $a,b$ such that $|a| \leq 5, |b| \leq 5,$ and $a$ and $b$ have at most three zeros.  Specializing Conjecture~\ref{conj:multiplying_lascouxs} at $\beta=0$ recovers a well-known conjecture of V.~Reiner and M.~Shimozono on products of key polynomials (see \cite{Pun} for discussion and partial results).

\subsection{Proof of Theorem~\ref{thm:lDecomp}}
First, we need a straightforward operation on weak compositions. 
Following \cite{Assaf.Searles:2}, given a weak composition $a$, we define a {\bf left swap} to be the exchange of two entries $a_i \le a_j$ where $i<j$. 

\begin{definition}[{\cite[\textsection 3.2]{Assaf.Searles:2}}]\label{def:lswap}
Given a weak composition $a$, let $\lswap(a)$ be the set of weak compositions $b$ that can be obtained from $a$ by a (possibly empty) sequence of left swaps.
\end{definition}

\begin{example}\label{ex:lswap}
For the weak compositions $a = (0,1,2)$ and $b = (0,3,1)$, we have
\[
\lswap(a) = \{(0,1,2), (1,0,2), (1,2,0), (0,2,1), (2,0,1), (2,1,0)\}
\]
and
\[ \pushQED{\qed} \lswap(b) = \{(0,3,1), (3,0,1), (1,3,0), (3,1,0)\}. \qedhere \popQED\] \let\qed\relax
\end{example}

The following characterization appears as \cite[Lemma 3.1]{Searles}.
\begin{lemma}\label{lem:dominic's_lemma}
 For any weak composition $a$, 
 \[ \pushQED{\qed}
 \lswap(a) = \{ b : \sort(b) = \sort(a) \text{ and } w(b) \le w(a) \}. \qedhere \popQED
 \]  \let\qed\relax
\end{lemma}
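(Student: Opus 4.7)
The plan is to show both inclusions. I set up by identifying each weak composition $a$ (as a rearrangement of $\lambda \coloneqq \sort(a)$) with a permutation $w(a) \in S_n$ characterized by $a_k = \lambda_{w(a)(k)}$ for all $k$: writing $W_\lambda$ for the Young subgroup stabilizing $\lambda$, the permutations $\pi$ with $\pi \cdot a = \lambda$ form the coset $W_\lambda w(a)$, of which $w(a)$ is the unique minimal length representative. This yields a bijection between rearrangements of $\lambda$ and the set of minimal length coset representatives of $W_\lambda$ in $S_n$. The elementary observation I will repeatedly use: since $\lambda$ is weakly decreasing, for positions $i < j$, the inequality $w(a)(i) > w(a)(j)$ forces $a_i \leq a_j$, and conversely $a_i < a_j$ forces $w(a)(i) > w(a)(j)$.

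For the inclusion $\lswap(a) \subseteq \{b : \sort(b) = \sort(a) \text{ and } w(b) \le w(a)\}$, I would induct on the number of left swaps. It therefore suffices to show: if $a'$ arises from $a$ by a single left swap exchanging $a_i$ and $a_j$ at positions $i < j$ with $a_i \le a_j$, then $w(a') \le w(a)$ in Bruhat order. If $a_i = a_j$, then $a' = a$ and this is vacuous. Otherwise $a_i < a_j$, so $w(a)(i) > w(a)(j)$, and with $t = (i\,j)$ we have $\ell(w(a) \cdot t) < \ell(w(a))$. The strong exchange condition then yields $w(a) \cdot t < w(a)$ in Bruhat order. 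Since $w(a')$ is the minimal length element of the coset $W_\lambda w(a) t$ (to which $w(a) t$ itself belongs), we conclude $w(a') \le w(a) \cdot t < w(a)$.

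For the converse inclusion, I induct on the length difference $d \coloneqq \ell(w(a)) - \ell(w(b)) \ge 0$. The base case $d = 0$ forces $w(a) = w(b)$ (Bruhat-comparability with equal length), hence $a = b$, which lies in $\lswap(a)$ vacuously. For the inductive step, with $w(b) < w(a)$ strictly, I appeal to the theory of Bruhat order on parabolic quotients: one can find a minimal length coset representative $w''$ with $w(b) \le w'' \lessdot w(a)$, where the cover is realized by a single right-multiplication $w(a) = w'' \cdot (i\,j)$ for some transposition. The standard covering characterization in $S_n$ forces $w''(i) < w''(j)$, whence $w(a)(i) > w(a)(j)$, and the elementary observation yields $a_i \le a_j$. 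Letting $a''$ be $a$ with positions $i$ and $j$ swapped, $a \to a''$ is a legal left swap and by construction $w(a'') = w''$. The inductive hypothesis applied to $(a'', b)$ gives $b \in \lswap(a'') \subseteq \lswap(a)$.

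The main obstacle is supplying the parabolic-cover statement in the precise form required: that whenever $w(b) < w(a)$ between minimal coset representatives, one can always descend from $w(a)$ by a single transposition (remaining within the set of minimal representatives) to an element that still lies above $w(b)$. This is where results of Deodhar on Bruhat order in parabolic quotients of Coxeter groups enter, combined with the classical covering characterization in the symmetric group. If that packaging proves inconvenient, an alternative is to bypass Bruhat-order machinery altogether via the tableau criterion expressing $w(b) \le w(a)$ as entrywise inequalities of partial count statistics on $a$ and $b$, then produce the required left swap by inspection at a location certified by these inequalities; I expect this more combinatorial route to work but not to be dramatically shorter.
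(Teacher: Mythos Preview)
The paper does not prove this lemma; it simply records that the statement appears as \cite[Lemma~3.1]{Searles} and places the QED symbol at the end of the statement. So there is no argument in the paper to compare against.

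Your proof is correct and essentially self-contained modulo the one nontrivial input you already flag: that Bruhat order on the set ${}^{\lambda}W$ of minimal left $W_\lambda$-coset representatives is graded by the length function of $S_n$ (Deodhar). Granting that, your inductive step goes through cleanly: a cover $w'' \lessdot w(a)$ in ${}^{\lambda}W$ is then also a cover in $S_n$, hence $w(a) = w''\,t_{ij}$ for some $i<j$ with $w(a)(i) > w(a)(j)$, and the rest follows as you describe. One small point you leave implicit is that $a_i \neq a_j$ in this situation, which you need so that the swap actually produces $a'' \neq a$ with $w(a'') = w''$; this is forced, because $a_i = a_j$ would make $t_{w(a)(j),\,w(a)(i)} \in W_\lambda$ and hence $w'' = t_{w(a)(j),\,w(a)(i)}\, w(a) \in W_\lambda\, w(a)$, contradicting that $w''$ and $w(a)$ are distinct minimal representatives of their common coset. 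With that check added, both inclusions are fully justified.
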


Hence, to prove Theorem \ref{thm:lDecomp}, it suffices by Lemma~\ref{lem:dominic's_lemma} to construct a weight-preserving bijection 
\begin{equation}\label{eq:psibar_map}
 \overline{\psi} :  \LPSSF(a) \rightarrow \coprod_{b \in \lswap(a)} \LASSF(b). 
\end{equation}

We begin by constructing a column-set preserving (and hence weight-preserving) bijection  
\begin{equation}\label{eq:psi_map}
\psi :  \KSSF(a) \rightarrow \coprod_{b \in \lswap(a)} \ASSF(b)
\end{equation}
for the non-set-valued case.
The advantage of Equation~\eqref{eq:psi_map} over Equation~\eqref{eq:psibar_map} is that we know {\it a priori} that there must exist a weight-preserving bijection between $\KSSF(a)$ and $\coprod_{b \in \lswap(a)} \ASSF(b)$, since both sets are known to generate the Demazure character $\key_a$ \cite{HLMvW11:LRrule}, \cite{Mason:atom}. To our knowledge, an explicit bijection between these sets has, however, not appeared previously in the literature. Our goal is to give an explicit bijection $\psi$ that moreover preserves the column sets of the fillings. Having established $\psi$ with this property, we will find it straightforward to extend $\psi$ to the desired map $\overline{\psi}$ of Equation~\eqref{eq:psibar_map}, thereby proving the theorem.

Let $T\in \KSSF(a)$. Define $\psi(T)$ as follows. Let $i_1$ be the least entry in the first column of $T$. For $k > 1$, recursively define $i_k$ to be the greatest entry in column $k$ that is weakly less than $i_{k-1}$, terminating when there is no such entry. Place the entries $i_1, i_2, \dots$ to form the bottom row of $\psi(T)$ in row index $i_1$, while deleting them from $T$. Repeat this process on what remains of $T$ to find the next-lowest row of $\psi(T)$, etc. In \cite{Searles}, this algorithm is referred to as \emph{left row-filling}; by \cite[Lemma 5.2, Lemma 5.3]{Searles}, $\psi(T) \in \ASSF(b)$ for some $b$. (Strictly speaking, the domain of the map defined in \cite{Searles} consists of reverse semistandard Young tableaux rather than the fillings of $\KSSF(a)$. However, as the map clearly operates at the level of column sets (the positions of boxes in a column are irrelevant) and as the column sets of $T \in \KSSF(a)$ can clearly be reordered to create a reverse semistandard Young tableau, this distinction is insignificant.) 

\begin{example}\label{ex:psi}
Suppose that $a = (1,2,0,3,3)$.  Then we have, for example, 
\begin{center}
\begin{displaymath}
 \KSSF(1,2,0,3,3) \ni \begin{ytableau} 
*(gray) 1 & \textbf{1} \\
*(gray) 2 & \textbf{2} & \textbf{1}\\
*(gray) 3 \\ 
*(gray) 4 & \textbf{3} & \textbf{3} & \textbf{2}  \\
*(gray) 5 &  \textbf{5} & \textbf{4} & \textbf{3} \\
\end{ytableau}   \xRightarrow{\hspace*{.25in}\psi\hspace*{.25in}} 
 \begin{ytableau}
 \none & \textbf{5} &\textbf{4} & \textbf{2} \\
\none \vline \\
\none & \textbf{3} & \textbf{3} & \textbf{3} \\
\none & \textbf{2} \\
\none & \textbf{1}   & \textbf{1} \\
\end{ytableau} \in \ASSF(2,1,3,0,3).
\end{displaymath}
\end{center}
Observe that $b =(2,1,3,0,3) \in \lswap(a)$, as desired.
\end{example}

It is clear that $b$ always satisfies $\sort(a) = \sort(b)$.
We need to establish the stronger property that $b \in \lswap(a)$. To this end, we make use of the fact (observed in \cite{Assaf.Schilling}) that the fillings in $\KSSF(a)$ are exactly the same objects (flipped upside-down) as the \emph{semistandard key tableaux} of \cite{Assaf:nonsymmetric}, and hence are in straightforward bijection with \emph{Kohnert diagrams}. 

A {\bf Kohnert move} \cite{Kohnert} on a finite set of boxes in $\mathbb{N}\times \mathbb{N}$ (realized as the lattice points of the first quadrant of the plane) moves the rightmost box in some row down the the highest empty space below it in the same column. The {\bf Kohnert diagrams} of $D(a)$ are the box diagrams that can be obtained from $D(a)$ by a (possibly empty) sequence of Kohnert moves. A semistandard key tableau records a particular choice of Kohnert moves to achieve a given Kohnert diagram; the number in each box of a semistandard key tableau represents the row that box has moved to in the corresponding Kohnert diagram. 

\begin{example}\label{ex:keytabtoKohnert}
Figure~\ref{fig:keytabtoKohnert} shows the semistandard key tableau obtained by flipping the filling $T \in \KSSF(a)$ of Example~\ref{ex:psi} upside-down, together with the corresponding Kohnert diagram. We shade the boxes of the Kohnert diagram in \textcolor{red}{red} to distinguish them from empty space.
\end{example}

\begin{figure}[h]
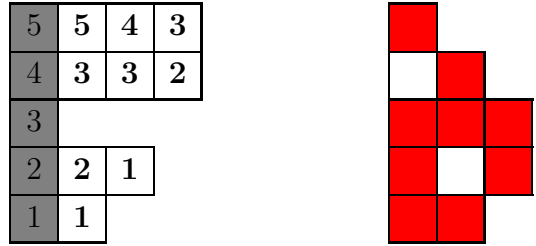

\begin{center}
\begin{displaymath}
\begin{ytableau} 
*(gray) 5 &  \textbf{5} & \textbf{4} & \textbf{3} \\
*(gray) 4 & \textbf{3} & \textbf{3} & \textbf{2}  \\
*(gray) 3 \\ 
*(gray) 2 & \textbf{2} & \textbf{1}\\
*(gray) 1 & \textbf{1} \\
\end{ytableau}
\qquad \qquad \qquad
 \begin{ytableau}
 *(red) \blank \\
 \blank  & *(red) \blank \\
 *(red) \blank  & *(red) \blank & *(red) \blank \\
*(red) \blank  & \blank &  *(red)\blank  \\
*(red)  \blank  & *(red) \blank   \\ 
\end{ytableau}
\end{displaymath}
\caption{\label{fig:keytabtoKohnert} The semistandard key tableau equivalent to the filling $T \in \KSSF(a)$ of Example~\ref{ex:psi}, together with its corresponding Kohnert diagram.}
\end{center}
\end{figure}

This reinterpretation in terms of Kohnert diagrams is useful because it facilitates a diagrammatic understanding of the $\lswap$ operation.

\begin{lemma}\label{lem:lswapKohnert}
Let $a$ and $b$ be weak compositions. Then $b\in \lswap(a)$ if and only if $D(b)$ is a Kohnert diagram of $D(a)$.
\end{lemma}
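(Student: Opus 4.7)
The plan is to establish both directions of the biconditional by exploiting the column structure of Kohnert moves. Combining Lemma~\ref{lem:dominic's_lemma} with the tableau criterion for Bruhat order yields a convenient characterization: $b \in \lswap(a)$ if and only if $\sort(b) = \sort(a)$ and, for every column index $c \geq 1$, the sorted sequence of positions $\{i : b_i \geq c\}$ is entrywise at most the sorted sequence of $\{i : a_i \geq c\}$. I will call this the \emph{column criterion}; both directions of the lemma fit neatly into this framework since Kohnert moves act transparently on each column.

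For the backward direction, suppose $D(b)$ is a Kohnert diagram of $D(a)$. A Kohnert move keeps every box in its original column, so $|\{i : a_i \geq c\}|$ is preserved across the whole sequence of moves, yielding $\sort(b) = \sort(a)$. Moreover, each individual Kohnert move affects only a single column's row-set: it removes one row index $r$ and inserts a strictly smaller index $r' < r$ that was not previously in the row-set. A short case analysis shows such a replacement can only make the sorted row sequence of that column entrywise weakly smaller. Iterating gives the column criterion for $b$, hence $b \in \lswap(a)$.

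For the forward direction, I would induct on the number of left swaps, reducing to the case of a single swap at positions $i < j$ with $a_i \leq a_j$. Then $D(a)$ and $D(b)$ coincide outside rows $i$ and $j$ and differ only in columns $c \in \{a_i+1, \ldots, a_j\}$: in each such column, a box in row $j$ of $D(a)$ must migrate to row $i$. The algorithm processes these columns from right ($c = a_j$) to left ($c = a_i+1$). For each $c$, apply a Kohnert move to the rightmost box in row $j$, which by right-to-left processing lies in column $c$; this sends the box to the highest empty row below $j$ in column $c$. If the box arrives in row $i$, the column is done; otherwise it lands in some intermediate row $r$ with $i < r < j$, where row $r$'s original content is confined to columns $1,\dots,a_r$ with $a_r < c$, so the relocated box is now rightmost in row $r$. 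Apply another Kohnert move and iterate until the box reaches row $i$.

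The main obstacle is verifying this algorithm: one must check that the descending box always remains rightmost in its current row, always has an empty space below to move to, and that subsequent moves never disturb columns already processed. The first two points rest on the invariant that each intermediate row $r$ traversed has original content confined to columns $\le a_r < c$; the third follows because Kohnert moves in column $c$ never alter boxes in other columns. A double induction---on the current vertical position of the descending box and on the number of unprocessed columns---makes these checks precise.
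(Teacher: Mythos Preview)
Your proof is correct, and the forward direction is essentially the paper's argument carried out with more care: both reduce to realizing a single left swap by Kohnert moves, but you explicitly handle the iteration through intermediate rows, which the paper glosses over with ``clearly, these moves are all valid Kohnert moves.''

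The backward direction takes a genuinely different route. The paper argues directly and briefly: Kohnert moves preserve column multisets, hence $\sort(b)=\sort(a)$, and the only way to pass between skyline diagrams via Kohnert moves is to slide the overhang of a longer row down onto a shorter one, which is precisely a left swap. You instead invoke a column-wise characterization of $\lswap(a)$ and check that every Kohnert move can only weakly decrease the sorted row-set in each column. This works, but be aware that what you are really using is a bit more than the raw tableau criterion for Bruhat order: you need that for \emph{minimal coset representatives} $w(a),w(b)$ it suffices to check the tableau criterion only at the block boundaries $k_c=|\{i:\lambda_i\ge c\}|$, since your columns only witness those values of $k$. This is a standard fact about Bruhat order on parabolic quotients in type~A (equivalently, Gale order on the $S_n$-orbit of $\lambda$), but it does not follow immediately from Lemma~\ref{lem:dominic's_lemma} plus the ordinary tableau criterion; you should either cite it or insert the short argument that the increasing-on-blocks property of $w(a)^{-1},w(b)^{-1}$ propagates the inequalities from block ends to all intermediate~$k$. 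The payoff of your approach is that it cleanly separates the two invariants (column cardinality and sorted row-set) and avoids the slightly informal ``only way to rearrange rows'' step in the paper's proof.
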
 
\begin{proof}
Suppose $b\in \lswap(a)$. Then $b$ is obtained from $a$ by a sequence of swaps that move larger entries leftwards. Any such swap can be achieved by Kohnert moves: given $D(a)$, to swap a row of length $r$ with a lower row of length $r'$, where $0\le r'\le r$, move the rightmost $r-r'$ boxes of the higher row down to the lower row, in order from right to left. Clearly, these moves are all valid Kohnert moves and so $D(b)$ can be realized as a Kohnert diagram of $D(a)$.

Conversely, suppose $D(b)$ is a Kohnert diagram of $D(a)$. Since Kohnert moves keep each box in its column, we have $\sort(b) = \sort(a)$. It remains to observe that the only way one can rearrange rows of a skyline diagram via Kohnert moves is to move the ``overhang'' of a longer row down to join onto a (possibly empty) shorter row. But this operation corresponds to an application of the $\lswap$ operation, so $b \in \lswap(a)$.
\end{proof}

\begin{definition}\label{def:nearestskyline}
Suppose that $a$ is a weak composition and $K$ is a Kohnert diagram of $D(a)$. Define the {\bf nearest skyline} diagram of $K$ to be the skyline diagram $K^\uparrow$ obtained as follows: Reading rows of $K$ from left to right, top to bottom, find the first box $\mathfrak{b}$ of $K$ with an empty space immediately to its left; move $\mathfrak{b}$ upwards to the first available space above it. Repeat this operation until all boxes are in left-justified rows, i.e., until a skyline diagram is obtained. 
\end{definition}

An example of the computation of a nearest skyline diagram appears in Figure~\ref{fig:nearest_skyline}.

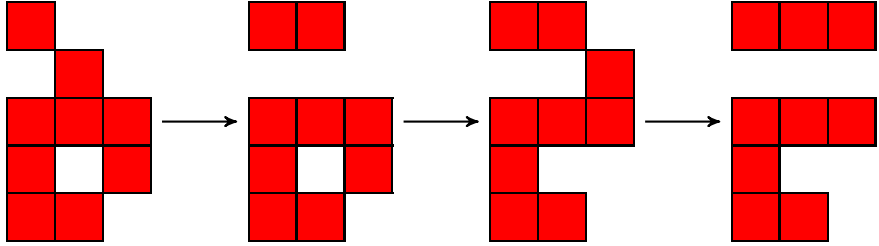
\begin{figure}[h]
\begin{tikzpicture}
\node (A) { \begin{ytableau}
 *(red) \blank \\
 \none  & *(red) \blank \\
 *(red) \blank  & *(red) \blank & *(red) \blank \\
*(red) \blank  & \blank &  *(red)\blank  \\
*(red)  \blank  & *(red) \blank   \\ 
\end{ytableau}};
\node[right = 1 of A] (B) { \begin{ytableau}
 *(red) \blank & *(red) \blank \\
 \none   \\
 *(red) \blank  & *(red) \blank & *(red) \blank \\
*(red) \blank  & \blank &  *(red)\blank  \\
*(red)  \blank  & *(red) \blank   \\ 
\end{ytableau}};
\node[right = 1 of B] (C) { \begin{ytableau}
 *(red) \blank & *(red) \blank \\
 \none & \none &  *(red)\blank  \\
 *(red) \blank  & *(red) \blank & *(red) \blank \\
*(red) \blank  & \none   \\
*(red)  \blank  & *(red) \blank   \\ 
\end{ytableau}};
\node[right = 1 of C] (D) { \begin{ytableau}
 *(red) \blank & *(red) \blank   &  *(red)\blank \\
 \none & \none \\
 *(red) \blank  & *(red) \blank & *(red) \blank \\
*(red) \blank  & \none   \\
*(red)  \blank  & *(red) \blank   \\ 
\end{ytableau}};
\path (A) edge[posex]   (B);
\path (B) edge[posex]   (C);
\path (C) edge[posex]   (D);
\end{tikzpicture}

\caption{The iterative computation of the nearest skyline diagram of the Kohnert diagram from Figure~\ref{fig:keytabtoKohnert}.}\label{fig:nearest_skyline}
\end{figure}

\begin{lemma}\label{lem:nearestisKohnert}
If $a$ is a weak composition and $K$ is a Kohnert diagram of $D(a)$, then $K^\uparrow$ is also a Kohnert diagram of $D(a)$.
\end{lemma}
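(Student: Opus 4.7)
The plan is to prove the lemma by induction on the number of upward moves in the nearest-skyline algorithm producing $K^\uparrow$ from $K$; it thus suffices to show that a single upward move $K \to K^{\text{new}}$, promoting some box $\mathfrak{b}$ from $(r,c)$ to $(r',c)$, takes a Kohnert diagram of $D(a)$ to another Kohnert diagram of $D(a)$.

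At the moment the nearest-skyline algorithm selects $\mathfrak{b}$ at $(r,c)$, three structural facts about $K$ hold: every row strictly above $r$ is already a left-justified skyline; the column-$c$ cells $(r+1,c),\ldots,(r'-1,c)$ are occupied while $(r',c)$ is empty (by the very choice of $r'$); and row $r'$ contains boxes in some initial columns $1,\ldots,\ell_{r'}$ with $\ell_{r'}<c$. These force a reversibility observation: in $K^{\text{new}}$ the box $\mathfrak{b}$ is rightmost in row $r'$, and the highest empty cell of column $c$ below $(r',c)$ in $K^{\text{new}}$ is exactly $(r,c)$, so a single (forward) Kohnert move on $\mathfrak{b}$ in $K^{\text{new}}$ recovers $K$.

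The heart of the proof is to realize $K^{\text{new}}$ itself as a Kohnert diagram of $D(a)$. I would fix any Kohnert sequence $D(a)=K_0 \to K_1 \to \cdots \to K_n = K$ and let $t_s$ be the last step at which $\mathfrak{b}$ was moved, sending it from its then-position $(\rho,c)$ down to $(r,c)$. Using the fact that $\mathfrak{b}$ cannot ascend after step $t_s$, together with the specific column-$c$ profile of $K$ above row $r$, I would analyze how column $c$ evolves through steps $t_s, t_s+1, \ldots, n$. A case-by-case argument (on whether $\rho \geq r'$ or $\rho < r'$) shows that the later moves affecting column $c$ can be commuted past the descent at step $t_s$, and that this descent may itself be replaced by a shorter one landing at $(r',c)$, with the now-final descent $\mathfrak{b}\colon (r',c)\to(r,c)$ appended at the end. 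The penultimate configuration of the resulting sequence is $K^{\text{new}}$, as desired.

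The principal obstacle is this rearrangement step, since Kohnert moves are not freely commutative: a descent in one row can change which box is rightmost in another row, and hence which subsequent moves are available. The key input enabling the commutations is precisely the rigidity supplied by the nearest-skyline hypothesis --- the skylines above row $r$, together with the tightly-packed column-$c$ profile near rows $r$, $r'$, and $\rho$ --- which constrains the possible interleavings of moves strongly enough that the needed commutations go through. Iterating over the successive upward moves of the nearest-skyline algorithm then yields the lemma.
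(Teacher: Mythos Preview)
Your strategy differs from the paper's, and the core step is not actually carried out. You aim to produce a forward Kohnert sequence $D(a)\to K^{\text{new}}$ by taking a fixed sequence $D(a)\to K$, locating the last descent of $\mathfrak{b}$, and commuting or shortening moves. But that commutation is the entire content of the lemma in your framing, and you only assert that ``a case-by-case argument \ldots shows'' it works while simultaneously conceding this is ``the principal obstacle.'' Kohnert moves do not commute freely: a descent in one column can change which box is rightmost in a given row and hence alter which later moves are legal. The structural rigidity you invoke constrains column $c$ near rows $r$ and $r'$ in the \emph{final} diagram $K$, but says nothing directly about the intermediate diagrams $K_{t_s},\ldots,K_{n-1}$, where the commutation must actually take place. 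For instance, if $\rho>r'$ then $(r',c)$ was occupied at time $t_s$ and must be vacated later; tracing where that occupant goes (it can only descend, yet $(r,c)$ is already held by $\mathfrak{b}$ and the cells between are occupied in $K$) and how its motion interacts with your proposed shortened descent is delicate and is not addressed. As written, this is a plan with the hard step missing rather than a proof.

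The paper avoids forward commutation entirely by working with \emph{reverse} Kohnert moves and a minimality argument. It first checks that each nearest-skyline step is itself a valid reverse Kohnert move (the raised box lands rightmost in its new row, and the vacated cell is the highest gap beneath it). It then argues that any reverse-Kohnert path from $K$ to a skyline must, at the level of diagrams, effect these same changes: every non-left-justified box must eventually rise, a reverse Kohnert move never alters the left-justification status of any other box, and the algorithm raises each such box by the least possible amount. Hence the diagrams produced by the algorithm are forced intermediate stages on the way to \emph{any} skyline, in particular $D(a)$, so $K^\uparrow$ is a Kohnert diagram of $D(a)$. This minimality viewpoint sidesteps any need to reorder a specific forward sequence, which is precisely the difficulty your approach runs into.
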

\begin{proof}
Since $K$ is a Kohnert diagram of $D(a)$, $D(a)$ can be obtained by a sequence of reverse Kohnert moves on $K$. Let $\mathfrak{b}$ be the box moved by an iteration of the ``nearest skyline'' algorithm of Definition~\ref{def:nearestskyline}. Since all boxes strictly higher than $\mathfrak{b}$ are by definition in left-justified rows, $\mathfrak{b}$ does not land left of another box in the same row. So Definition~\ref{def:nearestskyline} uses valid reverse Kohnert moves. We will show these reverse Kohnert moves on $K$ (or other sequences of reverse Kohnert moves resulting in the same diagram) are all necessary to obtain \emph{any} skyline diagram of which $K$ is a Kohnert diagram, hence are necessary to obtain $D(a)$. 

In order for a skyline diagram to be obtained, since a reverse Kohnert move can never cause a box to land the left of another box in the same row, any box of $K$ that has an empty space to its left in its row must at some stage move upwards. Moreover, a reverse Kohnert move on any box does not cause the left-justification status of other boxes in the diagram to change: those boxes that were in left-justified rows still are, and those boxes that had an empty space to their left still do. 

Now observe that when the algorithm raises a box $\mathfrak{b}$, it moves boxes within the column of $\mathfrak{b}$ by the minimal amount possible in order to raise $\mathfrak{b}$. In the case $\mathfrak{b}$ jumps over other boxes, it is of course possible the same diagram may be obtained by moving the boxes above $\mathfrak{b}$ first and then moving $\mathfrak{b}$ upwards by a smaller distance. But this gives rise to the same diagram: the entire interval of boxes weakly above $\mathfrak{b}$ moves up one space. Since Kohnert diagrams do not distinguish between individual boxes, this move is thus the minimal move on a Kohnert diagram that raises $\mathfrak{b}$. 

So the algorithm moves only the boxes that have to be moved upwards to obtain a skyline diagram, it moves boxes upwards by the minimal distance needed to achieve this, and the set of boxes of $K$ that need to be moved upwards is independent of the order in which these boxes are moved. So all these reverse Kohnert moves (or equivalent sequences of reverse Kohnert moves resulting in the same diagrams) are necessary in order to obtain $D(a)$, and thus $K^\uparrow$ is a Kohnert diagram of $D(a)$.
\end{proof}

We can reinterpret $\psi$ to act on Kohnert diagrams. Let $K$ be a Kohnert diagram of $D(a)$. We first decompose $K$ into {\bf threads} $\theta_1, \theta_2, \dots$. Let $\mathfrak{b}^1$ be the lowest box in column $1$ of $K$. Then for $k>1$, recursively define $\mathfrak{b}^k$ to be the highest box in column $k$ of $K$ that is weakly lower than $\mathfrak{b}^{k-1}$. Let $\theta_1 = \{ \mathfrak{b}^1, \mathfrak{b}^2, \dots \}$. After deleting $\theta_1$, repeat this process on the remainder of $K$ to obtain $\theta_2$, etc. Now, define $\psi(K)$ to be the skyline filling given by placing the row indices of the boxes in thread $\theta_k$ into row $k$ in weakly decreasing order.

\begin{example}\label{ex:threading}
The left of Figure~\ref{fig:threading} shows the Kohnert diagram $K$ of Figure~\ref{fig:keytabtoKohnert} with threads indicated both by labels in the boxes and also by coloring. On the right of Figure~\ref{fig:threading} is the corresponding filling $\psi(K) \in \ASSF(2,1,3,0,3)$. 
Observe that the filling $\psi(K)$ of Figure~\ref{fig:threading} coincides with the filling obtained in Example~\ref{ex:psi}.
\end{example}

\begin{figure}[h]
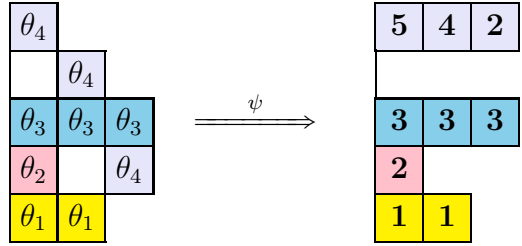

\begin{center}
\begin{displaymath}
 \begin{ytableau}
*(Lavender)  \theta_4 \\
 \blank  & *(Lavender) \theta_4 \\
  *(SkyBlue) \theta_3  &  *(SkyBlue) \theta_3 & *(SkyBlue) \theta_3 \\
 *(Pink) \theta_2 & \blank &  *(Lavender) \theta_4  \\
 *(yellow) \theta_1  & *(yellow)  \theta_1   \\ 
\end{ytableau} \quad \xRightarrow{\hspace*{.25in}\psi\hspace*{.25in}} 
 \begin{ytableau}
 \none & *(Lavender) \textbf{5} & *(Lavender) \textbf{4} & *(Lavender) \textbf{2} \\
\none \vline \\
\none & *(SkyBlue) \textbf{3} & *(SkyBlue) \textbf{3} & *(SkyBlue) \textbf{3} \\
\none & *(Pink) \textbf{2} \\
\none & *(yellow) \textbf{1}   & *(yellow) \textbf{1} \\
\end{ytableau}
\end{displaymath}
\end{center}
\caption{\label{fig:threading} The threading of the Kohnert diagram $K$ of Figure~\ref{fig:keytabtoKohnert} and the corresponding filling $\psi(K) \in \ASSF(2,1,3,0,3)$. Here, we have labeled the boxes of $K$ by the threads of which they are part. The color-coding of the boxes of $K$ is redundant with this labeling by threads, and the color-coding of the boxes of $\psi(K)$ matches the color-coding of $K$.}
\end{figure}

The following lemma justifies the mild abuse of notation in using the same symbol $\psi$ both for a map on skyline fillings and for a map on Kohnert diagrams.

\begin{lemma}\label{lem:psis_agree}
Let $T \in \KSSF(a)$ and let $K$ be the corresponding Kohnert diagram of $D(a)$. Then, $\psi(T) = \psi(K)$.
\end{lemma}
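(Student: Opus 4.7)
The plan is to observe that, under the bijection between $T \in \KSSF(a)$ and the Kohnert diagram $K$ of $D(a)$, the two constructions called $\psi$ are literally the same procedure described in two different languages. The main task is therefore to align the data carefully rather than prove anything substantive.

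First, I would recall the precise correspondence: viewing $T$ as a semistandard key tableau (by flipping upside-down, as in Example~\ref{ex:keytabtoKohnert}), the entry $v$ in row $i$, column $j$ of $T$ records the fact that the cell of $D(a)$ originally in row $i$, column $j$ has, after a sequence of Kohnert moves, become the box in row $v$, column $j$ of $K$. Since Kohnert moves preserve columns, this gives a bijection between the set of cells in column $j$ of $T$ and the set of boxes in column $j$ of $K$, under which the entry of a cell of $T$ equals the row index of the corresponding box of $K$. In particular, the multiset of entries in column $j$ of $T$ equals the multiset of row indices of boxes in column $j$ of $K$.

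Next, I would match up the two selection rules one iteration at a time. On the $T$-side, the first iteration chooses $i_1$ = least entry in column~$1$ of $T$, then recursively $i_k$ = greatest entry in column~$k$ that is $\le i_{k-1}$. On the $K$-side, the first thread chooses $\mathfrak{b}^1$ = lowest box in column~$1$ of $K$, then $\mathfrak{b}^k$ = highest box in column~$k$ weakly below $\mathfrak{b}^{k-1}$. Under the column-wise identification of Step~1, ``least entry'' corresponds to ``lowest box'' (i.e.\ least row index) and ``greatest entry weakly less than $i_{k-1}$'' corresponds to ``highest box weakly below $\mathfrak{b}^{k-1}$.'' Termination of the two processes also coincides, since ``no such entry exists'' on the $T$-side matches ``no such box exists'' on the $K$-side. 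Hence the cells of $T$ selected in the first pass are exactly those cells of $T$ corresponding to the boxes of $\theta_1 \subseteq K$, and the sequence $(i_1, i_2, \dots)$ equals the sequence of row indices of $(\mathfrak{b}^1, \mathfrak{b}^2, \dots)$.

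I would then iterate. The deletion steps also match: removing the cells $(i_1, i_2, \dots)$ from $T$ corresponds to removing the boxes of $\theta_1$ from $K$, leaving the data for the next iteration in bijection. By induction on the number of iterations, the two procedures select the same threads and record the same sequences of row indices. Finally, one has to check that both constructions place each thread in the same row of the output skyline filling: on the $T$-side, the selected sequence is placed in row $i_1$, while on the $K$-side it is placed in the row whose index is the row of $\mathfrak{b}^1$; these agree by the correspondence established at the start. Thus $\psi(T) = \psi(K)$. The only real subtlety is bookkeeping in the last step, namely interpreting ``row $k$'' in Example~\ref{ex:threading} as the row whose index is that of $\mathfrak{b}^1$ in thread $\theta_k$, so that skipped row indices produce empty rows as in Example~\ref{ex:psi}.
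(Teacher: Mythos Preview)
Your proposal is correct and takes essentially the same approach as the paper: both rest on the observation that $\psi(T)$ depends only on the column label sets of $T$, which coincide with the column-wise row-index sets of $K$, so the two thread-selection procedures are identical step by step. Your version is simply a more explicit unwinding of what the paper dismisses as ``straightforward,'' and you correctly flag the minor imprecision in the paper's phrase ``into row $k$'' for thread $\theta_k$ (which, as the example shows, should really mean the row indexed by the row of $\mathfrak{b}^1$ in $\theta_k$).
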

\begin{proof}
Observe that the definition of $\psi(T)$ only depends on the sets of labels appearing in each column of $T$. This set of labels coincides with the set of row indices of the boxes in the corresponding column of $K$. It is then straightforward from unwinding the definitions that $\psi(T) = \psi(K)$, as desired.
\end{proof}

\begin{lemma}\label{lem:threading}
The threading of a Kohnert diagram $K$ is preserved under the ``nearest skyline'' algorithm of Definition~\ref{def:nearestskyline}; that is, whenever the algorithm moves a box upwards, the set of boxes in each thread and the location of the leftmost box in each thread both remain unchanged.
\end{lemma}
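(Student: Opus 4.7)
The plan is to prove that when the nearest skyline algorithm moves $\mathfrak{b}$ from $(c,r)$ up to $(c,r'')$, no thread changes its set of boxes (identifying $\mathfrak{b}$ across the two diagrams) and each thread's leftmost box—which sits in column $1$—is fixed since $c \geq 2$.

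The first step is to extract three consequences of $\mathfrak{b}$ being the first reading-order box with an empty left neighbor: every row strictly above $r$ is left-justified; since $r''$ is the first empty row above $r$ in column $c$, each of the rows $r+1, \ldots, r''-1$ contains column $c$ and so is left-justified with length at least $c$; consequently, for each such $q$, the entire row $(1,q), (2,q), \ldots, (c,q)$ lies in $K$. From this I would establish a \emph{diagonal principle}: whenever a row $q > r$ is left-justified with length at least some $m$, the thread whose column-$1$ box is $(1,q)$ consumes the cells $(1,q), (2,q), \ldots, (m,q)$ along that row. This follows directly from the threading algorithm, since every earlier thread has its column-$1$ box at a row strictly below $q$ and, by induction on column index, all of its boxes lie strictly below row $q$, leaving the diagonal unobstructed.

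Armed with this, I would induct on the thread index $j$ to show that the $j$-th thread of $K'$ equals the $j$-th thread of $K$, with $\mathfrak{b}$'s position updated if this thread is the one containing $\mathfrak{b}$. Columns $1,\ldots,c-1$ agree in $K$ and $K'$, so only column-$c$ and later picks can differ. Let $k^*$ denote the index of the thread containing $\mathfrak{b}$. For $j<k^*$, the diagonal principle handles both subcases: if the column-$(c-1)$ row of $\theta_j$ lies below $r''$, then $(c,r'')$ is ineligible for $\theta_j$; if that row is at least $r''$, the diagonal principle forces $\theta_j$'s column-$c$ pick to land at the same row in both diagrams. For $j=k^*$, I would first show that the column-$(c-1)$ row of $\theta_{k^*}$ is at least $r''$: any smaller value is either $r$ (contradicting the emptiness of $(c-1,r)$) or sits in $(r, r'')$, in which case the diagonal principle would force $\theta_{k^*}$ to land at that row in column $c$ rather than at $\mathfrak{b}$. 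The diagonal principle further identifies every column-$c$ row in $(r, r'')$ as consumed by an earlier thread, so $r''$ is the unique highest eligible-unused option for $\theta_{k^*}$ in $K'$. For $j > k^*$, a set-difference computation—swapping $r$ for $r''$ in both the available and the used column-$c$ rows—shows the unused sets match, so the column-$c$ pick is unchanged.

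The main obstacle will be extending this equality past column $c$ for $\theta_{k^*}$, since its greedy upper bound changes from $r$ in $K$ to $r''$ in $K'$. I would address this by iterating the diagonal principle at each later column: for any row $q \in (r, r'']$ and column index $k > c$, the cell $(k, q)$ is either absent from $K$ (as in row $r''$, whose length in $K$ is at most $c-1$) or lies in a left-justified row long enough that the corresponding diagonal thread consumes it. Hence no row in $(r, r'']$ ever contributes to $\theta_{k^*}$'s eligible-unused set beyond column $c$, and enlarging the bound from $r$ to $r''$ cannot change any subsequent pick. This completes the induction and establishes thread preservation.
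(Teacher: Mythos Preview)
Your argument is correct. Both you and the paper rely on the same core observation---what you call the \emph{diagonal principle}, and what the paper phrases as ``each box is threaded with all the boxes to its left in its row'' for the left-justified rows strictly above $\mathfrak{b}$. The difference is organizational. The paper's proof is local: it fixes the thread $\theta_i$ containing $\mathfrak{b}^c$, splits into the two cases ``$\mathfrak{b}^c$ moves up one step'' versus ``$\mathfrak{b}^c$ jumps over boxes,'' and in each case verifies only that $\mathfrak{b}^c$ remains linked to its neighbours $\mathfrak{b}^{c-1}$ and $\mathfrak{b}^{c+1}$; the preservation of the other threads is left implicit. Your proof is global: you induct on the thread index $j$, explicitly verifying for $j<k^*$, $j=k^*$, and $j>k^*$ that the column-$c$ pick (and then all later picks) is unchanged, using the diagonal principle to show the rows in $(r,r'')$ are already consumed by earlier threads.

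Your approach buys rigor---the paper's proof leaves the reader to check that the greedy threading of \emph{all} threads, not just $\theta_i$, is undisturbed, whereas you handle this directly via the set-difference argument for $j>k^*$. The paper's approach buys brevity, and its case split (one-step move versus jump) makes the geometry of the ``nearest skyline'' step more visible. One small comment: in your case $j<k^*$ with $q_j\geq r''$, the phrase ``the diagonal principle forces $\theta_j$'s column-$c$ pick to land at the same row'' is slightly imprecise when row $q_j$ has length exactly $c-1$; what actually happens there is that $\theta_j$'s pick must lie strictly above $r''$ (else $\theta_j$ would pick $(c,r)$, contradicting $j<k^*$), and so the swap $r\leftrightarrow r''$ below it is irrelevant. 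This is easily filled in from your setup.
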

\begin{proof}
Any box in the first column of $K$ has no empty space to its left and thus is never moved by the  ``nearest skyline'' algorithm. Hence, the location of the leftmost box in each thread does not change during construction of $K^\uparrow$.

To see that threads retain the same set of boxes, it is enough by induction to consider a single application of the ``box raising'' operation. Suppose this operation takes $K$ to $K^+$ by acting on the box $\mathfrak{b}^c$ in column $c$. Suppose $\mathfrak{b}^c$ is in the thread $\theta_i = \{ \mathfrak{b}^1, \mathfrak{b}^2, \dots \}$ in $K$, where $\mathfrak{b}^k$ is in column $k$. Certainly, $c > 1$, so $\mathfrak{b}^{c-1}$ exists. 

First, consider the case that $\mathfrak{b}^c$ moves up one space, i.e., $K$ has no box immediately above $\mathfrak{b}^c$ in its column. By definition, $K$ has no box immediately to the left of $\mathfrak{b}^c$ in its row, so $\mathfrak{b}^{c-1}$ is strictly above $\mathfrak{b}^c$ in $K$. Thus, $\mathfrak{b}^c$ still lies in the same thread as $\mathfrak{b}^{c-1}$ in $K^+$. By definition, $\mathfrak{b}^c$ is not left of any box in its row in $K^+$, so $K^+$ has $\mathfrak{b}^c$ in the same thread as $\mathfrak{b}^{c+1}$, if such a box exists. Thus, in this case the threading of $K$ coincides with the threading of $K^+$.

Now, suppose $\mathfrak{b}^c$ ``jumps'' over at least one box, i.e., $K$ has a box immediately above $\mathfrak{b}^c$ in its column. Since $K$ has all boxes above $\mathfrak{b}^c$ in left-justified rows, $\mathfrak{b}^{c-1}$ is necessarily the rightmost box of the lowest row above $\mathfrak{b}^c$ that ends in column $c-1$. Since the rows that $\mathfrak{b}^c$ jumps over have length at least $c$, $\mathfrak{b}^c$ remains threaded with $\mathfrak{b}^{c-1}$ in $K^+$. By definition, $\mathfrak{b}^c$ is not left of any box in its row in $K^+$. In the rows that $\mathfrak{b}^c$ jumps over, observe that both $K$ and $K^+$ have each box threaded with all the boxes to its left in its row. In particular, $K^+$ does not thread $\mathfrak{b}^c$ with with a box in column $c+1$ from a row that has been jumped over. So $\mathfrak{b}^c$ remains threaded with $\mathfrak{b}^{c+1}$ in $K^+$, if such a box exists. Thus, the threadings of $K$ and $K^+$ coincide.
\end{proof}

\begin{lemma}\label{lem:shapes_the_same}
Let $K$ be a Kohnert diagram. Then, the shape of $\psi(K)$ is the nearest skyline diagram $K^\uparrow$.
\end{lemma}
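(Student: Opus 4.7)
The plan is to show, for each row index $r$, that the length of row $r$ in $K^\uparrow$ equals the size of the thread of $K$ whose leftmost box lies at $(r,1)$.  Since by construction $\psi(K)$ places one entry per box of that thread into row $r$, and both $\psi(K)$ and $K^\uparrow$ are skyline diagrams (so each is determined by its sequence of row lengths), matching these sizes will identify the two shapes.  Rows of $K^\uparrow$ with no column-$1$ box are empty by left-justification and correspond to no thread, so the empty rows agree automatically.

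First, I will analyze the threading of an arbitrary skyline diagram intrinsically.  For any skyline diagram $S$, I claim that each thread of $S$ is exactly a single nonempty row of $S$.  Indeed, let $r_1$ be the lowest nonempty row of $S$; since $S$ has no boxes strictly below row $r_1$, the first thread begins at $(r_1,1)$ and can continue only by picking up further boxes weakly below row $r_1$, of which the sole candidates are the boxes in row $r_1$ itself.  Left-justification of $S$ then forces this thread to capture row $r_1$ in its entirety, terminating as soon as the row runs out.  Deleting this thread leaves a skyline whose lowest nonempty row is the next nonempty row of $S$, so an easy induction completes the claim.  Applied to $S = K^\uparrow$, this identifies the length of each nonempty row $r$ of $K^\uparrow$ with the size of the thread of $K^\uparrow$ that starts at $(r,1)$.

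Next, I will transport these sizes back to $K$.  By Lemma~\ref{lem:threading}, every individual box-raising step of the nearest skyline algorithm preserves both the partition of boxes into threads and the column-$1$ starting box of each thread.  Iterating through all the steps produces a canonical correspondence between the threads of $K$ and those of $K^\uparrow$ in which corresponding threads have identical sets of boxes (and in particular the same size) and the same starting position.  Combined with the previous paragraph, this shows that the length of row $r$ in $K^\uparrow$ equals the size of the thread of $K$ starting at $(r,1)$, which by the Kohnert-diagram definition of $\psi$ is precisely the length of row $r$ in the shape of $\psi(K)$.

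The main obstacle is the first step: verifying that on a skyline diagram the recursive thread construction genuinely exhausts each nonempty row before moving on.  This depends crucially on left-justification together with the ``weakly lower'' rule in the thread definition, without which a thread could in principle skip over boxes of its own starting row.  Once this is in hand, Lemma~\ref{lem:threading} supplies the remaining bookkeeping and the lemma follows.
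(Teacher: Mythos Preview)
Your proof is correct and follows essentially the same approach as the paper: use Lemma~\ref{lem:threading} to identify the threading of $K$ with that of $K^\uparrow$, and then observe that the threads of a skyline diagram are simply its rows, so that the shape of $\psi(K^\uparrow)$ is $K^\uparrow$ itself. The paper states the latter fact as ``clear from the definitions,'' while you spell out the inductive argument explicitly; otherwise the two proofs are the same.
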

\begin{proof}
By Lemma~\ref{lem:threading}, the threading of $K$ is the same as the threading of $K^\uparrow$. Hence, the shape of $\psi(K)$ equals the shape of $\psi(K^\uparrow)$. Since $K^\uparrow$ is a skyline diagram, it is clear from the definitions that the threads of $K^\uparrow$ are just its rows. Hence, the shape of $\psi(K^\uparrow)$ is the skyline diagram $K^\uparrow$ itself. Thus, the shape of $\psi(K)$ is $K^\uparrow$.
\end{proof}

\begin{lemma}\label{lem:column_sets_distinct}
Let $T, U \in \KSSF(a)$ be distinct. Then, there is a column in which $T$ and $U$ do not have the same set of labels.
\end{lemma}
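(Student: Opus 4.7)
The plan is to deduce this from the bijection between $\KSSF(a)$ and Kohnert diagrams of $D(a)$ discussed in the paragraph preceding Example~\ref{ex:keytabtoKohnert}. Under this bijection, a filling $T \in \KSSF(a)$ corresponds to the Kohnert diagram $K_T$ whose box in column $c$ at row $r$ is recorded by an entry $r$ in column $c$ of $T$. In particular, the set of labels appearing in column $c$ of $T$ coincides precisely with the set of row indices at which $K_T$ has a box in column $c$, an observation made explicitly in the proof of Lemma~\ref{lem:psis_agree}. Consequently, the column sets of $T$ determine (and are determined by) the diagram $K_T$.

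Now suppose $T, U \in \KSSF(a)$ have identical column sets of labels. Then by the above observation $K_T$ and $K_U$ agree column by column as sets of row indices of boxes, hence $K_T = K_U$ as Kohnert diagrams. Since the correspondence $T \mapsto K_T$ is a bijection between $\KSSF(a)$ and the Kohnert diagrams of $D(a)$, we conclude $T = U$. Contrapositively, if $T \neq U$, then there must be some column in which their label sets differ, which is the desired statement. The proof is essentially a matter of invoking the established bijection; the only minor obstacle is a careful check that the bijection genuinely factors through column sets (rather than requiring the full filling data), and this is immediate from the way the entry in each box of a semistandard key tableau records only the row of the corresponding Kohnert box.
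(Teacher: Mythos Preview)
Your proof is correct and follows essentially the same approach as the paper's own proof: both arguments invoke the bijection $T \mapsto K_T$ between $\KSSF(a)$ and Kohnert diagrams of $D(a)$, together with the observation that the column-$c$ label set of $T$ coincides with the set of row indices of boxes in column $c$ of $K_T$. The paper phrases this directly (from $T \neq U$ one has $K_T \neq K_U$, hence some column differs), while you phrase it contrapositively, but the content is identical.
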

\begin{proof}
Let $K_T$ and $K_U$ be the Kohnert diagrams corresponding to $T$ and $U$, respectively. Then $T$ and $U$ have the same labels in column $c$ exactly if $K_T$ and $K_U$ have boxes in the same positions in column $c$.  Since $K_T \neq K_U$, there is some column $\hat{c}$ in which their boxes are not in identical positions. Hence, the set of labels in column $\hat{c}$ of $T$ differs from the set of labels in column $\hat{c}$ of $U$.
\end{proof}

\begin{theorem}
The map
\[ \psi :  \KSSF(a) \rightarrow \coprod_{b \in \lswap(a)} \ASSF(b)\]
is well-defined, and is a column-set-preserving (and thus weight-preserving) bijection.
\end{theorem}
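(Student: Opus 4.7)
The plan is to establish well-definedness, column-set preservation, injectivity, and surjectivity in turn, leveraging the substantial machinery already developed. Column-set preservation will be essentially automatic from the threading construction; well-definedness will be a chain-of-lemmas argument passing through Kohnert diagrams; injectivity will come from Lemma~\ref{lem:column_sets_distinct}; and I will settle surjectivity via a cardinality argument using that both sides generate the Demazure character $\key_a$, rather than writing down an explicit inverse.

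For well-definedness, fix $T \in \KSSF(a)$ with corresponding Kohnert diagram $K_T$. The cited \cite[Lemma~5.2, Lemma~5.3]{Searles} already supplies that $\psi(T) \in \ASSF(b)$ for some weak composition $b$. To upgrade this to $b \in \lswap(a)$, I will first invoke Lemma~\ref{lem:psis_agree} to identify $\psi(T)$ with $\psi(K_T)$, then Lemma~\ref{lem:shapes_the_same} to conclude that the shape of $\psi(K_T)$ is the nearest skyline diagram $K_T^\uparrow$. Lemma~\ref{lem:nearestisKohnert} asserts that $K_T^\uparrow$ is itself a Kohnert diagram of $D(a)$, whence Lemma~\ref{lem:lswapKohnert} gives $K_T^\uparrow = D(b)$ for some $b \in \lswap(a)$, as required.

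Column-set preservation is essentially immediate from the definition of $\psi$ via threading: each box in column $c$ of $K_T$ gets absorbed into some thread $\theta_k$, and its row index becomes a label placed in column $c$ (not column $k$) of $\psi(K_T)$, lying in row $k$. Hence the set of labels appearing in column $c$ of $\psi(T)$ is precisely the set of row indices of boxes in column $c$ of $K_T$, which is exactly the set of labels in column $c$ of $T$. Weight preservation is then immediate. Injectivity follows in two lines from Lemma~\ref{lem:column_sets_distinct}: if $T \neq U$ in $\KSSF(a)$, there is a column in which their label sets differ, so $\psi(T)$ and $\psi(U)$ differ in that same column.

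For surjectivity, I will argue by cardinality rather than producing an explicit inverse, which I expect to be the main (though still mild) obstacle. By \cite{HLMvW11:LRrule}, $\sum_{T \in \KSSF(a)} \x^{\wt(T)} = \key_a$, while by \cite{Mason:atom} combined with Lemma~\ref{lem:dominic's_lemma} we also have $\key_a = \sum_{b \in \lswap(a)} \sum_{F \in \ASSF(b)} \x^{\wt(F)}$. Since these polynomials agree and all coefficients are finite, domain and codomain of $\psi$ have the same (finite) cardinality in each weight class. Combined with the weight-preservation and injectivity established above, $\psi$ must be surjective on every weight class, and therefore surjective overall, completing the proof.
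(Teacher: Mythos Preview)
Your proposal is correct and follows essentially the same approach as the paper's own proof: the same chain of lemmas (\ref{lem:psis_agree}, \ref{lem:shapes_the_same}, \ref{lem:nearestisKohnert}, \ref{lem:lswapKohnert}) for well-definedness, Lemma~\ref{lem:column_sets_distinct} for injectivity, and the cardinality argument via both sides generating $\key_a$ for surjectivity. The only difference is that you spell out column-set preservation and invoke Lemma~\ref{lem:dominic's_lemma} explicitly, whereas the paper leaves these implicit.
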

\begin{proof}
Let $T\in \KSSF(a)$. We know from \cite{Searles} that $\psi(T) \in \ASSF(b)$ for some $b$. Let $K$ be the Kohnert diagram corresponding to $T$ and suppose the nearest skyline diagram $K^\uparrow$ has shape $d$. By definition, $K$ is a Kohnert diagram of $D(a)$. Hence by Lemma~\ref{lem:nearestisKohnert}, we have that $D(d)$ is a Kohnert diagram of $D(a)$.  Thus by Lemma~\ref{lem:lswapKohnert}, $d \in \lswap(a)$. By Lemma~\ref{lem:shapes_the_same}, $\psi(K)$ has shape $d$. Thus, by Lemma~\ref{lem:psis_agree}, $\psi(T)$ has shape $d$, and so  $\psi(T)\in \ASSF(d) \subseteq \coprod_{b \in \lswap(a)} \ASSF(b)$.

By definition, $\psi$ preserves column sets. Since no two elements of $\KSSF(a)$ have identical column sets by Lemma~\ref{lem:column_sets_distinct}, this implies $\psi$ is injective. Since $\KSSF(a)$ and $\coprod_{b \in \lswap(a)} \ASSF(b)$ both generate the Demazure character $\key_a$  \cite{HLMvW11:LRrule}, \cite{Mason:atom}, they are equinumerous (and finite). Hence $\psi$ is a bijection.
\end{proof}

\begin{lemma}
The bijection $\psi$ extends to a column-set-preserving (and thus weight-preserving) bijection
\[ \overline{\psi} :  \LPSSF(a) \rightarrow \coprod_{b \in \lswap(a)} \LASSF(b), \]
as follows:
For $\overline{T} \in \LPSSF(a)$, let $T$ be the filling obtained by deleting the free entries of $\overline{T}$. Then, $\overline{\psi}(\overline{T})$ is given by placing each free entry in $\overline{T}$ with the smallest possible anchor in the corresponding column of $\psi(T)$, subject to the decreasingness condition (S.2).
\end{lemma}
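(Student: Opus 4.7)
My plan is to exploit the fact that the already-constructed $\psi$ preserves column anchor sets, so the task reduces to verifying that freely redistributing the free entries of $\overline T$ into the matching columns of $\psi(T)$ yields a well-defined element of $\coprod_{b\in\lswap(a)}\LASSF(b)$ and that this redistribution is invertible. Explicitly, given $\overline T\in\LPSSF(a)$, let $T$ be the skeleton obtained by deleting the free entries of $\overline T$, so $T\in\KSSF(a)$; apply $\psi$ to obtain $\psi(T)\in\ASSF(b)$ for some $b\in\lswap(a)$; then, for each column $c$, insert the same multiset of free entries that appeared in column $c$ of $\overline T$ into column $c$ of $\psi(T)$, placing each free entry in the cell of the smallest anchor of that column which does not violate (S.2). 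Call the resulting filling $\overline\psi(\overline T)$.

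The first step is to check $\overline\psi(\overline T)\in\LASSF(b)$ by verifying (S.1)--(S.5). Conditions (S.3) and (S.5) involve only the anchors and are already guaranteed because $\psi(T)\in\ASSF(b)$; condition (S.4) holds by construction. For (S.1), the multiset of entries in column $c$ of $\overline\psi(\overline T)$ is the union of the anchor set of column $c$ in $\psi(T)$ and the free-entry multiset of column $c$ in $\overline T$; since $\psi$ preserves column anchor sets, this is identical to the multiset of entries in column $c$ of $\overline T$, which satisfied (S.1). The main obstacle is (S.2): I must confirm that placing free entries greedily into $\psi(T)$ always succeeds. For this, fix a free entry $f$ of $\overline T$ sitting in column $c$; in $\overline T$ its anchor was some $\alpha\ge f$ in column $c$, and by column-set preservation that same value $\alpha$ appears as an anchor in column $c$ of $\psi(T)$, so the cell containing $\alpha$ in $\psi(T)$ is a legal host for $f$ with respect to the anchor bound. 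Combined with (S.2) already holding for the anchors of $\psi(T)$, it follows that the greedy placement has a valid choice at every step, giving (S.2) for $\overline\psi(\overline T)$.

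Next I establish bijectivity by constructing the inverse in the mirror fashion: given $F\in\LASSF(b)$ with $b\in\lswap(a)$, let $F'$ be the anchor-only skeleton, apply $\psi^{-1}$ to get $T=\psi^{-1}(F')\in\KSSF(a)$, and insert the free entries of column $c$ of $F$ back into column $c$ of $T$ using the same greedy (S.4) rule. The same column-set-preservation argument, applied to $\psi^{-1}$, shows the result lies in $\LPSSF(a)$. To see that the two procedures are mutually inverse, observe that in any element of $\LASSF$ or $\LPSSF$, condition (S.4) forces each free entry to occupy a unique cell once its column and numerical value are specified; hence the placement is determined by the column multisets alone, which are preserved throughout. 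Injectivity therefore reduces to injectivity of $\psi$, and surjectivity reduces to surjectivity of $\psi$, both of which are already proved.

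Finally, column-set preservation of $\overline\psi$ is immediate: free entries stay in their columns and anchor sets are preserved column-by-column by $\psi$. Weight preservation is an automatic consequence. The one step that requires genuine care is the (S.2) verification above, since the \emph{row-wise} arrangement of anchors in $\psi(T)$ generally differs from that in $T$; the resolution is that (S.4) is a column-local condition with only a mild row constraint, and the column-set-preservation of $\psi$ is exactly the data needed to transfer a valid host from $T$ to $\psi(T)$.
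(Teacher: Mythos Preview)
Your argument has a genuine gap in the verification of (S.2). You claim that if a free entry $f$ sat in the cell with anchor $\alpha$ in column $c$ of $\overline T$, then the cell with anchor $\alpha$ in column $c$ of $\psi(T)$ is again a legal host for $f$. Knowing $\alpha>f$ is not enough: (S.2) also requires that the anchor immediately to the \emph{right} of $\alpha$ be $\le f$, and $\psi$ does not preserve right-neighbours. Concretely, in the paper's own Example~\ref{ex:psi}, the anchor $3$ in column~$2$ of $T$ has right-neighbour $2$, but in $\psi(T)$ its right-neighbour is $3$; hence a free entry $2$ legally hosted by $\alpha=3$ in $\overline T$ cannot sit with $\alpha=3$ in $\psi(T)$ (it must instead go with anchor $4$). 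Your sentence ``combined with (S.2) already holding for the anchors of $\psi(T)$'' does not repair this: that hypothesis only gives right-neighbour $\le\alpha$, not $\le f$.

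What is actually needed is a counting argument that avoids tracking a specific host. Suppose a free entry $k$ in column $c$ of $\overline T$ has no legal host in $\psi(T)$. Then every anchor $m>k$ in column $c$ of $\psi(T)$ has its right-neighbour $>k$; since these right-neighbours are distinct, column $c+1$ of $\psi(T)$ has at least as many anchors $>k$ as column $c$. Column-set preservation transfers this inequality to $T$. But (S.2) for $T$ forces every anchor $>k$ in column $c+1$ to sit right of an anchor $>k$ in column $c$, so the inequality is an equality and every anchor $>k$ in column $c$ of $T$ also has right-neighbour $>k$---contradicting that $k$ had a legal host in $\overline T$. This, together with the symmetric argument for $\psi^{-1}$, is the missing step; the rest of your outline (reduction to skeletons, (S.4) forcing uniqueness, column-set preservation giving the inverse) is fine.
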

\begin{proof}
Let $T \in \KSSF(a)$ and let $S = \psi(T) \in \ASSF(b)$.
The only thing that needs to be checked is that, for any valid assignment of free entries to columns of $T$, there is a corresponding valid assignment of the same free entries to the same columns of $S$, and vice versa.

Suppose for a contradiction that this is false for some $\overline{T} \in \LPSSF(a)$, whose anchor entries form the filling $T$. Let $k$ be the greatest free entry of column $c$ of $\overline{T}$ that cannot be added as a free entry in column $c$ of $S$ without violating the decreasingness condition (S.2).  Then, for every entry $m$ greater than $k$ in column $c$ of $S$, the entry immediately right in column $c+1$ of $S$ is strictly greater than $k$.  
However, since $\psi$ preserves column sets, the entries of each column of $S$ are a permutation of the entries of the corresponding column of $T$. Thus, $T$ also has at least as many entries that are strictly greater than $k$ in column $c+1$  as it has entries that are that are strictly greater than $k$ in column $c$.  
Hence by (S.2) for $T$, every entry in column $c+1$ of $T$ that is strictly greater than $k$ must be immediately right of an entry that is strictly greater than $k$ in column $c$. Thus, $k$ cannot be added as a free entry in column $c$ of $T$, contradicting the existence of $\overline{T}$. The argument for the other direction is identical.
\end{proof}

This completes the proof of Theorem~\ref{thm:lDecomp}. \qed

\subsection{The quasiLascoux expansion of a Lascoux polynomial}
The Lascoux polynomials expand positively in the basis of quasiLascoux polynomials. 
Following \cite{Assaf.Searles:2}, define $\Qlswap(a)$ to be all $b\in \lswap(a)$ such that if $c\in \lswap(a)$ and $b^+ = c^+$, then $c \ge b$. 

\begin{example}[cf.~Example~\ref{ex:lswap}]
For the weak compositions $a=(0,1,2)$ and $b = (0,3,1)$, we have
\[
 \Qlswap(a) = \{(0,1,2), (0,2,1)\}
\]
and
\[ \pushQED{\qed}  \Qlswap(b) = \{ (0,3,1), (1,3,0) \} . \qedhere \popQED \] \let\qed\relax 
\end{example}

\begin{theorem}\label{LtoQL}
For any weak composition $a$, we have
\[\lascoux_a^{(\beta)} = \sum_{b\in \Qlswap(a)} \qlascoux_b^{(\beta)}.\]
In particular, every Lascoux polynomial $\lascoux_a^{(\beta)}$ is a positive sum of quasiLascoux polynomials.
\end{theorem}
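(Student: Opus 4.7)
The plan is to combine Theorem~\ref{thm:lDecomp} with Definition~\ref{def:QL} by regrouping the Lascoux atom expansion of $\lascoux_a^{(\beta)}$ according to positive parts. Theorem~\ref{thm:lDecomp} gives $\lascoux_a^{(\beta)} = \sum_{b \in \lswap(a)} \lascouxatom_b^{(\beta)}$, while Definition~\ref{def:QL} reads $\qlascoux_{b_0}^{(\beta)} = \sum_{b \geq b_0,\, b^+ = b_0^+} \lascouxatom_b^{(\beta)}$. The theorem will follow immediately once I verify the disjoint-union identity
\[\lswap(a) = \coprod_{b_0 \in \Qlswap(a)} \{b : b \geq b_0 \text{ and } b^+ = b_0^+\},\]
which splits the Lascoux atom sum into the desired quasiLascoux pieces.

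Two facts are needed. First, $\Qlswap(a)$ must pick out exactly one representative from each positive-part class appearing in $\lswap(a)$. Distinctness of two $\Qlswap(a)$ elements sharing a positive part is automatic, since the minimality conditions force mutual dominance and hence equality. Existence of a representative in each class amounts to showing that every positive-part class in $\lswap(a)$ has a unique dominance-minimum; this is a purely combinatorial property of $\lswap(a)$, established as part of the analysis underlying the cohomological expansion of Demazure characters into quasikeys \cite[Theorem~3.7]{Assaf.Searles:2}, and it carries over unchanged to the present setting. Second, each such positive-part class must coincide with the principal dominance-filter above its minimum. The inclusion $\subseteq$ here is built into the definition of $\Qlswap(a)$; the reverse inclusion reduces, via transitivity of left-swaps and $b_0 \in \lswap(a)$, to the sliding lemma: if $b \geq b_0$ and $b^+ = b_0^+$, then $b \in \lswap(b_0)$.

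I regard this sliding lemma as the main substantive step, and I would prove it by constructing an explicit sequence of left-swaps carrying $b_0$ to $b$. Write the common positive part as $p_1, p_2, \dots, p_\ell$, and let $q_j^{(0)}$, $q_j$ denote the positions of $p_j$ in $b_0$, $b$, respectively. The hypotheses $b \geq b_0$ and $b^+ = b_0^+$ force $q_j \leq q_j^{(0)}$ for every $j$. Processing $j = 1, 2, \dots, \ell$ in left-to-right order, slide $p_j$ leftward from $q_j^{(0)}$ to $q_j$ one position at a time via swaps with adjacent zero entries; each is a valid left-swap since $0 \leq p_j$. The main technical obstacle is confirming that the sliding interval $\{q_j, q_j + 1, \dots, q_j^{(0)} - 1\}$ contains only zeros at the moment we begin processing $p_j$, but this is a direct bookkeeping check: the already-processed entries $p_1, \dots, p_{j-1}$ now occupy positions $q_1 < \cdots < q_{j-1} < q_j$, strictly below the sliding interval, while the as-yet unmoved entries $p_{j+1}, \dots, p_\ell$ remain at their original positions $q_{j+1}^{(0)}, \dots, q_\ell^{(0)} > q_j^{(0)}$, strictly above it. Hence the sliding interval is populated solely by zeros, each constituent slide executes without obstruction, and we arrive at $b$.
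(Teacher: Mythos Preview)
Your proof is correct and follows essentially the same approach as the paper: both start from Theorem~\ref{thm:lDecomp} (together with Lemma~\ref{lem:dominic's_lemma}) to obtain $\lascoux_a^{(\beta)} = \sum_{c\in\lswap(a)}\lascouxatom_c^{(\beta)}$, then regroup the atoms by positive part using Definition~\ref{def:QL} and the purely set-theoretic (hence $\beta$-independent) partition of $\lswap(a)$ into dominance filters above the elements of $\Qlswap(a)$. The only difference is emphasis: the paper dismisses your sliding lemma as ``clear'' and attributes the existence of a unique dominance-minimum in each positive-part class to ``the definitions of $\lswap$ and $\Qlswap$,'' whereas you spell out the sliding argument explicitly and cite \cite{Assaf.Searles:2} for the minimum; both treatments are valid.
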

\begin{proof}
Suppose that $b\in \Qlswap(a)$ and that $c$ is another weak composition with $c\ge b$ and $c^+ = b^+$. Then clearly $c\in \lswap(a)$. By the definitions of $\lswap$ and $\Qlswap$, every $c\in \lswap(a)$ is either in $\Qlswap(a)$ or else dominates some $b\in \Qlswap(a)$ with $c^+ = b^+$. This establishes the second equality in the following:
\[\lascoux_a^{(\beta)} \,\,\,\,\,\, = \!\! \sum_{c\in \lswap(a)} \lascouxatom_c^{(\beta)} \,\,\,\,\,\, = \!\! \sum_{b\in \Qlswap(a)} \! \sum_{\substack{c\ge b \\ c^+ = b^+}} \!\! \lascouxatom_c^{(\beta)} \,\,\,\,\,\, = \!\! \sum_{b\in \Qlswap(a)} \qlascoux_b^{(\beta)},\]
where the first equality is by combining Theorem~\ref{thm:lDecomp} and Lemma~\ref{lem:dominic's_lemma}, and the third equality is by Definition~\ref{def:QL}.
\end{proof}

\begin{remark}
The $\beta=0$ specialization of Theorem~\ref{LtoQL} recovers the expansion of Demazure characters in the quasikey basis \cite[Theorem~3.7]{Assaf.Searles:2}.
\end{remark}

\section*{Acknowledgements}
OP was partially supported by a Mathematical Sciences Postdoctoral Research Fellowship (\#1703696) from the National Science Foundation. OP is grateful for educational conversations with Allen Knutson, Emily Sergel, and Ardea Thurston-Shaine.  CM was partially supported by a GAANN Fellowship from the Department of Mathematics, University of Illinois at Urbana-Champaign.

%
%

\bibliographystyle{amsalpha} 
\bibliography{quasilascoux}

\end{document}